\begin{document}

\newtheorem{theorem}[subsection]{Theorem}
\newtheorem{proposition}[subsection]{Proposition}
\newtheorem{lemma}[subsection]{Lemma}
\newtheorem{corollary}[subsection]{Corollary}
\newtheorem{conjecture}[subsection]{Conjecture}
\newtheorem{prop}[subsection]{Proposition}
\numberwithin{equation}{section}
\newcommand{\mr}{\ensuremath{\mathbb R}}
\newcommand{\mc}{\ensuremath{\mathbb C}}
\newcommand{\dif}{\mathrm{d}}
\newcommand{\intz}{\mathbb{Z}}
\newcommand{\ratq}{\mathbb{Q}}
\newcommand{\natn}{\mathbb{N}}
\newcommand{\comc}{\mathbb{C}}
\newcommand{\rear}{\mathbb{R}}
\newcommand{\prip}{\mathbb{P}}
\newcommand{\uph}{\mathbb{H}}
\newcommand{\fief}{\mathbb{F}}
\newcommand{\majorarc}{\mathfrak{M}}
\newcommand{\minorarc}{\mathfrak{m}}
\newcommand{\sings}{\mathfrak{S}}
\newcommand{\fA}{\ensuremath{\mathfrak A}}
\newcommand{\mn}{\ensuremath{\mathbb N}}
\newcommand{\mq}{\ensuremath{\mathbb Q}}
\newcommand{\half}{\tfrac{1}{2}}
\newcommand{\f}{f\times \chi}
\newcommand{\summ}{\mathop{{\sum}^{\star}}}
\newcommand{\chiq}{\chi \bmod q}
\newcommand{\chidb}{\chi \bmod db}
\newcommand{\chid}{\chi \bmod d}
\newcommand{\sym}{\text{sym}^2}
\newcommand{\hhalf}{\tfrac{1}{2}}
\newcommand{\sumstar}{\sideset{}{^*}\sum}
\newcommand{\sumprime}{\sideset{}{'}\sum}
\newcommand{\sumprimeprime}{\sideset{}{''}\sum}
\newcommand{\sumflat}{\sideset{}{^\flat}\sum}
\newcommand{\shortmod}{\ensuremath{\negthickspace \negthickspace \negthickspace \pmod}}
\newcommand{\V}{V\left(\frac{nm}{q^2}\right)}
\newcommand{\sumi}{\mathop{{\sum}^{\dagger}}}
\newcommand{\mz}{\ensuremath{\mathbb Z}}
\newcommand{\leg}[2]{\left(\frac{#1}{#2}\right)}
\newcommand{\muK}{\mu_{\omega}}
\newcommand{\thalf}{\tfrac12}
\newcommand{\lp}{\left(}
\newcommand{\rp}{\right)}
\newcommand{\Lam}{\Lambda_{[i]}}
\newcommand{\lam}{\lambda}
\def\L{\fracwithdelims}
\def\om{\omega}
\def\pbar{\overline{\psi}}
\def\phi{\varphi}
\def\lam{\lambda}
\def\lbar{\overline{\lambda}}
\newcommand\Sum{\Cal S}
\def\Lam{\Lambda}

\theoremstyle{plain}
\newtheorem{conj}{Conjecture}
\newtheorem{remark}[subsection]{Remark}

\makeatletter
\def\widebreve{\mathpalette\wide@breve}
\def\wide@breve#1#2{\sbox\z@{$#1#2$}%
     \mathop{\vbox{\m@th\ialign{##\crcr
\kern0.08em\brevefill#1{0.8\wd\z@}\crcr\noalign{\nointerlineskip}%
                    $\hss#1#2\hss$\crcr}}}\limits}
\def\brevefill#1#2{$\m@th\sbox\tw@{$#1($}%
  \hss\resizebox{#2}{\wd\tw@}{\rotatebox[origin=c]{90}{\upshape(}}\hss$}
\makeatletter

\title[Real zeros of quadratic Hecke L-functions]{Real zeros of quadratic Hecke $L$-functions}

\author{Peng Gao}
\address{School of Mathematical Sciences, Beihang University, Beijing 100191, P. R. China}
\email{penggao@buaa.edu.cn}
\begin{abstract}
 We study real zeros of a family of quadratic Hecke $L$-functions in the Gaussian field to show that more than twenty percent of the members have no zeros on the interval $(0, 1]$.
\end{abstract}

\maketitle

\noindent {\bf Mathematics Subject Classification (2010)}: 11M06, 11M20,  11M41, 11R42  \newline

\noindent {\bf Keywords}: Hecke $L$-functions, quadratic Hecke characters, mollifier, real zeros

\section{Introduction}
\label{sec 1}

  The non-vanishing issue of central values of $L$-functions has been studied intensively in the literature due to its deep arithmetic implications.  For the classical case, it is expected that the corresponding $L(s, \chi)$ never vanishes at
$s=1/2$ for any Dirichlet character $\chi$. This statement first appeared as a conjecture of S. Chowla \cite{Chow}, who concerned with the special case of primitive real characters.

  Partial resolutions to Chowla's conjecture was first given by M. Jutila \cite{Jutila},  who evaluated the first two moments of the family of quadratic Dirichlet $L$-functions to show that infinitely many such $L$-functions do not vanish at the central value.  By evaluating mollified moments instead, K. Soundararajan \cite{sound1} improved the result of Jutila to show that unconditionally, at least $87.5\%$ of the members of the quadratic family have non-vanishing central values. This percentage can be further improved to be $94.27\%$ if one assumes the generalized Riemann hypothesis (GRH). In fact, this follows from a result of A. E. \"Ozluk and C. Snyder \cite{O&S} on the one-level density of low-lying zeros of the family of quadratic Dirichlet $L$-functions, replacing the test function used in \cite{O&S} by an optimal one given in a paper of H. Iwaniec, W. Luo and P. Sarnak \cite[Appendix A]{ILS} .

  The statements of GRH and Chowla's conjecture imply that $L(\sigma, \chi) \neq 0$ for every primitive quadratic Dirichlet character
and all  $0 \leq \sigma \leq 1$. However, it was not even known previously whether every such Dirichlet $L$-function of sufficiently large conductor has a non-trivial real zero or not until J. B. Conrey and K. Soundararajan \cite{C&S} proved in 2002 that for at least $20\%$ of the odd square-free integers $d \geq 0$ we have $L(\sigma, \chi_{-8d}) >$ 0 for $0 \leq \sigma \leq 1$, where $\chi_{-8d}=\leg {-8d}{\cdot}$ is the Kronecker symbol.

  Motivated by the above result of Conrey and Soundararajan, we investigate in this paper the real zeros of a family of quadratic Hecke $L$-functions in the Gaussian number field  $\mq(i)$ . To state our result, we first introduce some notations.  Throughout the paper, we denote $K=\mq(i)$ and $\mathcal{O}_K$ for the ring of integers $\mz[i]$ of $K$. For any $c \in K$, we write $N(c)$ for its norm. We denote $L(s,\chi), \zeta_{K}(s)$ for the $L$-function associated to a Hecke character $\chi$ of $K$ and the Dedekind zeta function of $K$, respectively.

  We say a Hecke character $\chi$ is primitive modulo $q$ if it does not factor through $\left (\mathcal{O}_K / (q') \right )^{\times}$ for any  divisor $q'$ of $q$ such that $N(q')<N(q)$. We also say that $\chi$ is of trivial infinite type if its component at infinite places of $K$ is trivial.  For any $c \in \mathcal O_K$, we reserve in this paper the symbol $\chi_c$ for the quadratic residue symbol $\leg {c}{\cdot}$ defined in Section \ref{sec2.4}. It is shown in \cite[Section 2.1]{Gao2} that $\chi_{(1+i)^5d}$ defines a primitive quadratic Hecke character modulo $(1+i)^5d$ of trivial infinite type for any odd, square-free $d \in \mathcal{O}_K$. Here, we say that $d$ is odd if $(d,2)=1$ and $d$ is square-free if the ideal $(d)$ is not divisible by the square of any prime ideal.  The family of $L$-functions that we consider in this paper is given by
\begin{align*}
  \mathcal F = \big\{ L(s, \chi_{(1+i)^5d}) :  d \in \mathcal O_K \hspace{0.05in} \text{odd, square-free} \big\}.
\end{align*}

  Notice that every $L$-function in the above family satisfies a functional equation given by \eqref{fneqnquad} which implies that $\Gamma(s)L(s, \chi_{(1+i)^5d})$ is entire on the whole complex plane. As $\Gamma(s)$ has a simple pole at $s=0$, this implies that $L(0, \chi_{(1+i)^5d})=0$. Thus, if one assumes GRH for the above family of $L$-functions as well as the non-vanishing of these $L$-functions at the central point $1/2$, then one can deduce that $L(\sigma, (1+i)^5d) \neq 0$ for all $0< \sigma \leq 1$. Consequently, we have $L(\sigma, (1+i)^5d) > 0$ for all $0< \sigma \leq 1$ by continuity and the fact that $L(2, (1+i)^5d) > 0$. In this paper, we show that unconditionally, at least a positive percent of these $L$-functions do not vanish on $(0, 1]$. By further keeping in mind that the number of odd, square-free $d \in \mathcal O_K$ such that $N(d) \leq x$ is asymptotically $\frac{2\pi x}{3\zeta_K(2)}$  (see \cite[Section 3.1]{G&Zhao4}), our result is given as follows.
\begin{theorem}
\label{thm: nonvanishing}
For at least $20\%$ of the members of $L$-functions in $\mathcal F$,  we have $L(\sigma,\chi_{(1+i)^5d}) >0$
for $0< \sigma \le 1$.  More precisely, the
number of $L$-functions in $\mathcal F$
such that $N(d) \le x$ and $L(\sigma,\chi_{(1+i)^5d}) >0$ for all $0< \sigma \le 1$
exceeds $\frac{1}{5} (\frac {2\pi x}{3\zeta_K(2)})$ for all large $x$.
\end{theorem}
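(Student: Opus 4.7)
The plan is to adapt the mollifier method of Conrey and Soundararajan \cite{C&S} to the Gaussian integer setting. By the functional equation \eqref{fneqnquad}, the real zeros of $L(s,\chi_{(1+i)^5 d})$ in $(0,1]$ are symmetric about $s=1/2$; combined with $L(\sigma,\chi_{(1+i)^5 d})>0$ for $\sigma\geq 2$ and continuity, it therefore suffices to show that for at least $20\%$ of the admissible $d$ with $N(d)\leq x$, the function $L(\sigma,\chi_{(1+i)^5 d})$ has no real zero in $[1/2,1]$.

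To detect this simultaneous non-vanishing, I would introduce a short Dirichlet polynomial
\begin{equation*}
 M(d) = \sumflat_{N(\ell)\leq Y}\,\frac{\lambda(\ell)\,\chi_{(1+i)^5 d}(\ell)}{\sqrt{N(\ell)}},\qquad Y=x^{\theta},
\end{equation*}
where $\theta>0$ is small, the sum runs over squarefree ideals of $\mathcal O_K$, and the coefficients $\lambda(\ell)$ are built from the M\"obius function on $\mathcal O_K$ weighted by $P\bigl(\log(Y/N(\ell))/\log Y\bigr)$ for a polynomial $P$ to be optimized, so that $M(d)$ serves as a proxy for $L(1/2,\chi_{(1+i)^5 d})^{-1/2}$. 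The central analytic task is then the asymptotic evaluation, uniformly in $\sigma\in[1/2,1]$, of the two mollified moments
\begin{equation*}
 \sumstar_{d}\,\Phi\bigl(N(d)/x\bigr)\,M(d)^2 \qquad\text{and}\qquad \sumstar_{d}\,\Phi\bigl(N(d)/x\bigr)\,L(\sigma,\chi_{(1+i)^5 d})\,M(d)^2,
\end{equation*}
where $\Phi$ is a smooth non-negative weight and the star restricts the outer sum to odd squarefree $d\in\mathcal O_K$. The diagonal contributions in each sum reduce to Euler products over the prime ideals of $\mathcal O_K$, while the off-diagonal terms are controlled via Poisson summation in $\mathcal O_K$ combined with the quadratic large sieve inequality over $K$ available from the author's earlier work \cite{Gao2,G&Zhao4}.

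A Cauchy--Schwarz argument then converts these moment asymptotics into a lower bound on the number of $d$ with $L(1/2,\chi_{(1+i)^5 d})\neq 0$, and an explicit-formula argument in the spirit of \cite[\S 3]{C&S} upgrades this to non-vanishing throughout $[1/2,1]$: any real zero $\beta\in[1/2,1]$ would contribute a detectable singular term to the logarithmic derivative $L'/L(\sigma,\chi_{(1+i)^5 d})$ that, when averaged against $M(d)^2$, is incompatible with the asymptotic formula for the mollified second moment. Optimizing the polynomial $P$ subject to the resulting quadratic form, exactly as in \cite[\S 2]{C&S}, should reproduce the $1/5$ constant.

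The main technical obstacle will be the uniform asymptotic evaluation of the mollified moments as $\sigma\to 1/2^+$: the off-diagonal Gauss sums arising from Poisson summation in $\mathcal O_K$ are no longer trivially negligible near the critical line, and their explicit evaluation at the ramified prime $(1+i)$ and at split or inert rational primes requires the quadratic reciprocity law in $K$. A secondary issue is numerical: since the Euler factors are now indexed by the prime ideals of $\mathcal O_K$ with distinct split, inert, and ramified local behavior, the quadratic form defining the optimal $P$ differs subtly from the rational case, and one must verify that its extremum still attains the $1/5$ threshold rather than some strictly smaller proportion.
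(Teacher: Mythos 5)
There is a genuine gap at the heart of your proposal: the mechanism you offer for passing from non-vanishing at the single point $s=1/2$ to non-vanishing on the whole segment. The paper (following Conrey--Soundararajan) does not use a first-moment/Cauchy--Schwarz argument at all, and does not invoke an explicit formula for $L'/L$. Its engine is Selberg's argument-principle lemma (Lemma \ref{Lemma 2.1}): applied to the mollified function $f(s,d)=L(s,\chi_{(1+i)^5d})M(s,d)$ on a thin rectangle $[W_0,W_1]\times[-H,H]$ with $W_0=\tfrac12-R/\log X$, $H=S/\log X$, it bounds the number of zeros of $f$ in the box by boundary integrals of $\log|f|$. Summing over $d$ and using the concavity of the logarithm, the average of $\log|LM|^2$ is bounded by $\log$ of the averaged mollified second moment $\mathcal{W}(\delta_1,\Phi)$, which is then evaluated asymptotically (Propositions \ref{Proposition 2.1}--\ref{Proposition 2.4}) and fed back into the integrals $J_1,J_2$ of \eqref{Jdef} to produce the numerical bound $0.79$ for the proportion \emph{with} a real zero. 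Your substitute --- that a real zero $\beta\in[1/2,1]$ ``contributes a detectable singular term to $L'/L$ incompatible with the mollified second moment'' --- is not a proof sketch that can be completed: unconditionally, a putative zero at $\beta>1/2$ is a Siegel-type configuration, and no averaged explicit-formula argument is known that converts its presence into a contradiction with a second-moment asymptotic for a positive proportion of $d$. This is precisely the difficulty the argument-principle device was invented to circumvent.

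Two further structural consequences follow from the missing lemma. First, your mollifier is frozen at the central point ($N(\ell)^{-1/2}$ coefficients), whereas the argument-principle approach forces the mollifier $M(s,d)$ of \eqref{mollifier} to be a genuine function of $s$, and forces the second moment $\mathcal{W}(\delta_1,\Phi)$ to be evaluated uniformly for complex $\delta_1$ with $\Re(\delta_1)$ as negative as $-R/\log X$ (the left edge of the box lies to the left of the critical line); your proposed moments in $\sigma\in[1/2,1]$ do not cover this range. Second, the constant $1/5$ does not arise from optimizing a Cauchy--Schwarz quadratic form in $P$; it comes from the interplay of the boundary integrals $J_1,J_2$ with the specific choices $b=0.64$, $R=6.8$, $S=\pi/(2(1-b)(1-20\kappa))$ and the asymptotic for $\mathcal{W}$ in Proposition \ref{Proposition 2.4}. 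Your supporting toolkit (Poisson summation over $\mathcal{O}_K$, the quadratic large sieve in $K$, explicit Gauss-sum evaluations at $(1+i)$) is the right one for evaluating the moments, but without the Selberg-lemma framework the proof does not reach the stated theorem.
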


  Our proof of Theorem \ref{thm: nonvanishing} is mainly based on the approach of Conrey and Soundararajan in their above mentioned work  \cite{C&S} concerning the non-vanishing of quadratic Dirichlet $L$-functions on the real line. We also note here that similar to what is pointed out in \cite{C&S}, the approach in our paper implies that for all large $x$, the number of odd, square-free $d \in \mathcal O_K$ with $N(d) \le x$ such
that $L(s,\chi_{(1+i)^5d})$ has a zero in $[\sigma,1]$ is $\ll x^{1-(1-\varepsilon)(\sigma-\frac 12)}$
for any fixed $\sigma \geq 1/2$ and $\varepsilon >0$. Another outcome of our approach is that there are many $L$-functions having no non-trivial zeros in a thin rectangle containing the real line.  More precisely, there exists a constant $c>0$ such that for at least $20\%$ of the members of $\mathcal F$ with $N(d) \le x$, the corresponding $L(s,\chi_{(1+i)^5d})$
has no zeros in the rectangle $\{\sigma+it: \ \ \sigma \in (0,1], \ \ |t|\le c/\log x\}$ for all large $x$.

\section{Preliminaries}
\label{sec 2}

 The proof of our result requires many tools as well as auxiliary results, which we gather them here first.
\subsection{Quadratic residue symbol and Gauss sum}
\label{sec2.4}
  It is well-known that $K=\mq(i)$ has class number one and that every ideal in $\mathcal{O}_K$ co-prime to $2$ has a unique generator which is $\equiv 1 \bmod (1+i)^3$. Such a generator is called primary.  Further notice that $(1+i)$ is the only prime ideal in $\mathcal{O}_K$ that lies above the ideal $(2) \in \mz$. We now fix a generator $n$ for each ideal $(n)$ in $\mathcal{O}_K$ by taking $n$ to be of the form $(1+i)^m n'$ with $m \geq 1$ and $n'$ primary.  We denote the set of these generators by $G$ throughout the paper. For $a, b \in \mathcal O_K$, we denote $(a,b)$ for their greatest common divisor such that $(a, b) \in G$.

  We denote $\varpi$ for a prime element in $\mathcal O_K$, which means that the ideal generated by $\varpi$ is a prime ideal. We denote the group of units in $\mathcal{O}_K$ by $U_K$ and the discriminant of $K$ by $D_{K}$. Thus, we have $U_K=\{ \pm 1, \pm i \}$ and $D_{K}=-4$. We write   $d = \square$ for a perfect square in $\mathcal O_K$, by which we mean that $d=n^2$ for some $n \in \mathcal O_K$.

  The quadratic residue symbol $\leg{\cdot}{\varpi}$ is defined for any odd prime $\varpi$ such that for any $a \in
\mathcal{O}_{K}$,  $\leg{a}{\varpi} =0$ when $\varpi | a$ and $\leg{a}{\varpi} \equiv
a^{(N(\varpi)-1)/2} \pmod{\varpi}$ with $\leg{a}{\varpi} \in \{
\pm 1 \}$ when $(a, \varpi)=1$.
 The definition is then extended to $\leg{\cdot}{n}$ multiplicatively for any odd $n \in \mathcal O_K$. Here, we define $\leg {\cdot}{n}=1$ for $n \in U_K$.

  As mentioned in Section \ref{sec 1}, we denote $\chi_{c}$ for the quadratic residue symbol $\leg {c}{\cdot}$.  In this paper,  we regard $\chi_{\pm 1}$ as a principal character modulo $1$, so that $\chi_{\pm 1}(a)=1$ for all $a \in \mathcal O_K$. Note that this implies that $L(s, \chi_{\pm 1})=\zeta_K(s)$. For other values of $c$, we shall regard $\chi_{c}$ as a Hecke character of trivial infinite type modulo $16c$ as this is justified in \cite[Section 2.1]{G&Zhao2019}. In particular, we notice that $\leg {c}{a}=0$ when $1+i|a$ and $c \neq \pm 1$.

 We define the quadratic Gauss sum $g(r, n)$ for $r, n \in \mathcal O_K$ with $n$ being odd, by
\begin{align*}
  g(r,n) = \sum_{x \bmod{n}} \leg{x}{n} \widetilde{e}\leg{rx}{n},
\end{align*}
  where we define for any complex number $z$,
\begin{align*}
 \widetilde{e}(z) =\exp \left( 2\pi i  \left( \frac {z}{2i} - \frac {\bar{z}}{2i} \right) \right) .
\end{align*}

 We denote $\mu_{[i]}$  for the analogue on $\mathcal O_K$ of the usual M\"obius function on $\mz$ and $\varphi_{[i]}(n)$ for the number of elements in the reduced residue class of $\mathcal{O}_K/(n)$. We recall the following explicitly evaluations of $g(r,n)$ given in \cite[Lemma 2.2]{G&Zhao4}.
\begin{lemma} \label{Gausssum}
\begin{enumerate}[(i)]
\item  We have
\begin{align*}
 g(rs,n) & = \overline{\leg{s}{n}} g(r,n), \qquad (s,n)=1, \\
   g(k,mn) & = g(k,m)g(k,n),   \qquad  m,n \text{ primary and } (m , n)=1 .
\end{align*}
\item Let $\varpi$ be a primary prime in $\mathcal{O}_K$. Suppose $\varpi^{h}$ is the largest power of $\varpi$ dividing $k$. (If $k = 0$ then set $h = \infty$.) Then for $l \geq 1$,
\begin{align*}
g(k, \varpi^l)& =\begin{cases}
    0 \qquad & \text{if} \qquad l \leq h \qquad \text{is odd},\\
    \varphi_{[i]}(\varpi^l) \qquad & \text{if} \qquad l \leq h \qquad \text{is even},\\
    -N(\varpi)^{l-1} & \text{if} \qquad l= h+1 \qquad \text{is even},\\
    \leg {ik\varpi^{-h}}{\varpi}N(\varpi)^{l-1/2} \qquad & \text{if} \qquad l= h+1 \qquad \text{is odd},\\
    0 \qquad & \text{if} \qquad l \geq h+2.
\end{cases}
\end{align*}
\end{enumerate}
\end{lemma}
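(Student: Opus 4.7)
The plan is to establish part (i) by standard changes of variable in the defining sum, and to prove part (ii) by a three-way case analysis on the size of $l$ relative to $h$. The key external inputs are the quadratic reciprocity law for coprime primary elements of $\mathbb{Z}[i]$ and the classical prime-modulus evaluation $g(1,\varpi)=\left(\frac{i}{\varpi}\right)N(\varpi)^{1/2}$, both of which are classical and go back to Hecke.

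For the first identity in (i), I would substitute $x\mapsto s^{-1}y\pmod n$, which is a bijection of $\mathcal{O}_K/(n)$ since $(s,n)=1$, and then pull out $\left(\frac{s^{-1}}{n}\right)=\overline{\left(\frac{s}{n}\right)}$ from the sum, using that the symbol is multiplicative and $\pm 1$-valued. For the second identity, I would parametrize $x\bmod mn$ via the Chinese remainder theorem as $x=an+bm$, so that the additive character factors as $\widetilde{e}(ka/m)\widetilde{e}(kb/n)$ and the quadratic symbol decomposes as $\left(\frac{a}{m}\right)\left(\frac{b}{n}\right)\left(\frac{n}{m}\right)\left(\frac{m}{n}\right)$; quadratic reciprocity for primary $m,n$ eliminates the last two factors, producing the claimed multiplicativity.

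For part (ii), when $l\le h$ the argument $kx/\varpi^l$ lies in $\mathcal{O}_K$, so the additive character is identically $1$ and the sum collapses to $\sum_{x\bmod\varpi^l}\left(\frac{x}{\varpi}\right)^l$, which vanishes for $l$ odd (a non-trivial character sum) and equals $\varphi_{[i]}(\varpi^l)$ for $l$ even. When $l\ge h+2$, I would apply the shift $x\mapsto x+y\varpi^{l-h-1}$ with $y\bmod\varpi$: since $l-h-1\ge 1$ the symbol is invariant, while the additive character acquires the factor $\widetilde{e}(k'y/\varpi)$ with $k'=k\varpi^{-h}$ coprime to $\varpi$, and averaging over $y$ annihilates the whole sum because $a\mapsto\widetilde{e}(a/\varpi)$ is a non-trivial additive character on $\mathcal{O}_K/\varpi$.

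The remaining case $l=h+1$ is the main step. For $l=1$, part (i) gives $g(k,\varpi)=\left(\frac{k}{\varpi}\right)g(1,\varpi)$, and the classical evaluation of $g(1,\varpi)$ delivers the formula. For $l\ge 2$, I would decompose $x=a+b\varpi^{l-1}$ with $a\bmod\varpi^{l-1}$ and $b\bmod\varpi$; writing $k=\varpi^h k'$ with $(k',\varpi)=1$, the symbol equals $\left(\frac{a}{\varpi}\right)^l$, and the additive character becomes $\widetilde{e}(k'a/\varpi)\widetilde{e}(\varpi^{h-1}k'b)$, the latter being $1$ since $h\ge 1$. Summing over $b$ and then reducing the $a$-sum (everything depending only on $a\bmod\varpi$) yields $g(k,\varpi^l)=N(\varpi)^{l-1}\sum_{a\bmod\varpi}\left(\frac{a}{\varpi}\right)^l\widetilde{e}(k'a/\varpi)$, which equals $-N(\varpi)^{l-1}$ for $l$ even and $\left(\frac{ik\varpi^{-h}}{\varpi}\right)N(\varpi)^{l-1/2}$ for $l$ odd via the prime Gauss sum. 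The principal obstacle is keeping the case distinctions and parities organized; once the correct stratification is in place, each individual evaluation is a direct computation resting on reciprocity and the prime-modulus Gauss sum.
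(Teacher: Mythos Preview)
Your proof is correct and complete. The paper itself does not prove this lemma at all; it merely quotes the result from \cite[Lemma~2.2]{G&Zhao4}, so your argument supplies strictly more than the paper does, and it is precisely the standard derivation (change of variable plus CRT for part~(i), stratification by $l$ versus $h$ and reduction to the prime Gauss sum $g(1,\varpi)=\left(\frac{i}{\varpi}\right)N(\varpi)^{1/2}$ for part~(ii)) that one would expect to find in the cited reference.
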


\subsection{The approximate functional equation}
\label{sect: apprfcneqn}
    For any primitive quadratic Hecke character $\chi$ of $K$ of trivial infinite type, a well-known result of E. Hecke asserts that $L(s, \chi)$ has an analytic continuation to the entire complex plane with a simple pole at $s=1$ only when $\chi$ is principal. Moreover,  $L(s, \chi)$ satisfies the following
functional equation (see \cite[Theorem 3.8]{iwakow})
\begin{align}
\label{fneqn}
  \Lambda(s, \chi) = W(\chi)(N(m))^{-1/2}\Lambda(1-s, \chi),
\end{align}
   where $m$ is the conductor of $\chi$, $|W(\chi)|=(N(m))^{1/2}$ and
\begin{align}
\label{Lambda}
  \Lambda(s, \chi) = (|D_K|N(m))^{s/2}(2\pi)^{-s}\Gamma(s)L(s, \chi).
\end{align}

   In particular, we have the following functional equation for $\zeta_K(s)$:
\begin{align}
\label{fcneqnforzeta}
\pi^{-s}\Gamma(s)\zeta_K(s)=\pi^{-(1-s)}\Gamma(1-s)\zeta_K(1-s).
\end{align}

   When $\chi=\chi_{(1+i)^5d}$ for any odd, square-free $d \in \mathcal{O}_K$, we combine \cite[Theorem 3.8]{iwakow} and \cite[Lemma 2.2]{Gao2} to see that
$W(\chi_{(1+i)^5d})=g(\chi_{(1+i)^5d})=\sqrt{N((1+i)^5d)}$. Thus, the functional equation \eqref{fneqn} in this case becomes
\begin{align}
\label{fneqnquad}
  \Lambda(s, \chi_{(1+i)^5d}) = \Lambda(1-s, \chi_{(1+i)^5d}).
\end{align}

 We now set
\begin{align}
\label{xidef}
\xi(s,\chi_{(1+i)^5d})=\biggl(\frac{2^5N(d)}{\pi^2}\biggr)^{\frac{s}{2}-\frac 14}
\Gamma(s) L(s,\chi_{(1+i)^5d}).
\end{align}
   It follows from \eqref{Lambda} and \eqref{fneqnquad} that we have the following functional equation
\begin{align}
\label{fcneqnforxi}
\xi(s,\chi_{(1+i)^5d})=\xi(1-s,\chi_{(1+i)^5d}).
\end{align}

 Throughout the paper, we fix a positive real number $\kappa \leq 1/100$ and let $\delta_1$ and $\delta_2$ be two complex numbers satisfying $\max(|\delta_1|, |\delta_2)|) \leq \kappa$. We further denote $\tau= \frac{\delta_1+\delta_2}2$, $\delta =
\frac{\delta_1-\delta_2}{2}$ so that we have  $\max(|\tau|, |\delta)|) \leq \kappa$ as well. We define for real numbers $x >0, c > |\Re(\tau)|$,
\begin{align}
\label{Wdt}
  W_{\delta,\tau}(x) = \frac{1}{2\pi i} \int\limits_{(c)} \Gamma_\delta(s)
x^{-s} \frac{2s}{s^2 -\tau^2} ds,
\end{align}
 where
$$
\Gamma_\delta(s) = \Gamma\Big(\frac 12 +s +\delta \Big)
\Gamma\Big(\frac 12 +s -\delta\Big).
$$

  The following Lemma establishes some analytical properties concerning $W_{\delta,\tau}(x)$.
\begin{lemma}
\label{lemma 3.1}  The function $W_{\delta,\tau}(x)$
is a smooth complex-valued function for $x \in (0,\infty)$.  For $x$ near $0$, we
have
\begin{align}
\label{Wxsmall}
W_{\delta,\tau}(x) = \Gamma_\delta(\tau)x^{-\tau} +\Gamma_\delta(-\tau)
x^{\tau} + O(x^{1/4-\epsilon}).
\end{align}
For large $x$ and any integer $\nu$, we have
\begin{align}
\label{Wxlarge}
W_{\delta, \tau}^{(\nu)} (x) \ll_{\nu} x^{\nu+3} e^{-2x^{1/2}}
\ll_{\nu} e^{-x^{1/2}}.
\end{align}
\end{lemma}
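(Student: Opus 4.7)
The plan is to use standard Mellin-transform techniques: Stirling's formula for smoothness and for the large-$x$ decay, and a contour shift to the left for the small-$x$ expansion. The key tool throughout is the estimate $|\Gamma_\delta(\sigma+it)|\ll_{\sigma,\delta}(1+|t|)^{2\sigma}e^{-\pi|t|}$ on vertical lines, coming from Stirling applied to each of the two Gamma factors (together producing $e^{-\pi|t|}$ rather than $e^{-\pi|t|/2}$). Smoothness on $(0,\infty)$ is then immediate: the integrand in \eqref{Wdt} is absolutely and locally uniformly convergent in $x$, and differentiating $\nu$ times under the integral inserts only a polynomial factor of the form $s(s+1)\cdots(s+\nu-1)\,x^{-s-\nu}$, which is still dominated by the $e^{-\pi|t|}$ decay.

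For the asymptotic \eqref{Wxsmall}, I would shift the contour from $\Re(s)=c$ to $\Re(s)=-\tfrac14+\epsilon$ for small $\epsilon>0$. Since $\kappa\le 1/100$, the only poles of the integrand in the intervening strip are the simple poles at $s=\pm\tau$ coming from $2s/(s^2-\tau^2)$; the nearest poles of $\Gamma_\delta$ sit at $s=-\tfrac12\pm\delta$, safely to the left of the shifted line. Writing $\frac{2s}{s^2-\tau^2}=\frac{1}{s-\tau}+\frac{1}{s+\tau}$, the residues at $s=\pm\tau$ contribute exactly $\Gamma_\delta(\tau)x^{-\tau}$ and $\Gamma_\delta(-\tau)x^{\tau}$, while the integral on the shifted line is $O(x^{1/4-\epsilon})$ by absolute convergence, which gives \eqref{Wxsmall}.

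For the bound \eqref{Wxlarge}, I would instead shift the contour far to the right, to $\Re(s)=A$ with $A=A(x)$ to be chosen; no new poles are crossed since the integrand is holomorphic for $\Re(s)>\kappa$. Differentiating $\nu$ times under the integral and applying Stirling to $\Gamma_\delta$ gives
\[
|W^{(\nu)}_{\delta,\tau}(x)|\ll x^{-A-\nu}\int_{-\infty}^{\infty}(1+|t|)^{2A+\nu-1}e^{-\pi|t|}\,dt\ll \frac{\Gamma(2A+\nu)}{\pi^{2A+\nu}}\,x^{-A-\nu}.
\]
A second application of Stirling to $\Gamma(2A+\nu)$ together with the near-optimal choice $A\approx\tfrac{\pi}{2}x^{1/2}$ produces a bound of the shape $e^{-\pi x^{1/2}}$ times a polynomial in $x$, comfortably stronger than the claimed $x^{\nu+3}e^{-2x^{1/2}}$.

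The main (minor) obstacle I anticipate is purely bookkeeping in the large-$x$ step: one must combine the polynomial factor $|s|^\nu$ inserted by differentiation, the extra $x^{-\nu}$ from $x^{-s-\nu}$, and Stirling for $\Gamma(2A+\nu)$ consistently before choosing $A$. However, even a non-optimal choice such as $A=\lceil x^{1/2}\rceil$ already yields decay strictly faster than $e^{-2x^{1/2}}$, so no fine tuning of constants is needed, and the entire argument is a routine application of the Mellin--Stirling toolkit.
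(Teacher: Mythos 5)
Your treatment of smoothness and of the small-$x$ expansion \eqref{Wxsmall} is correct and is exactly the paper's argument: differentiate under the integral sign on a fixed vertical line, and shift to $\Re(s)=-\tfrac14+\epsilon$, picking up the residues $\Gamma_\delta(\pm\tau)x^{\mp\tau}$ at $s=\pm\tau$ while staying to the right of the poles of $\Gamma_\delta$ at $\Re(s)=-\tfrac12\pm\Re(\delta)$.

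The large-$x$ step, however, has a genuine gap. Your ``key tool'' $|\Gamma_\delta(\sigma+it)|\ll_{\sigma,\delta}(1+|t|)^{2\sigma}e^{-\pi|t|}$ is a fixed-$\sigma$ statement whose implied constant grows roughly like $\Gamma(\sigma)^2\asymp(\sigma/e)^{2\sigma}$ (compare both sides at $t=0$, where the left side is $\approx\Gamma(\sigma)^2$ and the right side is $O(1)$). You then apply it on the line $\Re(s)=A$ with $A\asymp x^{1/2}\to\infty$ and silently drop this constant, arriving at $\Gamma(2A+\nu)\pi^{-(2A+\nu)}x^{-A-\nu}$ and, after optimizing, a decay of order $e^{-\pi x^{1/2}}$. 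That conclusion is actually false: already in the special case $\delta=\tau=0$ one has essentially $W(x)\asymp K_0(2\sqrt{x})\asymp x^{-1/4}e^{-2\sqrt{x}}$, so no contour shift can produce decay faster than $e^{-2\sqrt{x}}$. The superexponential constant you discarded is precisely what restores the correct answer. The repair is what the paper does: on $\Re(s)=c$ use the monotonicity bound $|\Gamma(x+iy)|\le\Gamma(x)$ for $x\ge1$ together with $s\Gamma(s)=\Gamma(s+1)$ to bound $|\Gamma_\delta(s)\,s(s+1)\cdots(s+\nu-1)|\le\Gamma(c+\nu+3)^2$, take the $t$-integrability from $\int_{(c)}|ds|/|s^2-\tau^2|\ll(c-|\Re(\tau)|)^{-1}$ rather than from a spurious $e^{-\pi|t|}$, apply Stirling to $\Gamma(c+\nu+3)^2\ll((c+\nu+3)/e)^{2c+2\nu+6}$, and choose $c=x^{1/2}-\nu-3$; this yields exactly $x^{\nu+3}e^{-2x^{1/2}}$ and not more. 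Your fallback remark that ``even $A=\lceil x^{1/2}\rceil$ yields decay strictly faster than $e^{-2x^{1/2}}$'' is a symptom of the same error: with the correct Gamma estimate that choice gives $e^{-2x^{1/2}}$ on the nose, up to the polynomial factor.
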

\begin{proof}  We move the line of integration in \eqref{Wdt}
to $\Re(s) = -1/4+\epsilon$ to deduce readily \eqref{Wxsmall}.  On the other hand, for any $c>|\Re (\tau)|$, we have
$$
W_{\delta,\tau}^{(\nu)}(x)=\frac{(-1)^{\nu}}{2\pi i} \int\limits_{(c)} \Gamma_\delta (s) s(s+1)\cdots
(s+\nu-1) x^{-s-\nu} \frac{2s}{s^2 -\tau^2} ds.
$$
  This implies that $W_{\delta,\tau}(x)$ is smooth.
To prove \eqref{Wxlarge}, we may suppose that
$x > \nu +4$. Using the facts that $|\Gamma(x+iy)|\le \Gamma(x)$ for $x\ge 1$ and
$s\Gamma(s)=\Gamma(s+1)$, we see that
$$
W_{\delta,\tau}^{(\nu)}(x) \ll_{\nu} |\Gamma(c+\nu+3)|^2 x^{-c} \int\limits_{(c)}
\frac{|ds|}{|s^2-\tau^2|} \ll_{\nu} \Gamma(c +\nu+3)^2 \frac{x^{-c}}
{c-|\Re(\tau)|}\ll_{\nu} \Big( \frac{c+\nu+3}{e}\Big)^{2c+2\nu+6} \frac{x^{-c}}
{c-|\Re(\tau)|},
$$
where the last estimation above follows from Stirling's formula (see \cite[(5.113)]{iwakow}).
The last assertion of the lemma now follows by taking $c= x^{1/2} -\nu-3 (\ge 2)$ above.
\end{proof}

For any complex number $s$ and any primary $n \in \mathcal O_K$,  we define
$$
r_s(n) = \sum_{\substack{ab=n \\ a, b \equiv 1 \bmod (1+i)^3} } \Big( \frac {N(a)}{N(b)}\Big)^s.
$$
 Note that $r_s(n)$ is easily seen to be an even function of $s$. We also define for any odd, square-free $d \in \mathcal O_K$,
$$
A_{\delta,\tau} (d) = \sum_{n \equiv 1 \bmod (1+i)^3} \frac{r_{\delta}(n)}{\sqrt{N(n)}}
\leg{(1+i)^5d}{n} W_{\delta,\tau}\leg{\pi^2 N(n) }{2^5N(d)}.
$$

 Now, we present an approximate functional equation for $
\xi(\frac 12+\delta_1,\chi_{(1+i)^5d})\xi(\frac 12+\delta_2,\chi_{(1+i)^5d})$.
\begin{lemma}
\label{Lemma 3.2} With the notations above for $\delta_1$ and $\delta_2$. We have for any odd, square-free $d \in \mathcal O_K$,
$$
\xi(\tfrac 12+\delta_1,\chi_{(1+i)^5d})\xi(\tfrac 12+\delta_2,\chi_{(1+i)^5d}
) = A_{\delta,\tau}(d).
$$
\end{lemma}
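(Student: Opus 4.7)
The plan is to evaluate a single contour integral in two ways, one giving the Dirichlet series $A_{\delta,\tau}(d)$ and the other giving the product of $\xi$-values. Set $\chi = \chi_{(1+i)^5 d}$ and consider
$$
J(d) = \frac{1}{2\pi i}\int_{(c)} \xi(\tfrac12+s+\delta,\chi)\,\xi(\tfrac12+s-\delta,\chi)\,\frac{2s}{s^2-\tau^2}\,ds
$$
for some fixed $c$ with $|\Re(\tau)|<c<\tfrac12-\kappa$ and also $c>\tfrac12+\kappa$ so that the Dirichlet series of $L(\tfrac12+s\pm\delta,\chi)$ converge absolutely on the line of integration (take $c$ large enough and shrink later if needed, or handle the two expansions on different lines and shift in between).

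First I would expand $J(d)$ as the Dirichlet series $A_{\delta,\tau}(d)$. Using \eqref{xidef}, the product $\xi(\tfrac12+s+\delta,\chi)\xi(\tfrac12+s-\delta,\chi)$ equals
$$
\Bigl(\tfrac{2^5 N(d)}{\pi^2}\Bigr)^{s}\,\Gamma_\delta(s)\,L(\tfrac12+s+\delta,\chi)L(\tfrac12+s-\delta,\chi).
$$
Writing the $L$-series as a sum over primary $n\in\mathcal O_K$, a direct Dirichlet convolution yields
$$
L(\tfrac12+s+\delta,\chi)L(\tfrac12+s-\delta,\chi)=\sum_{n\equiv 1\bmod (1+i)^3}\frac{\chi(n)\,r_{-\delta}(n)}{N(n)^{1/2+s}},
$$
and $r_{-\delta}=r_\delta$ since $r_s$ is even in $s$. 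The exponential decay of $\Gamma_\delta$ on vertical lines justifies interchanging the sum with the contour integral, and the inner integral is recognized, via \eqref{Wdt} with $x=\pi^2 N(n)/(2^5 N(d))$, as $W_{\delta,\tau}(\pi^2 N(n)/(2^5 N(d)))$. This gives $J(d)=A_{\delta,\tau}(d)$.

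Next I would compute $J(d)$ by shifting the contour from $\Re(s)=c$ to $\Re(s)=-c$. Since $\chi_{(1+i)^5 d}$ is a non-principal primitive Hecke character, the completed $L$-function $\Gamma(s)L(s,\chi)$ — and therefore $\xi(s,\chi)$ — is entire, so the only poles of the integrand in the strip are the simple poles of the kernel at $s=\pm\tau$, each with residue $1$. The functional equation \eqref{fcneqnforxi} applied to each factor gives
$$
\xi(\tfrac12-s+\delta,\chi)\xi(\tfrac12-s-\delta,\chi)=\xi(\tfrac12+s-\delta,\chi)\xi(\tfrac12+s+\delta,\chi),
$$
i.e.\ the $\xi\,\xi$ part is even in $s$, while $2s/(s^2-\tau^2)$ is odd in $s$. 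Parametrizing the shifted contour by $s\mapsto -s$ therefore shows the integral on $(-c)$ equals $-J(d)$, so
$$
2J(d)=\xi(\tfrac12+\tau+\delta,\chi)\xi(\tfrac12+\tau-\delta,\chi)+\xi(\tfrac12-\tau+\delta,\chi)\xi(\tfrac12-\tau-\delta,\chi).
$$
A second application of \eqref{fcneqnforxi} identifies the two summands, and since $\tau\pm\delta=\delta_{1,2}$, we conclude $J(d)=\xi(\tfrac12+\delta_1,\chi)\xi(\tfrac12+\delta_2,\chi)$, matching $A_{\delta,\tau}(d)$.

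The main technical obstacle is just bookkeeping: one must check that a single contour $\Re(s)=c$ may be used both for absolute convergence of the Dirichlet series and for the residue computation (alternatively, one performs the Dirichlet expansion on a far-right line and the residue shift after first moving to $\Re(s)=c$, picking up no extra poles since $\xi\xi$ is entire), and one must verify that the Stirling decay of $\Gamma_\delta$ on vertical lines is strong enough to swap sum and integral and to shift contours. Beyond this, the argument is the standard approximate functional equation derivation tailored to the $\xi$-normalisation fixed in \eqref{xidef}.
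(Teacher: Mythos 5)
Your argument is correct and is essentially the same as the paper's: evaluate the contour integral of $\xi(\tfrac12+s+\delta,\chi)\xi(\tfrac12+s-\delta,\chi)\cdot 2s/(s^2-\tau^2)$ once by expanding the $L$-product as a Dirichlet series (giving $A_{\delta,\tau}(d)$), and once by shifting to $\Re(s)=-c$, collecting residues at $s=\pm\tau$ and using the functional equation together with the evenness of the $\xi\xi$-factor to identify the shifted integral with $-A_{\delta,\tau}(d)$. The only blemish is the inconsistent constraint "$|\Re(\tau)|<c<\tfrac12-\kappa$ and also $c>\tfrac12+\kappa$"; the paper simply takes a single $c$ with $\tfrac12+|\Re(\delta)|<c<\tfrac32-|\Re(\delta)|$, for which both the absolute convergence of the Dirichlet series and the pole-crossing work at once, so the parenthetical workaround you describe is unnecessary.
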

\begin{proof}  We begin with the following integral for some $3/2 - |\Re(\delta)| > c > 1/2+|\Re(\delta)|$,
$$
\frac{1}{2\pi i}\int\limits_{(c)} \xi(\tfrac 12+\delta+s,\chi_{(1+i)^5d})
\xi(\tfrac 12-\delta+s,\chi_{(1+i)^5d}) \frac{2s}{s^2 -\tau^2} ds.
$$
 We evaluate the above integral by first writing $L(\tfrac 12+\delta+s,\chi_{(1+i)^5d}) L(\tfrac 12-\delta+s,\chi_{(1+i)^5d})$
as a  Dirichlet series
$\displaystyle{\sum_{n \equiv 1 \bmod (1+i)^3}} \frac{ r_\delta(n)}{N(n)^{\frac 12+s}} \leg{(1+i)^5d}{n}$ to see that the above expression equals
$A_{\delta,\tau}(d)$ upon integrating term by term.
On the other hand, we move the line of integration to $\Re(s)=-c$ to encounter
poles at $s=\tau$, $-\tau$. The residues contribute
$\xi(\frac 12+\delta+\tau,\chi_{(1+i)^5d})\xi(\frac 12-\delta+\tau,\chi_{(1+i)^5d})
+\xi(\frac 12+\delta-\tau,\chi_{(1+i)^5d})\xi(\frac 12-\delta-\tau,\chi_{(1+i)^5d})
= 2\xi(\frac 12+\delta_1,\chi_{(1+i)^5d})\xi(\frac 12+\delta_2,\chi_{(1+i)^5d})$ by the functional equation \eqref{fcneqnforxi}.
Using \eqref{fcneqnforxi} again, we see that the remaining integral on the $-c$ line equals to $-A_{\delta,\tau}(d)$ via a change of variable $s \to -s$. This leads to the desired result.
\end{proof}

\subsection{Quadratic large sieves}
  We include in this section two large sieve results concerning quadratic Hecke characters. They are generalizations in $K$ of the well-known large sieve results due to D. R. Heath-Brown \cite{DRHB} on quadratic Dirichlet characters. The first lemma can be obtained by applying a large sieve result of K. Onodera \cite{Onodera} on quadratic residue symbols in $K$ in the proof of \cite[Corollary 2]{DRHB} and \cite[Lemma 2.4]{sound1}.
\begin{lemma}\label{lem: estimates for character sums}
  Let $N, Q$ be positive integers, and let $a_1, \cdots, a_n$ be arbitrary complex numbers. Let $S(Q)$ denote the set of $\chi_m$ for square-free $m$ satisfying $N(m) \leq Q$. Then for any $\epsilon > 0$,
\begin{align*}
 \sum_{\chi \in S(Q)}\Big | \sum_{\substack{n \equiv 1 \bmod {(1+i)^3} \\ N(n) \leq N}} a_n\chi(n) \Big |^2 \ll_{\epsilon}  (QN)^{\epsilon}(Q + N) \sum_{\substack{n_1, n_2 \equiv 1 \bmod {(1+i)^3} \\ N(n_1), N(n_2) \leq N \\ n_1n_2=\square}}|a_{n_1}a_{n_2}|.
\end{align*}
 Let $M$ be a positive integer, and for each $m \in \mathcal O_K$ satisfying $N(m) \leq M$, we write $m = m_1m_2^2$ with $m_1$ square-free and $m_2 \in G$. Suppose the sequence $a_n$ satisfies $|a_n| \ll N(n)^\varepsilon$, then
\begin{align*}
\sum_{N(m) \leq M} \frac{1}{N(m_2)} \left|\sum_{N(n) \leq N} a_n \left( \frac{m}{n}\right) \right|^2 \ll (MN)^\varepsilon N (M+N).
\end{align*}
\end{lemma}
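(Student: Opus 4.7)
The plan is to adapt Heath-Brown's large sieve for quadratic characters \cite{DRHB} and Soundararajan's argument \cite[Lemma 2.4]{sound1} to the Gaussian integer setting. The essential input is Onodera's quadratic large sieve in $K$ \cite{Onodera}, which provides the base $L^2$ estimate
\[
\sum_{\chi \in S(Q)} \Big|\sum_{\substack{n \equiv 1 \bmod (1+i)^3 \\ N(n) \leq N}} a_n \chi(n)\Big|^2 \ll_\epsilon (QN)^\epsilon (Q+N) \sum_{N(n) \leq N}|a_n|^2.
\]
All subsequent manipulations depend only on multiplicativity of $\leg{\cdot}{\cdot}$, quadratic reciprocity on $\mathcal O_K$, and Poisson summation on the Gaussian lattice, so the arithmetic architecture of the classical proofs transfers directly to $K$.

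For the first inequality, I would run the proof of \cite[Corollary~2]{DRHB} verbatim in $K$. One smooths the condition $N(m)\le Q$, expands the square, and swaps the order of summation so that the character sum in $m$ appears inside a sum over pairs $(n_1,n_2)$. Separating the diagonal contribution $n_1 n_2=\square$ produces precisely the squareful pair sum on the right-hand side, weighted by $Q+N$; the off-diagonal contribution is handled by Poisson summation in $m$ (using the $\widetilde{e}$-transform on $\mathcal O_K$) followed by an application of Onodera's inequality above.

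For the second inequality, I would decompose $m=m_1m_2^2$ with $m_1$ square-free and $m_2\in G$. Since $\leg{m_1m_2^2}{n}$ vanishes unless $(n,m_2)=1$, the outer sum rearranges as
\[
\sum_{N(m_2)^2\le M}\frac{1}{N(m_2)}\sum_{\substack{m_1 \text{ square-free} \\ N(m_1)\le M/N(m_2)^2}}\Big|\sum_{\substack{N(n)\le N \\ (n,m_2)=1}}a_n\chi_{m_1}(n)\Big|^2.
\]
Applying the first inequality to each inner sum with $Q=M/N(m_2)^2$, together with the bound
\[
\sum_{\substack{N(n_1),N(n_2)\le N \\ n_1n_2=\square}}|a_{n_1}a_{n_2}|\ll N^{1+\varepsilon}
\]
(obtained by parametrizing $n_1=da^2$, $n_2=db^2$ with $(a,b)=1$ and using $|a_n|\ll N(n)^\varepsilon$) reduces the problem to estimating $\sum_{N(m_2)\le\sqrt M}N(m_2)^{-1}(M/N(m_2)^2+N)N^{1+\varepsilon}$, which is $\ll (MN)^\varepsilon N(M+N)$ after separating the $M$ and $N^2\log M$ contributions.

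The main obstacle is not the combinatorics, which follow the classical templates, but the bookkeeping required to import each ingredient to $K$: verifying that Onodera's inequality fits precisely into the role played by Heath-Brown's Theorem~1, handling unit and $(1+i)$-factors correctly when flipping $\leg{m}{n}$ via quadratic reciprocity inside the Poisson step, and checking that the primary/square-free conventions of Section~\ref{sec2.4} are compatible with the decomposition $m=m_1m_2^2$ (in particular that $m_1\in G$ when $m\in G$ and $m_2\in G$).
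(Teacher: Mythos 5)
Your overall architecture---Onodera's large sieve as the analytic input, combined with the reductions of \cite[Corollary 2]{DRHB} and \cite[Lemma 2.4]{sound1}---is exactly what the paper intends (it gives no further detail, citing precisely these sources), and your treatment of the second inequality is correct: decompose $m=m_1m_2^2$, note $\leg{m}{n}=\leg{m_1}{n}$ for $(n,m_2)=1$, apply the first inequality with $Q=M/N(m_2)^2$, bound the diagonal pair sum by $N^{1+\varepsilon}$ via $n_1=da^2$, $n_2=db^2$, and sum over $m_2$. Two points in your treatment of the first inequality, however, need repair.

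First, the ``base estimate'' you attribute to Onodera is false as you have written it: with the inner sum over \emph{all} primary $n$ and $\sum|a_n|^2$ on the right, choosing $a_n$ supported on perfect squares makes the character sum essentially independent of $\chi$, and the left side is then of order $QN^{1/2}\sum|a_n|^2$, which exceeds $(Q+N)\sum|a_n|^2$. Onodera's theorem, like Heath-Brown's Theorem 1, requires the inner sum to run over \emph{square-free} $n$; removing that restriction at the cost of the $n_1n_2=\square$ pair sum is precisely the content of the first inequality of the lemma. Second, the derivation you sketch for that inequality (expand the square, isolate $n_1n_2=\square$, handle the off-diagonal by Poisson summation in $m$) does not produce the stated bound: estimating the off-diagonal character sums individually yields a contribution $\ll N^{1+\epsilon}\big(\sum_n|a_n|\big)^2$, which can be as large as $N^{2+\epsilon}\sum_n|a_n|^2$ and swamps $(Q+N)$. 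The $(Q+N)$ saving is the hard content of Onodera's (Heath-Brown's) theorem and is not recoverable by this naive expansion. The correct reduction, as in Heath-Brown's own deduction of Corollary 2 from Theorem 1, is purely combinatorial: write $n=n_1n_2^2$ with $n_1$ square-free so that $\chi_m(n)=\chi_m(n_1)$ when $(n_2,m)=1$, remove the coprimality condition by M\"obius inversion, apply the square-free large sieve to the resulting sums over $n_1$, and use the Cauchy--Schwarz inequality to reassemble the $n_1n_2=\square$ pair sum; no Poisson summation enters at this stage.
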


  Similarly, combining the above result of Onodera with the derivation of \cite[Theorem 2]{DRHB} and \cite[Lemma 2.5]{sound1}, we have the following result.
\begin{lemma}
\label{lem:2.3}
 Let $S(Q)$ be as in Lemma \ref{lem: estimates for character sums}. For any complex number $\sigma+it$ with $\sigma \geq \frac{1}{2}$, we have
\begin{align*}
\sum_{\chi \in S(Q)} |L(\sigma+it,\chi_{(1+i)^5d})| ^4
\ll& Q^{1+\varepsilon} (1+|t|^2)^{1+\varepsilon} \\
\sum_{\chi \in S(Q)} |L(\sigma+it,\chi_{(1+i)^5d})| ^2
\ll& Q^{1+\varepsilon} (1+|t|^2)^{1/2+\varepsilon}.
\end{align*}
\end{lemma}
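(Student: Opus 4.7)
The plan is to follow the approach of Heath-Brown \cite{DRHB} and Soundararajan \cite[Lemma 2.5]{sound1}, transplanted from $\mq$ to the Gaussian field $K=\mq(i)$. The two ingredients we need are already essentially in place: an approximate functional equation derived from \eqref{fneqnquad} (analogous to Lemma \ref{Lemma 3.2}, but now for $L^{2}$ and at a general point $\sigma+it$ rather than at the central value) and the quadratic large sieve provided by Lemma \ref{lem: estimates for character sums}.

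For the fourth moment, I would exploit $|L(\sigma+it,\chi)|^{4}=|L^{2}(\sigma+it,\chi)|^{2}$. Via Mellin inversion against a suitable smooth test function, I would write
\[
L^{2}(\sigma+it,\chi_{(1+i)^{5}d})=\sum_{n\equiv1\bmod(1+i)^{3}}\frac{a_{n}\,\chi_{(1+i)^{5}d}(n)}{N(n)^{\sigma+it}}\,F\!\Big(\frac{N(n)}{N}\Big)+\text{(dual sum)},
\]
where $|a_{n}|\ll d_{[i]}(n)$ with $d_{[i]}$ the Gaussian divisor function, $F$ is smooth with rapid decay past $1$, and the two pieces are balanced through \eqref{fneqnquad} by taking $N\asymp N(d)(1+|t|)^{2}$. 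Applying the first inequality of Lemma \ref{lem: estimates for character sums} to the truncated Dirichlet polynomial yields
\[
\sum_{\chi\in S(Q)}\Big|\sum_{n}\frac{a_{n}\chi(n)\,F(N(n)/N)}{N(n)^{\sigma+it}}\Big|^{2}\ll_{\varepsilon}(QN)^{\varepsilon}(Q+N)\!\sum_{\substack{n_{1}n_{2}=\square\\ N(n_{i})\ll N}}\frac{d_{[i]}(n_{1})d_{[i]}(n_{2})}{N(n_{1})^{\sigma}N(n_{2})^{\sigma}}.
\]
Parametrizing the diagonal by $n_{1}=am^{2}$, $n_{2}=ak^{2}$ with $a$ square-free and $m,k$ primary, the inner sum factors into zeta-like sums that are $\ll N^{\varepsilon}$ for every $\sigma\geq 1/2$ (here the cutoff $F$ is essential at $\sigma=1/2$ to tame what would otherwise be a logarithmic divergence at the tail). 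Since $N\asymp Q(1+|t|)^{2}$ dominates $Q$, the claimed bound $Q^{1+\varepsilon}(1+|t|^{2})^{1+\varepsilon}$ follows. The dual sum is treated identically by symmetry of \eqref{fcneqnforxi}.

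The second moment estimate follows the same template with $L$ replacing $L^{2}$: the approximate functional equation now has length $N\asymp\sqrt{N(d)}(1+|t|)$, the coefficients satisfy $|a_{n}|\ll 1$, and the diagonal calculation produces the advertised extra factor $(1+|t|^{2})^{1/2+\varepsilon}$ instead of $(1+|t|^{2})^{1+\varepsilon}$.

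The step I expect to be most delicate is the diagonal estimate. Keeping track of the primary condition $n_{i}\equiv 1\bmod (1+i)^{3}$, the square-pair constraint $n_{1}n_{2}=\square$, and the Gaussian divisor function simultaneously requires careful arithmetic in $\mathcal O_{K}$; one must also verify that the smooth weight $F(N(n)/N)$ truly localizes the sum so that no spurious logarithmic factors from the critical line $\sigma=1/2$ spoil the $Q^{\varepsilon}$ savings. Everything else --- setting up the approximate functional equation via the contour shift in Lemma \ref{Lemma 3.2}, bounding absolute values, and invoking Lemma \ref{lem: estimates for character sums} --- is a routine adaptation of \cite{DRHB,sound1}.
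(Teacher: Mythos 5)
Your proposal is correct and is essentially the argument the paper intends: the paper gives no proof of Lemma \ref{lem:2.3} beyond citing the derivations of \cite[Theorem 2]{DRHB} and \cite[Lemma 2.5]{sound1} combined with Onodera's large sieve, and those derivations are exactly your scheme of an approximate functional equation for $L^2$ (resp.\ $L$) of length $N\asymp N(d)(1+|t|)^2$ (resp.\ $\sqrt{N(d)}(1+|t|)$) fed into the quadratic large sieve of Lemma \ref{lem: estimates for character sums}, with the diagonal contributing only $N^{\varepsilon}$.
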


\subsection{Poisson summation}
\label{sec Poisson}

   We recall the following two dimensional Poisson summation formula, which follows from \cite[Lemma 2.7, Corollary 2.8]{G&Zhao4}.
\begin{lemma}
\label{Poissonsumformodd} Let $n \in \mathcal{O}_K$ be primary and $\leg {\cdot}{n}$ be the quadratic residue symbol modulo $n$. For any smooth function $W:\mr^{+} \rightarrow \mr$ of compact support,  we have for $X>0$,
\begin{align*}
   \sum_{\substack {m \in \mathcal{O}_K \\ (m,1+i)=1}}\leg {m}{n} W\left(\frac {N(m)}{X}\right)=\frac {X}{2N(n)}\leg {1+i}{n}\sum_{k \in
   \mathcal{O}_K}(-1)^{N(k)} g(k,n)\widetilde{W}\left(\sqrt{\frac {N(k)X}{2N(n)}}\right),
\end{align*}
   where
\begin{align}
\label{Wtdef}
   \widetilde{W}(t) =& \int\limits^{\infty}_{-\infty}\int\limits^{\infty}_{-\infty}W(N(x+yi))\widetilde{e}\left(- t(x+yi)\right)\dif x \dif y, \quad t \geq 0.
\end{align}
\end{lemma}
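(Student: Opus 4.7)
The plan is to apply two-dimensional Poisson summation on the lattice $L=(1+i)n\mathcal{O}_K\subset\mathbb{C}$: since $n$ is primary (hence odd), the combined modulus $(1+i)n$ lets the Chinese remainder theorem detect simultaneously the residue of $m$ modulo $n$ and the condition $(m,1+i)=1$. First I would split the sum by residue classes,
\begin{align*}
\sum_{\substack{m\in\mathcal{O}_K\\(m,1+i)=1}}\leg{m}{n}W\left(\frac{N(m)}{X}\right)=\sum_{\substack{a\bmod(1+i)n\\a\equiv 1\bmod(1+i)}}\leg{a}{n}\sum_{m\equiv a\bmod(1+i)n}W\left(\frac{N(m)}{X}\right),
\end{align*}
using $\mathcal{O}_K/(1+i)\cong\mathbb{F}_2$ to rewrite the coprimality condition as $a\equiv 1\bmod(1+i)$.

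Next, the lattice $L$ has covolume $N((1+i)n)=2N(n)$ and, with respect to the additive character $\widetilde{e}$, dual lattice $L^{*}=((1+i)n)^{-1}\mathcal{O}_K$ (since $\mathcal{O}_K$ is self-dual for this pairing). Poisson summation therefore gives
\begin{align*}
\sum_{m\equiv a\bmod(1+i)n}W\!\left(\frac{N(m)}{X}\right)=\frac{1}{2N(n)}\sum_{k\in L^{*}}\widetilde{e}(-ak)\,\widehat{f}(k),\qquad f(z)=W(N(z)/X).
\end{align*}
Because $f$ is radial, a change of variable $z\mapsto\sqrt{X}z$ together with rotational invariance of the Fourier transform identifies $\widehat{f}(k)=X\widetilde{W}(\sqrt{X}|k|)$, with $\widetilde{W}$ as in \eqref{Wtdef}. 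Writing $k=k'/((1+i)n)$ with $k'\in\mathcal{O}_K$ and interchanging the order of summation, the original sum becomes
\begin{align*}
\frac{X}{2N(n)}\sum_{k'\in\mathcal{O}_K}T(k')\,\widetilde{W}\!\left(\sqrt{\frac{N(k')X}{2N(n)}}\right),\qquad T(k')=\sum_{\substack{a\bmod(1+i)n\\a\equiv 1\bmod(1+i)}}\leg{a}{n}\widetilde{e}\!\left(-\frac{ak'}{(1+i)n}\right).
\end{align*}

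The main step is to identify $T(k')$ with $(-1)^{N(k')}\leg{1+i}{n}g(k',n)$. I would parametrise $a=1+(1+i)b$ with $b\bmod n$, then substitute $c\equiv 1+(1+i)b\bmod n$, so that $\leg{1+(1+i)b}{n}=\leg{c}{n}$ and $b\equiv(c-1)u\bmod n$, where $u$ is any lift of $(1+i)^{-1}\bmod n$. Collecting the phases converts $T(k')$ into $\widetilde{e}(k'v/(1+i))\cdot g(-uk',n)$ with $v=(u(1+i)-1)/n\in\mathcal{O}_K$. The primary condition $n\equiv 1\bmod(1+i)^{3}$ forces $v\equiv 1\bmod(1+i)$, and a direct computation from the definition of $\widetilde{e}$ yields $\widetilde{e}(k'/(1+i))=(-1)^{a+b}=(-1)^{N(k')}$ for $k'=a+bi$. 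Lemma \ref{Gausssum}(i) gives $g(-uk',n)=\leg{-u}{n}g(k',n)$, and one verifies that $\leg{u}{n}=\leg{1+i}{n}$, while $\leg{-1}{n}=1$ because every odd prime $\varpi\in\mathcal{O}_K$ satisfies $N(\varpi)\equiv 1\bmod 4$. Assembling these pieces produces the claimed formula for $T(k')$ and hence the lemma. The principal obstacle is the bookkeeping: keeping careful track of how the Gauss sum $g(k',n)$, the character value $\leg{1+i}{n}$, and the parity sign $(-1)^{N(k')}$ all emerge from the interaction between the shift $a=1+(1+i)b$ and the division by $1+i$ in the exponential.
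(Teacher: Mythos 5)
Your argument is correct and is the standard derivation of this formula (the paper itself does not prove the lemma but cites \cite[Lemma 2.7, Corollary 2.8]{G&Zhao4}, where it is established in essentially this way): splitting into residue classes modulo $(1+i)n$, applying two-dimensional Poisson summation on the self-dual lattice $\mathcal O_K$, and evaluating the complete character sum $T(k')$ as $(-1)^{N(k')}\leg{1+i}{n}g(k',n)$. All the key identifications check out, including $\widetilde e(k'/(1+i))=(-1)^{N(k')}$ and $\leg{u}{n}=\leg{1+i}{n}$, $\leg{-1}{n}=1$; the only cosmetic points are that the sign in your Poisson phase $\widetilde e(\pm ak)$ is immaterial because $g(-k',n)=g(k',n)$ and $\widetilde W$ depends only on $N(k')$, and that only the oddness of $n$ (not the full primary condition) is needed to get $v\equiv 1\bmod(1+i)$.
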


  When applying the above lemma in the proof of our result, we are led to consider the behaviors of a particular function. In the rest of this section, we include a result on this. We begin by defining
\begin{equation}
\label{Fdef}
  F_{y}(t) = \Psi(t) W_{\delta,\tau}\leg{\pi^2y}{2^5Xt},
\end{equation}
  where $\Psi(t)$ is a smooth function compacted in $[1,2]$ and $W_{\delta,\tau}$ is given in \eqref{Wdt}.

 The function that we are interested is then defined for $\xi>0$ and $\Re(w)>0$ by
\begin{equation}
\label{h}
h(\xi,w) = \int_0^{\infty} \widetilde{F}_{t}\left( \left(\frac{\xi}{t} \right )^{1/2} \right) t^{w-1} \,dt.
\end{equation}
  Thus, $h(\xi,w)$ is the Mellin transform of the function $\widetilde{F}_{t}\left( (\xi/t )^{1/2} \right)$. Here we recall that the Mellin transform $\widehat g(s)$ of a function $g$ is given by
\begin{align*}
 \widehat g(s) = \int_0^\infty g(t) t^{s-1} dt.
\end{align*}
  For further reference, we note that if we further assume that $g$ is support in $[1,2]$, then integration by parts implies that for $\Re(s)> 0$,  we have
\begin{align}
\label{gsest}
|\widehat g(s)| \ll \frac{2^{\Re(s)}}{|s|^n} g_{(n)},
\end{align}
  where we define for integers $n \ge 0$,
$$
g_{(n)} = \max_{0\le j\le n} \int_1^2 |g^{(j)}(t)|dt.
$$

  Now, we have the following result concerning analytical properties of $h(\xi,w)$.
\begin{lemma}\label{lem: properties of h(xi,w)}
Let $F_t$ be defined by \eqref{Fdef} and let $\xi >0$. The function $h(\xi,w)$ defined in \eqref{h} is an entire function of $w$ in $1 \geq \Re(w)>-1+|\Re(\tau)|$ such that for any $c$ with $1+ \Re(w)> c>\max(|\Re(\tau)|,\Re(w))$,
\begin{align}
\label{eq: integral in lemma for h(xi,w)}
\begin{split}
  h(\xi,w)
=&  \widehat{\Psi}(1+w) \xi^{w} \frac {\pi }{2\pi i}
\int\limits\limits_{(c)}\left(\frac{2^{5/2}}{\pi}\right)^{2s}
\Gamma_\delta(s) \left(\frac {X}{\xi} \right )^s \pi^{-2s+2w}\frac{\Gamma (s-w)}{\Gamma (1-s+w)} \frac{2s}{s^2 -\tau^2} ds.
\end{split}
\end{align}
 Moreover, when $1\geq \Re(w)> -1+|\Re(\tau)|$, we have
\begin{align}
\label{hbound}
h(\xi,w) \ll (1+|w|)^{3-2\Re(w)} \exp \Bigg( -\frac{1}{10}\frac{\xi^{1/4}}{X^{1/4}(|w|+1)^{1/2}}\Bigg) \xi^{\Re(w)} |\widehat{\Psi}(1+w)|.
\end{align}
\end{lemma}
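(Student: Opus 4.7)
The plan is to derive the Mellin--Barnes representation \eqref{eq: integral in lemma for h(xi,w)} directly from the definition of $h$, from which both the holomorphy and the bound will follow by contour manipulations together with Stirling's formula.

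\textbf{Deriving the formula.} I will insert the representation \eqref{Wdt} into \eqref{Fdef}, so that
$F_t(u) = \Psi(u)\cdot\frac{1}{2\pi i}\int_{(c_0)}\Gamma_\delta(s)\bigl(2^5 Xu/(\pi^2 t)\bigr)^s\frac{2s}{s^2-\tau^2}ds$
for some $c_0 > |\Re(\tau)|$. Computing $\widetilde{F}_t$ from \eqref{Wtdef} in polar coordinates $(x,y) = (\rho\cos\theta,\rho\sin\theta)$ collapses the angular integral via $\int_0^{2\pi}\widetilde{e}(-r\rho e^{i\theta})d\theta = 2\pi J_0(2\pi r\rho)$, leaving a one-dimensional integral in $\rho$. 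Substituting $r = (\xi/t)^{1/2}$ into \eqref{h} and evaluating the resulting $t$-integral via the classical Mellin transform $\int_0^\infty J_0(y) y^{\mu-1}dy = 2^{\mu-1}\Gamma(\mu/2)/\Gamma(1-\mu/2)$ decouples the $u$-integral, which reduces to $\widehat{\Psi}(1+w)$; collecting constants yields \eqref{eq: integral in lemma for h(xi,w)}. The interchange of integrals here is justified in the subregion $\Re(w) > |\Re(\tau)|-3/4$ by the rapid decay $W_{\delta,\tau}(x) \ll e^{-x^{1/2}}$ for large $x$ from \eqref{Wxlarge}, the bound $W_{\delta,\tau}(x) \ll x^{-|\Re(\tau)|}$ for small $x$ implicit in \eqref{Wxsmall}, and $|J_0(y)| \ll \min(1, y^{-1/2})$.

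\textbf{Holomorphy.} The integrand of \eqref{eq: integral in lemma for h(xi,w)} is meromorphic in $s$ with the poles of $\Gamma_\delta(s)\cdot 2s/(s^2-\tau^2)$ located at $s = -1/2 \pm \delta - n$ and $s = \pm\tau$, and poles of $\Gamma(s-w)$ at $s = w - n$ for $n\ge 0$. Choosing $c$ with $\max(|\Re(\tau)|, \Re(w)) < c < 1 + \Re(w)$ keeps the vertical line $(c)$ strictly to the right of every such singularity throughout the strip $1\ge \Re(w) > -1 + |\Re(\tau)|$, while Stirling gives absolute convergence along this line. Thus the right side of \eqref{eq: integral in lemma for h(xi,w)} is holomorphic in $w$ throughout the claimed strip, and this also provides the analytic continuation from the smaller subregion of the previous step.

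\textbf{The bound.} For \eqref{hbound}, I plan to shift the contour to $\Re(s) = c$ with $c$ to be optimized. Stirling gives $|\Gamma_\delta(c+iv)| \ll (|v|+c)^{2c-1} e^{-\pi|v|-2c}$, while the reflection identity $\Gamma(s-w)/\Gamma(1-s+w) = \Gamma(s-w)^2 \sin(\pi(s-w))/\pi$ combined with Stirling yields $|\Gamma(s-w)/\Gamma(1-s+w)| \ll (|v-\Im(w)| + c - \Re(w))^{2(c-\Re(w))-1} e^{-2(c-\Re(w)) + \pi|v-\Im(w)|}$. The troublesome factor $e^{\pi|v-\Im(w)|}$ is absorbed by the $e^{-\pi|v|}$ coming from $\Gamma_\delta$, and concentrating the integral near $v = 0$ leads to a bound of the shape $(X/\xi)^c\,c^{2c-1}(|w|+c)^{2(c-\Re(w))-1} e^{-4c+2\Re(w)}$ multiplied by the prefactor $\xi^{\Re(w)} \pi^{2\Re(w)}|\widehat{\Psi}(1+w)|$. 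Choosing $c \asymp \xi^{1/4}/(X^{1/4}(|w|+1)^{1/2})$ balances $(X/\xi)^c$ against the Stirling factors and produces exponential decay of the form $\exp(-\tfrac{1}{10}\xi^{1/4}/(X^{1/4}(|w|+1)^{1/2}))$, with the polynomial $(1+|w|)^{3-2\Re(w)}$ emerging as the leading correction. The main obstacle will be in the uniform Stirling bookkeeping for $\Gamma(s-w)/\Gamma(1-s+w)$ as $c$ and $|w|$ vary together and in identifying $(|w|+1)^{1/2}$ as the correct scale inside the exponential; the slack exponent $\tfrac{1}{10}$ gives room for the implicit constants accumulating through this optimization.
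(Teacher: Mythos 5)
Your proposal follows essentially the same route as the paper: the paper also computes $\widetilde{F}_t$ in polar coordinates and evaluates the resulting angular and radial integrals via the classical formula $\int_0^{\pi/2}(\sin\theta)^{-u}d\theta\int_0^\infty\cos(r)r^{u-1}dr=\tfrac{\pi}{2}2^{u-1}\Gamma(u/2)/\Gamma(1-u/2)$ (equivalent to your Bessel-function phrasing), factors out $\widehat\Psi(1+w)$, and then proves \eqref{hbound} by Stirling's formula with the same choice $c=\max(2,\xi^{1/4}/(X^{1/4}(1+|w|)^{1/2}))$. The only caveat is a small bookkeeping slip in your Stirling estimate for $\Gamma(s-w)/\Gamma(1-s+w)$: the factor $e^{\pi|v-\Im(w)|}$ from $\sin(\pi(s-w))$ is already cancelled by the $e^{-\pi|v-\Im(w)|}$ coming from $|\Gamma(s-w)|^2$, so the ratio is polynomially bounded on its own and no absorption by $\Gamma_\delta$ is needed (which is fortunate, since that absorption would fail for large $|\Im(w)|$).
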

\begin{proof}
  Notice that for any smooth function $W$, we can evaluate the function $\widetilde{W}(t)$ defined in \eqref{Wtdef} in polar coordinates as
\begin{align*}
     \widetilde{W}(t) =& 4\int\limits^{\pi/2}_0\int\limits^{\infty}_0\cos (2\pi t r\sin \theta)W(r^2) \ r \dif r \dif \theta.
\end{align*}
  Using this and the definition of $W_{\delta,\tau}(t)$ in \eqref{Wdt}, we deduce that, for $1+ \Re(w)> c>\max(|\Re(\tau)|,\Re(w))$,
\begin{align*}
& h(\xi,w) \\
=& 4\int\limits_0^{\infty} \int\limits^{\pi/2}_0 \int\limits^{\infty}_0\cos (2\pi \left(\frac{\xi}{t} \right )^{1/2} r \sin \theta)\Psi(r^2) W_{\delta,\tau}\leg{\pi^2t}{2^5Xr^2} \ r\dif r \dif \theta t^{w-1} \,dt  \\
=& 2\int\limits_0^{\infty} \int\limits^{\pi/2}_0 \int\limits^{\infty}_0\cos (r)\Psi((\frac {rt^{1/2}}{2\pi \xi^{1/2} \sin \theta})^2) W_{\delta,\tau}\left( \leg{\pi^2t}{2^5X}(\frac {rt^{1/2}}{2\pi \xi^{1/2} \sin \theta})^{-2} \right) \ \dif (\frac {rt^{1/2}}{2\pi \xi^{1/2} \sin \theta})^{2} \dif \theta t^{w-1} \,dt  \\
=& 2 \int\limits^{\pi/2}_0 \int\limits^{\infty}_0\cos (r) W_{\delta,\tau}\left( \leg{\pi^2}{2^5X}(\frac {r}{2\pi \xi^{1/2} \sin \theta})^{-2} \right) \ \dif (\frac {r}{2\pi \xi^{1/2} \sin \theta})^{2} \dif \theta \int\limits_0^{\infty} \Psi((\frac {rt^{1/2}}{2\pi \xi^{1/2} \sin \theta})^2) t^{w+1} \,\frac {dt}{t}  \\
=& 2 \widehat{\Psi}(1+w) \int\limits^{\pi/2}_0 \int\limits^{\infty}_0\cos (r) W_{\delta,\tau}\left( \leg{\pi^2}{2^5X}(\frac {r}{2\pi \xi^{1/2} \sin \theta})^{-2} \right)(\frac {r}{2\pi \xi^{1/2} \sin \theta})^{-2w-2} \ \dif (\frac {r}{2\pi \xi^{1/2} \sin \theta})^{2} \dif \theta   \\
=& 2 \widehat{\Psi}(1+w) \xi^{w} \int\limits^{\pi/2}_0 \int\limits^{\infty}_0\cos (r) \frac {1}{2\pi i}
\int\limits\limits_{(c_s)}\left(\frac{2^{5/2}}{\pi}\right)^{2s}
\Gamma_\delta(s)\left( \frac{1}{X}(\frac {r}{2\pi \xi^{1/2} \sin \theta})^{-2} \right)^{-s}(\frac {r}{2\pi \sin \theta})^{-2w-2} \ \frac{2s}{s^2 -\tau^2} ds \ \dif (\frac {r}{2\pi  \sin \theta})^{2} \dif \theta   \\
=& 4 \widehat{\Psi}(1+w) \xi^{w} \frac {1}{2\pi i}
\int\limits\limits_{(c_s)}\left(\frac{2^{5/2}}{\pi}\right)^{2s}
\Gamma_\delta(s) \left(\frac {X}{\xi} \right )^s (2\pi)^{-2s+2w} \int\limits^{\pi/2}_0 \int\limits^{\infty}_0\cos (r)r^{2s-2w-1}  \left(  \sin \theta \right)^{-2s+2w} \frac{2s}{s^2 -\tau^2} ds \ \dif r \dif \theta .
\end{align*}

 We apply the relation (see \cite[Section 2.4]{Gao1})
\begin{align*}
\int\limits^{\pi/2}_0 (\sin \theta )^{-u} \dif \theta
\int\limits^{\infty}_0\cos (r)r^{u}\frac {\dif r}{r}=\frac {\pi}{2}2^{u-1}\frac{\Gamma \leg{u}{2}}{\Gamma\leg{2-u}{2}},
\end{align*}
  to see that
\begin{align*}
\int\limits^{\pi/2}_0 (\sin \theta )^{-(2s-2w)} \dif \theta
\int\limits^{\infty}_0\cos (r)r^{2s-2w}\frac {\dif r}{r}=\frac {\pi}{2}2^{2s-2w-1}\frac{\Gamma (s-w)}{\Gamma (1-s+w)}.
\end{align*}

  Substituting this into the above expression for $h(\xi,w)$, we readily derive \eqref{eq: integral in lemma for h(xi,w)} by noticing that we can now take $c_s>\max(|\Re(\tau)|,\Re(w))$. This also implies that
$h(\xi,w)$ is an entire function of $w$ for $1 \geq \Re(w)>-1+|\Re(\tau)|$.

 It remains to establish \eqref{hbound}. For this, we set $c=\Re(s)$ and we may assume that $c \geq 2$ here. By apply Stirling's formula given in \cite[(5.112)]{iwakow}, we see that
\begin{align}
\label{gbound'}
\begin{split}
 & \left(\frac{2^{5/2}}{\pi}\right)^{2s}
\Gamma_\delta(s)  (\pi)^{-2s+2w}\frac{\Gamma (s-w)}{\Gamma (1-s+w)} \frac {2s}{s^2-\tau^2} \\
\ll & \left(\frac{2^{5/2}}{\pi^2}\right)^{2c}(1+|s|)^{2c-1}e^{-\pi|\Im(s)|}(1+|s-w|)^{2c-2\Re(w)-1} \\
 \ll &
(\frac {|s|}{e^{1/2}})^{2c-1}e^{-\pi|\Im(s)|}(1+|s|)^{2c-2\Re(w)-1} (1+|w|)^{2c-2\Re(w)-1}.
\end{split}
\end{align}

  This implies that
\begin{align*}
\begin{split}
& \int\limits\limits_{(c)}\left(\frac{2^{5/2}}{\pi}\right)^{2s}
\Gamma_\delta(s)(\pi)^{-2s+2w}\frac{\Gamma (s-w)}{\Gamma (1-s+w)} \frac {2s ds}{s^2-\tau^2}
\ll e^{-c}|c|^{4c-2\Re(w)-2}(1+|w|)^{2c-2\Re(w)-1}.
\end{split}
\end{align*}
  By taking
\begin{align*}
\begin{split}
 c=\max (2, \frac {\xi^{1/4}}{X^{1/4}(1+|w|)^{1/2}}),
\end{split}
\end{align*}
 we see that the bound given \eqref{hbound} follows.
\end{proof}

\subsection{Analytical behaviors of certain functions}
\label{sect: alybehv}

   Besides the function $h(\xi,w)$ considered in the previous section, we also need to know analytical behaviors of a few other functions that are needed in the paper.  We include several results in this section.
First, we note that the following result can be established similar to \cite[Lemma 5.3]{sound1}.
\begin{lemma}
\label{lem: nu-sum as an Euler product}
  Let $\alpha, l \in \mathcal O_K$ be primary and for each $k \in \mathcal O_K, k \neq 0$, we write $k$ uniquely by
\begin{equation}
\label{eq: defn of k1 and k2}
 k = k_1 k_2^2,
\end{equation}
 with $k_1$ square-free and $k_2 \in G$. For $\Re(s) >1+ |\Re (\delta)|$, we have
\begin{align*}
 & \sum_{\substack{n \equiv 1 \bmod {(1+i)^3} \\ (n,\alpha)=1}} \frac{r_{\delta}(n)}{N(n)^{s}}\cdot \frac {g(k,ln)}{N(n)^{1/2}} \ = \ L(s-\delta,\chi_{ik_1})L(s+\delta,\chi_{ik_1}) \prod_{\varpi \in G}\mathcal{G}_{\delta;\varpi}(s;k,l,\alpha) \\
 =: & \ L(s-\delta,\chi_{ik_1})L(s+\delta,\chi_{ik_1}) \mathcal{G}_{\delta}(s;k,l,\alpha).
\end{align*}
Here $\mathcal{G}_{\delta;\varpi}(s;k,l,\alpha)$ is defined by
\begin{align*}
\mathcal{G}_{\delta;\varpi}(s;k,l,\alpha) & =\begin{cases}
    \biggl( 1-\frac{1}{N(\varpi)^{s-\delta}}\leg{i k_1}{\varpi}
\biggr)\biggl( 1-\frac{1}{N(\varpi)^{s+\delta}}\leg{i k_1}{\varpi}
\biggr)  & \ \ \ \ \text{if }\varpi|2\alpha , \\
   \biggl( 1-\frac{1}{N(\varpi)^{s-\delta}}\leg{i k_1}{\varpi}
\biggr)\biggl( 1-\frac{1}{N(\varpi)^{s+\delta}}\leg{i k_1}{\varpi}
\biggr)\displaystyle\sum_{r=0}^{\infty} \frac{r_{\delta}(\varpi^r) }{N(\varpi)^{rs}}\frac{g(k, \varpi^{r+\text{\upshape{ord}}_\varpi(l)})}{N(\varpi)^{r/2}} & \ \ \ \ \text{if }\varpi \nmid 2\alpha.
\end{cases}
\end{align*}

Moreover, the function $\mathcal{G}_{\delta}(s;k,l,\alpha)$ is holomorphic for $\Re(s)>\frac{1}{2}+|\Re(\delta)|$ and satisfies the bound that uniformly for  $\Re(s) \ge \frac{1}{2} + |\Re(\delta)| + \epsilon$,
$$
| {\mathcal G}_{\delta}(s; k,l,\alpha)| \ll N(\alpha k)^{\epsilon}
N(l)^{\frac 12+\epsilon} N((l,k_2^2))^{\frac 12}.
$$
\end{lemma}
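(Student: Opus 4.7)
The plan is to adapt \cite[Lemma 5.3]{sound1} to the Gaussian setting by exploiting two multiplicative structures: $r_\delta$ is multiplicative on primary elements (being of divisor type), and $g(k, mn') = g(k,m) g(k,n')$ whenever $m, n'$ are coprime primary by Lemma \ref{Gausssum}(i). The conditions $n \equiv 1 \bmod (1+i)^3$ and $(n,\alpha) = 1$ force $(n, 2\alpha) = 1$, so expanding $\text{ord}_\varpi(ln) = \text{ord}_\varpi(l) + \text{ord}_\varpi(n)$ over primary primes $\varpi \nmid 2\alpha$ factors the Dirichlet series on the left as a product of local sums $\sum_{r \ge 0} r_\delta(\varpi^r) g(k, \varpi^{r + \text{ord}_\varpi(l)}) N(\varpi)^{-r(s+1/2)}$. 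To extract the two $L$-functions $L(s \pm \delta, \chi_{ik_1})$, I would multiply and divide each such local factor by the Euler factor $(1 - \chi_{ik_1}(\varpi) N(\varpi)^{\delta-s})^{-1}(1 - \chi_{ik_1}(\varpi) N(\varpi)^{-\delta-s})^{-1}$, which matches exactly the definition of $\mathcal{G}_{\delta;\varpi}$ for $\varpi \nmid 2\alpha$. At primes $\varpi \mid 2\alpha$ the $n$-sum is trivial, and the $\mathcal{G}_{\delta;\varpi}$ factor listed there merely removes the corresponding unwanted local $L$-factors.

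To verify holomorphy on $\Re(s) > 1/2 + |\Re(\delta)|$, I would examine the generic prime $\varpi \nmid 2\alpha k l$. By Lemma \ref{Gausssum}(ii) with $h = 0$, the Gauss sum $g(k,\varpi^r)$ vanishes for $r \ge 2$, and the local sum reduces to $1 + \chi_{ik_1}(\varpi) r_\delta(\varpi) N(\varpi)^{-s}$, using $\chi_{k_2^2}(\varpi) = 1$ (so $\chi_{ik}(\varpi) = \chi_{ik_1}(\varpi)$) and $r_\delta(\varpi) = N(\varpi)^\delta + N(\varpi)^{-\delta}$. A short expansion using $\chi_{ik_1}(\varpi)^2 = 1$ shows that the $\varpi$-th factor of $\mathcal{G}_\delta$ equals $1 + O\bigl(N(\varpi)^{2|\Re(\delta)| - 2\Re(s)}\bigr)$, so the product converges absolutely and uniformly for $\Re(s) \ge 1/2 + |\Re(\delta)| + \epsilon$, giving holomorphy together with an $O(1)$ contribution from these generic primes toward the bound.

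For the bound at the remaining primes, $\varpi \mid 2\alpha$ contribute bounded local factors totalling $\ll 2^{\omega(\alpha)} \ll N(\alpha)^\epsilon$. At primes $\varpi \mid kl$ with $\varpi \nmid 2\alpha$, the $r$-sum terminates once $r + \text{ord}_\varpi(l) \ge \text{ord}_\varpi(k) + 2$, so only finitely many terms appear. Bounding each via the explicit formulas of Lemma \ref{Gausssum}(ii), primes $\varpi \mid l$ contribute at most $N(\varpi)^{\text{ord}_\varpi(l)/2 + \epsilon}$ each (dominated by the odd $l = h+1$ case that gives $N(\varpi)^{l-1/2}$), yielding a total of $N(l)^{1/2 + \epsilon}$, while primes $\varpi \mid k$ contribute $N(\varpi)^\epsilon$ each, giving $N(k)^\epsilon$. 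The additional factor $N((l, k_2^2))^{1/2}$ arises at primes $\varpi \mid k_2$ with $\varpi \mid l$: there the parity alignment forces the odd $l = h+1$ branch of Lemma \ref{Gausssum}(ii) at larger $r$ than occurs for generic $\varpi$, contributing one extra power of $N(\varpi)^{1/2}$ per such prime.

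The main obstacle will be the parity-sensitive case analysis at primes $\varpi \mid (l, k)$: each of the five cases of Lemma \ref{Gausssum}(ii) can appear depending on whether $\text{ord}_\varpi(l) + r$ lies below, equals, or exceeds $\text{ord}_\varpi(k) + 1$, and on the parities of the relevant exponents. Obtaining the sharp exponent $N((l, k_2^2))^{1/2}$ rather than the cruder $N((l, k))^{1/2}$ relies on distinguishing the even $l = h+1$ case (giving $N(\varpi)^{l-1}$) from the odd $l = h+1$ case (giving $N(\varpi)^{l-1/2}$); the latter, larger by a half power, can occur only when $\text{ord}_\varpi(k)$ and $\text{ord}_\varpi(l) + r$ have the required parity, which in turn forces $\varpi \mid k_2$. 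Keeping track of this parity discipline precisely, rather than settling for a naive absolute value bound, is the delicate point.
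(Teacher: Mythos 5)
Your proposal is correct and is exactly what the paper intends: the paper supplies no proof of this lemma beyond asserting that it "can be established similar to \cite[Lemma 5.3]{sound1}," and your argument — Euler-product factorization via the multiplicativity of $r_\delta$ and of $g(k,\cdot)$ on coprime primary arguments, extraction of the two $L$-factors, and the parity case analysis from Lemma \ref{Gausssum}(ii) to get holomorphy and the bound — is precisely that adaptation. The only small imprecision is in attributing the factor $N((l,k_2^2))^{1/2}$ mainly to the odd $l=h+1$ branch of Lemma \ref{Gausssum}(ii); in fact the dominant local contribution $N(\varpi)^{\mathrm{ord}_\varpi(l)}$ typically comes from terms with $r+\mathrm{ord}_\varpi(l)\le \mathrm{ord}_\varpi(k)$ even, where $g=\varphi_{[i]}(\varpi^{r+\mathrm{ord}_\varpi(l)})$, and it is this parity constraint that forces $\varpi^{\mathrm{ord}_\varpi(l)}$ (up to one power of $\varpi$) to divide $k_2^2$ — but this does not change the bound.
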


  Our next two lemmas provide bounds for certain dyadic sums involving $\mathcal{G}_{\delta}$ and $h(\xi,w)$.
\begin{lemma}\label{lem: version of Lemma 5.5 of Sound}
Let $K, L \geq 1$ be two integers and let $k_2$ be defined in \eqref{eq: defn of k1 and k2}. For any sequence of complex numbers $\delta_{l}$ satisfying $|\delta_{l}| \ll N(\ell)^{\varepsilon}$ and $\delta_l=0$ when $(l,2\alpha)\neq 1$, we have for $\Re(w)=-\frac{1}{2}+|\Re(\delta)| +\varepsilon$,
\begin{equation*}
\sum_{K\leq N(k)< 2K}\frac{1}{N(k_2)} \left| \sum_{\substack{N(l)=L}}^{2L-1} \frac{\delta_{l}}{\sqrt{N(l)}} \mathcal{G}_{\delta}(1+w;k,l,\alpha)\right|^2 \ll_{\varepsilon} (N(\alpha) LK)^{\varepsilon} L(L+K).
\end{equation*}
\end{lemma}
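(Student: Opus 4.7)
My approach follows Soundararajan's proof of \cite[Lemma 5.5]{sound1}: the goal is to reduce the estimate to an application of the quadratic large sieve in Lemma \ref{lem: estimates for character sums}(ii). The key structural observation is that, away from primes dividing $2\alpha k_2 l$, the dependence of $\mathcal{G}_\delta(1+w; k,l,\alpha)$ on $k$ is encoded in the quadratic character $\chi_{ik_1}$ associated with the square-free part $k_1$ of $k$.

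I would begin by expanding $\mathcal{G}_\delta(1+w;k,l,\alpha)$ as an absolutely convergent Dirichlet series $\sum_n \beta(n;k,l,\alpha) N(n)^{-(1+w)}$ in the half-plane $\Re(w) > -1/2+|\Re(\delta)|$. A prime-by-prime calculation using Lemma \ref{Gausssum} to evaluate the Gauss sums appearing in the Euler factors of Lemma \ref{lem: nu-sum as an Euler product} shows that, at each prime $\varpi \nmid 2\alpha kl$, only the exponents $r_\varpi(n) \in \{0,2,3\}$ contribute to $\beta$, and the character $\chi_{ik_1}(\varpi)$ appears precisely when $r_\varpi(n)=3$. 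Globally this yields a factorization
\begin{align*}
\beta(n;k,l,\alpha) = \chi_{ik_1}(\nu(n))\,\tilde\gamma(n;k_2,l,\alpha),
\end{align*}
where $\nu(n) \in \mathcal{O}_K$ is a square-free ``twisting'' element depending only on $n$, and $|\tilde\gamma(n;k_2,l,\alpha)| \ll (N(n)N(l))^\varepsilon N((n,k_2^2 l))^{1/2}$.

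Next I would substitute this expansion, dyadically partition the $n$-sum into scales $N(n)\asymp N$, and smoothly truncate at $N \ll (KL)^C$ (using the decay coming from Lemma \ref{lem: nu-sum as an Euler product} to discard the tail). This reduces the task, for each dyadic scale, to estimating
\begin{align*}
\sum_{K\le N(k)<2K}\frac{1}{N(k_2)}\left|\sum_{N(n)\asymp N}\chi_{ik_1}(\nu(n))\,\frac{G_N(n,k_2)}{N(n)^{1+w}}\right|^2,\quad G_N(n,k_2):=\sum_{N(l)\asymp L}\frac{\delta_l\,\tilde\gamma(n;k_2,l,\alpha)}{\sqrt{N(l)}}.
\end{align*}
Quadratic reciprocity in $K=\mq(i)$ lets one rewrite $\chi_{ik_1}(\nu(n))$ as $\leg{k}{\nu(n)}$ times a factor depending only on $n$, up to discrepancies on the locus $(k_2,\nu(n))>1$ that are controlled by divisor bounds. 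Applying Lemma \ref{lem: estimates for character sums}(ii) with $M=2K$ and character argument $\nu(n)$ of norm $\le N$ then gives a bound of the form $(NK)^\varepsilon(K+N)$ times a weighted sum over $\nu$ and $k_2$, which is handled by Cauchy--Schwarz on $G_N$.

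Summing the dyadic contributions with the weight $N^{-1-\Re(w)}$ (convergent since $\Re(w)>-1+|\Re(\delta)|+\varepsilon$) and inserting the trivial estimate on $\tilde\gamma$ produces the desired bound $(N(\alpha)LK)^\varepsilon L(L+K)$. The principal technical obstacle will be the careful treatment of the ``bad'' primes $\varpi \mid 2\alpha k_2 l$, where the clean character factorization above fails: one has to split the $n$-sum according to $(n,2\alpha k_2 l)$ and verify, via uniform bounds on the local Euler factors, that these contributions cost only an $\varepsilon$-loss and not a positive power of $K$ or $L$.
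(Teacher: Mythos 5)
There is a genuine gap, and it lies at the heart of your argument: the claimed factorization $\beta(n;k,l,\alpha)=\chi_{ik_1}(\nu(n))\,\tilde\gamma(n;k_2,l,\alpha)$ with $\nu(n)$ depending only on $n$ is false, because the character $\chi_{ik}$ must also be evaluated at a divisor of $l$. Concretely, at a prime $\varpi$ with $\varpi\nmid k$ and $\varpi\| l$, Lemma \ref{Gausssum}(ii) forces $r=0$ in the local factor of Lemma \ref{lem: nu-sum as an Euler product}, and the surviving term is $g(k,\varpi)=\leg{ik}{\varpi}N(\varpi)^{1/2}$ — a constant (in the Dirichlet variable) multiplying the entire local Euler factor. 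Globally, every coefficient $\beta(n;k,l,\alpha)$ therefore carries the factor $\leg{ik}{m}\sqrt{N(m)}$, where $m$ is (essentially) the square-free part of $l$ coprime to $k$. This is exactly the identity the paper isolates: writing $l=gm$ with $g\mid a(k)$, $(m,k)=1$, one has $\mathcal{G}_{\delta}(1+w;k,l,\alpha)=\sqrt{N(m)}\leg{ik}{m}\prod_{\varpi\mid m}\bigl(1+\tfrac{r_{\delta}(\varpi)}{N(\varpi)^{1+w}}\leg{ik_1}{\varpi}\bigr)^{-1}\mathcal{G}_{\delta}(1+w;k,g,\alpha)$, and the large sieve (second part of Lemma \ref{lem: estimates for character sums}) is then applied to the bilinear pair $(k,m)$, i.e.\ with $k$ as moduli and the sum over $m\mid l$ of length $\approx L/N(g)$ as the character sum. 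That bilinear cancellation between $k$ and $l$ is the entire content of the lemma: the trivial bound on the $l$-sum (via $|\mathcal{G}_{\delta}|\ll N(l)^{1/2+\epsilon}N((l,k_2^2))^{1/2}$) gives $\ll K L^{2+\varepsilon}$ for the whole expression, and to reach $L(L+K)$ one must save a factor $\min(K,L)$ coming from the oscillation of $\leg{ik}{m}$.

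Your plan discards precisely this saving. By placing the entire $l$-sum inside $G_N(n,k_2)$ and bounding it "by Cauchy--Schwarz," you estimate the $l$-sum with no cancellation, and then apply the large sieve to the pair $(k,\nu(n))$, where $n$ is only the auxiliary Dirichlet-series variable of $\mathcal{G}_{\delta}$; no choice of truncation $N$ can recover the lost factor $\min(K,L)$. A secondary (repairable) inaccuracy: the "bad" primes are those dividing $2\alpha k l$, not $2\alpha k_2 l$ — e.g.\ at $\varpi\|k_1$, $\varpi\nmid l$, the local factor is $1-r_{\delta}(\varpi^2)N(\varpi)^{-2s}$, which depends on $k_1$ in a way not captured by $k_2$ or by $\chi_{ik_1}(\nu(n))$. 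To fix the proof you should follow the structure above: restrict to $l=gm$ with $g\mid a(k)$ (otherwise $\mathcal{G}_{\delta}$ vanishes), extract $\leg{ik}{m}$, expand the remaining $m$-dependent Euler product by Möbius to make the coefficients of the character sum free of $k_1$, relabel $k=fb(g)$, and only then invoke the large sieve.
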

\begin{proof}
We write for any $k \neq 0, k \in \mathcal O_K$ as $k=u_k \displaystyle \prod_{\varpi \in G, \ a_i\geq 1} \varpi_i^{a_i}$ with $u_k \in U_K$. We define for the $\varpi_i$ appearing in this product,
\begin{equation}\label{eq: defn of a(k) and b(k)}
a(k)=\prod_i \varpi_i^{a_i+1} \ \ \ \ \text{and} \ \ \ \ b(k)=\prod_{a_i=1} \varpi_i \prod_{a_i\geq 2}\varpi_i^{a_i-1}.
\end{equation}
 It follows from the definition of $\mathcal{G}_{\delta}$ in Lemma~\ref{lem: nu-sum as an Euler product} and part (ii) of Lemma \ref{Gausssum} that we may write $l=gm$ with $g|a(k), g \in G$, $(m,k)=1$ and $m$ square-free, for otherwise we have $\mathcal{G}_{\delta}(1+w;k,l,\alpha)=0$. Further, we see that when $(l,2\alpha)=1$,
\begin{equation*}
\mathcal{G}_{\delta}(1+w;k,l,\alpha)=\sqrt{N(m)}\left( \frac{ik}{m}\right) \prod_{\substack{\varpi \in G \\ \varpi |m}} \left( 1+ \frac{r_{\delta}(\varpi)}{N(\varpi)^{1+w}}\left( \frac{i k_1}{\varpi}\right)\right)^{-1} \mathcal{G}_{\delta}(1+w;k,g,\alpha).
\end{equation*}
 Applying the above and the Cauchy-Schwarz inequality, we deduce that
\begin{equation*}
\sum_{K\leq N(k)< 2K}\frac{1}{N(k_2)} \left| \sum_{\substack{N(l)=L}}^{2L-1} \frac{\delta_{l}}{\sqrt{N(l)}} \mathcal{G}_{\delta}(1+w;k,l,\alpha)\right|^2 \ll_{\varepsilon} K^{\varepsilon} \sum_{K\leq N(k)< 2K}\frac{1}{N(k_2)}  \sum_{\substack{g|a(k) \\ N(g)<2L}} \Psi(k,g),
\end{equation*}
where
\begin{equation*}
\Psi(k,g) = \Bigg| \sum_{\substack{\frac{L}{N(g)}\leq  N(m) <\frac{2L}{N(g)} }} \frac{\mu^2_{[i]}(m) \delta_{gm}}{\sqrt{N(g)}}  \mathcal{G}_{\delta}(1+w;k,g,\alpha) \left( \frac{i k}{m}\right) \prod_{\substack{\varpi \in G \\ \varpi |m}} \left( 1+ \frac{r_{\delta}(\varpi)}{N(\varpi)^{1+w}}\left( \frac{i k_1}{\varpi}\right)\right)^{-1} \Bigg|^2.
\end{equation*}
  We use the bound for $\mathcal{G}_{\delta}$ given in Lemma~\ref{lem: nu-sum as an Euler product} in the above expression to see that
\begin{equation}\label{eq: Psi 1 after factoring out G0}
\Psi(k,g) \ll_{\varepsilon} (N(\alpha) K)^{\varepsilon}N(g)^{1+\varepsilon}\Bigg| \sum_{\substack{\frac{L}{N(g)}\leq N(m)<\frac{2L}{N(g)} }} \mu^2_{[i]}(m) \delta_{gm}  \left( \frac{i k}{m}\right) \prod_{\substack{\varpi \in G \\ \varpi |m}} \left( 1+ \frac{r_{\delta}(\varpi)}{N(\varpi)^{1+w}}\left( \frac{i k_1}{\varpi}\right)\right)^{-1} \Bigg|^2.
\end{equation}
 Note that as $\left( \frac{i k}{m}\right)\neq 0$,
\begin{align*}
\prod_{\substack{\varpi \in G \\ \varpi |m}}  \left( 1+ \frac{r_{\delta}(\varpi)}{N(\varpi)^{1+w}}\left( \frac{i k_1}{\varpi }\right)\right)^{-1}
& = \prod_{\substack{\varpi \in G \\ \varpi |m}}  \left( 1- \frac{r_{\delta}(\varpi)^2}{N(\varpi)^{2+2w}}\right)^{-1} \prod_{\substack{\varpi \in G \\ \varpi |m}}  \left( 1- \frac{r_{\delta}(\varpi)}{N(\varpi)^{1+w}}\left( \frac{i k_1}{\varpi}\right)\right) \\
& = \prod_{\substack{\varpi \in G \\ \varpi |m}}  \left( 1- \frac{r_{\delta}(\varpi)^2}{N(\varpi)^{2+2w}}\right)^{-1}\sum_{\substack{j|m \\ j \equiv 1 \bmod {(1+i)^3}}} \frac{\mu_{[i]}(j)r_{\delta}(j)}{N(j)^{1+w}} \left( \frac{i k_1}{j}\right).
\end{align*}
 Using this in \eqref{eq: Psi 1 after factoring out G0}, we obtain via another application of the Cauchy-Schwarz inequality that
\begin{equation*}
\Psi(k,g) \ll_{\varepsilon} (N(\alpha) K)^{\varepsilon}N(g)^{1+\varepsilon} \sum_{N(j)<\frac{2L}{N(g)} } \Bigg| \sum_{\substack{\frac{L}{N(g)}\leq N(m)<\frac{2L}{N(g)} \\ j|m }} \mu^2_{[i]}(m) \delta_{gm} \left( \frac{i k}{m}\right) \prod_{\substack{\varpi \in G \\ \varpi |m}}  \left( 1- \frac{r_{\delta}(\varpi)^2}{N(\varpi)^{2+2w}} \right)^{-1}\Bigg|^2.
\end{equation*}

  We relabel $m$ by $jm$ and note that for all $\varpi |m$ and $\Re(w)=-\frac{1}{2}+|\Re(\delta)| +\varepsilon$, we have
\begin{align*}
 \mu^2_{[i]}(j)\left( \frac{i k}{j}\right) \prod_{\substack{\varpi \in G \\ \varpi |j }} \left( 1- \frac{r_{\delta}(\varpi)^2}{N(\varpi)^{2+2w}} \right)^{-1} \ll N(j)^{\varepsilon}.
\end{align*}
  This implies that
\begin{equation}\label{eq: after relabeling m as jm in Psi 1}
\Psi(k,g) \ll_{\varepsilon} (N(\alpha) LK)^{\varepsilon}N(g)^{1+\varepsilon} \sum_{N(j)<\frac{2L}{N(g)} } \Bigg| \sum_{\substack{\frac{L}{N(gj)}\leq N(m)<\frac{2L}{N(gj)} \\ (m,2\alpha j)=1}} \mu^2_{[i]}(m) \delta_{gjm} \left( \frac{i k}{m}\right) \prod_{\substack{\varpi \in G \\ \varpi |m}} \left( 1- \frac{r_{\delta}(\varpi)^2}{N(\varpi)^{2+2w}} \right)^{-1}\Bigg|^2.
\end{equation}

 Notice that $g|a(k)$ implies $b(g)|k$ by \eqref{eq: defn of a(k) and b(k)}. We may thus relabel such $k$ by $f b(g)$ to deduce from \eqref{eq: after relabeling m as jm in Psi 1} that
\begin{align}
\label{eq: after relabeling k as fb(g)}
\begin{split}
 & \sum_{K\leq N(k)< 2K}\frac{1}{N(k_2)}  \sum_{\substack{g|a(k) \\ N(g)<2L}} \Psi(k,g) \\
\leq & \sum_{N(g)<2L} \sum_{\substack{K\leq N(k)< 2K \\ b(g)|k} }\frac{1}{N(k_2)}  \Psi(k,g)
= \sum_{N(g)<2L}\sum_{\frac{K}{N(b(g))}\leq N(f)< \frac{2K}{N(b(g))}} \frac{1}{N(k_2)}  \Psi(f b(g),g) \\
\ll_{\varepsilon} &  (N(\alpha)  L K)^{\varepsilon}\sum_{N(g)<2L}N(g)^{1+\varepsilon} \sum_{\frac{K}{N(b(g))}\leq N(f)< \frac{2K}{N(b(g))}} \frac{1}{N(k_2)} \\
& \times \sum_{N(j)<\frac{2L}{N(g)} } \Bigg| \sum_{\substack{\frac{L}{N(gj)}\leq N(m)<\frac{2L}{N(gj)}}} \mu^2_{[i]}(m) \delta_{gjm} \left( \frac{i f b(g)}{m}\right) \prod_{\substack{\varpi \in G \\ \varpi |m}} \left( 1- \frac{r_{\delta}(\varpi)^2}{N(\varpi)^{2+2w}} \right)^{-1}\Bigg|^2 .
\end{split}
\end{align}

  On writing $f=f_1f_2^2$ with $f_1$ square-free and $f_2 \in G$, we observe that the relation $f b(g)=k_1k_2^2$ implies that $f_2| k_2$, so that $N(k_2)^{-1}\ll N(f_2)^{-1}$. Applying this in \eqref{eq: after relabeling k as fb(g)}, we see that the assertion of our lemma follows from Lemma~\ref{lem: estimates for character sums}.
 \end{proof}

\begin{lemma}\label{lem: version of Lemma 5.4 of Sound}
Let $K, L \geq 1$ be two integers and let $N(\alpha)\leq X, X>0$. For $\Re(w)=-\frac{1}{2}+|\Re(\delta)|+\varepsilon$ and any sequence of complex numbers $\gamma_{l}$ satisfying $|\gamma_{l}|\leq 1$, the expression
\begin{align}
\label{Gklsum}
\sum_{\substack{ K\leq N(k)<2K  }}\frac{1}{N(k_2)} \Bigg| \sum_{\substack{N(l)=L \\ (l,2\alpha)=1}}^{2L-1} \frac{\gamma_{l}}{N(l)}  \mathcal{G}_{\delta}(1+w;k,l,\alpha) h(\frac{N(k)X}{2N(\alpha^2l)}, w) \Bigg|^2
\end{align}
is bounded by
\begin{align*}
\ll_{\varepsilon} |\widehat{\Psi}(1+w)|^2(1+|w|)^{8-4|\Re(\delta)|+\varepsilon}\frac{N(\alpha)^{2-4|\Re(\delta)|+\varepsilon}L^{2-2|\Re(\delta)|+\varepsilon}
K^{2|\Re(\delta)|+\varepsilon} }{X^{1-2|\Re(\delta)|-\varepsilon}} \exp\Bigg( -\frac{1}{20}\frac{\sqrt[4]{K}}{\sqrt[4]{N(\alpha)^2 L(1+|w|^2)}}\Bigg),
\end{align*}
and also by
\begin{align*}
\ll_{\varepsilon} ((1+|w|)N(\alpha) LKX)^{\varepsilon}|\widehat{\Psi}(1+w)|^2  \Big(\frac{N(\alpha)^2 L(1+|w|^2)}{K}\Big)^{2|\Re(\tau)|-2|\Re(
\delta)|}
\frac{N(\alpha)^{2}L}
{K X^{1-2|\Re(\delta)|}}(K+L).
\end{align*}
\end{lemma}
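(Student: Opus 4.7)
The plan is to derive the two claimed bounds from two complementary estimates on $h(\xi,w)$ provided by Lemma~\ref{lem: properties of h(xi,w)}, combined with bounds on the inner sum in $l$.

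For the first bound, I would apply the exponential estimate \eqref{hbound}. Substituting $\xi=N(k)X/(2N(\alpha)^{2}N(l))$ with $N(k)\in[K,2K)$ and $N(l)\in[L,2L)$, one has $\xi\ge KX/(4N(\alpha)^{2}L)$, which converts the exponential factor in \eqref{hbound} into the Gaussian $\exp(-\tfrac{1}{20}K^{1/4}/(N(\alpha)^{2}L(1+|w|^{2}))^{1/4})$ claimed in the statement, while $\xi^{\Re(w)}$ with $\Re(w)=-\tfrac12+|\Re(\delta)|+\varepsilon$ supplies the required polynomial factors in $K$, $L$, $X$, $N(\alpha)$. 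The inner $l$-sum is bounded crudely using $|\mathcal{G}_{\delta}(1+w;k,l,\alpha)|\ll N(\alpha k)^{\varepsilon}N(l)^{1/2+\varepsilon}$ from Lemma~\ref{lem: nu-sum as an Euler product}, which yields $\sum_{N(l)\in[L,2L)}|\mathcal{G}_{\delta}|/N(l)\ll N(\alpha k)^{\varepsilon}L^{1/2+\varepsilon}$; the $k$-sum $\sum_{N(k)\in[K,2K)}1/N(k_{2})$ is $\ll K^{1+\varepsilon}$ by writing $k=k_{1}k_{2}^{2}$. Assembling these produces the first bound.

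For the second bound, I would instead use the contour representation \eqref{eq: integral in lemma for h(xi,w)} and shift the contour to $\Re(s)=c=|\Re(\tau)|+\varepsilon$, the smallest value permitted; the shift is valid since $c>|\Re(\tau)|$ avoids the poles at $s=\pm\tau$ and $c<1+\Re(w)$. Controlling the four Gamma factors by Stirling and using the reflection identity $\Gamma(s-w)\Gamma(1-s+w)=\pi/\sin(\pi(s-w))$ to cancel the exponential growth and decay of $\Gamma(s-w)/\Gamma(1-s+w)$, one obtains, after integrating over $\Im(s)$, the uniform estimate
\[
|h(\xi,w)|\ll |\widehat{\Psi}(1+w)|(1+|w|)^{2|\Re(\tau)|-2|\Re(\delta)|}\,\xi^{\Re(w)}(X/\xi)^{c}.
\]
Since $\xi\asymp KX/(N(\alpha)^{2}L)$ throughout the dyadic range, this furnishes a bound for $|h(\xi(k,l),w)|$ independent of the individual $k,l$, which factors out of the squared sum in \eqref{Gklsum}. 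The remaining quantity $\sum_{k}1/N(k_{2})\,|\sum_{l}\gamma_{l}/N(l)\cdot\mathcal{G}_{\delta}|^{2}$ I would handle by applying Lemma~\ref{lem: version of Lemma 5.5 of Sound} with the rescaled coefficients $\tilde{\delta}_{l}=\sqrt{L}\,\gamma_{l}/\sqrt{N(l)}$, which satisfy $|\tilde{\delta}_{l}|\le 1$; this gives $\sum_{k}1/N(k_{2})|\sum_{l}\tilde{\delta}_{l}/\sqrt{N(l)}\cdot\mathcal{G}_{\delta}|^{2}\ll L(L+K)(N(\alpha)LK)^{\varepsilon}$, and since the left-hand side equals $L$ times our target sum, dividing by $L$ yields $(L+K)(N(\alpha)LK)^{\varepsilon}$. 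Combining with the squared uniform bound on $h$ produces the second estimate.

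The main technical obstacle is the uniform bound on $|h(\xi,w)|$ via the contour shift to the small value $c=|\Re(\tau)|+\varepsilon$: the estimate \eqref{gbound'} in the proof of Lemma~\ref{lem: properties of h(xi,w)} was derived under $c\ge 2$, so the Stirling analysis must be carried out afresh, invoking the reflection formula for $\Gamma(s-w)/\Gamma(1-s+w)$ to properly extract the factor $(1+|w|)^{2|\Re(\tau)|-2|\Re(\delta)|}$ with the sharp exponent. A secondary subtlety is the rescaling of Lemma~\ref{lem: version of Lemma 5.5 of Sound} by a factor of $\sqrt{L}$, which is what removes the spurious factor of $L$ one would otherwise pick up.
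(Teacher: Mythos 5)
Your plan for the first bound follows the paper: apply the exponential estimate \eqref{hbound} and the bound for $\mathcal{G}_{\delta}$, then sum. One slip: the bound from Lemma~\ref{lem: nu-sum as an Euler product} is $|\mathcal{G}_{\delta}|\ll N(\alpha k)^{\epsilon}N(l)^{1/2+\epsilon}N((l,k_2^2))^{1/2}$, so your inner $l$-sum is missing the factor $N((l,k_2^2))^{1/2}$ (which the paper crudely bounds by $N(k_2)$ and then absorbs into the $k$-sum). Since the extra factor only changes the bookkeeping without damaging the final exponents, this is a correctable oversight rather than a structural problem.

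The second bound, however, has a genuine gap. You claim the uniform estimate $|h(\xi(k,l),w)|\ll C$ "factors out of the squared sum," reducing \eqref{Gklsum} to $C^2 \sum_k N(k_2)^{-1}\bigl|\sum_l \gamma_l N(l)^{-1}\mathcal{G}_{\delta}\bigr|^2$. This is false: a sup bound on $|h|$ gives $\bigl|\sum_l a_l h(\xi_l,w)\bigr|\le C\sum_l|a_l|$, \emph{not} $C\bigl|\sum_l a_l\bigr|$. The oscillation in $h$ as $l$ varies can conspire constructively or destructively with the characters encoded in $\mathcal{G}_{\delta}$, and the triangle inequality version destroys exactly the cancellation that Lemma~\ref{lem: version of Lemma 5.5 of Sound} is designed to exploit. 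Once you have lost the $l$-sum cancellation, applying the large sieve becomes vacuous.

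The paper's actual route keeps $h$ coupled to the $l$-sum by using the contour representation \eqref{eq: integral in lemma for h(xi,w)} with $c=|\Re(\tau)|+\varepsilon$, moving the $s$-integral \emph{outside} the $l$-sum. The $l$-dependence of $\xi=N(k)X/(2N(\alpha^2 l))$ is then carried into the $l$-sum as a power $N(l)^{s-w}$, so the inner sum becomes $\sum_l \gamma_l N(l)^{-(1+w-s)}\mathcal{G}_{\delta}(1+w;k,l,\alpha)$. Only after a Cauchy--Schwarz in $s$ against the decaying weight $g(s,w)\ll(1+|w|)^{2|\Re(\tau)|-2|\Re(\delta)|+\varepsilon}e^{-\pi|\Im s|/2}$ does one apply Lemma~\ref{lem: version of Lemma 5.5 of Sound}, now to coefficients $\delta_l=\gamma_l N(l)^{-\frac12-w+s}$ (which satisfy $|\delta_l|\ll N(l)^{\varepsilon}$ because of the choice of $\Re(w)$ and $\Re(s)$). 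Your ``rescaling by $\sqrt{L}$'' step is also unnecessary and symptomatic of the misdiagnosis: taking $\delta_l=\gamma_l/\sqrt{N(l)}$ already gives $\delta_l/\sqrt{N(l)}=\gamma_l/N(l)$ with $|\delta_l|\ll 1$, and Lemma~\ref{lem: version of Lemma 5.5 of Sound} delivers $L(L+K)$ directly, not $L^2(L+K)$. In short, you correctly identified the contour shift and the Stirling analysis on the Gamma factors as the key technical inputs, but the decoupling of $h$ from the $l$-sum has to go through the integral representation and Cauchy--Schwarz, not a sup bound.
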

\begin{proof}
We apply Lemma~\ref{lem: properties of h(xi,w)}, Lemma~\ref{lem: nu-sum as an Euler product} to bound respectively $h(\xi,w)$ and $\mathcal{G}_{\delta}$ to see that the expression in \eqref{Gklsum} is
\begin{align*}
\ll & |\widehat{\Psi}(1+w)|^2(1+|w|)^{8-4|\Re(\delta)|+\varepsilon}\frac{N(\alpha)^{2-4|\Re(\delta)|+\varepsilon}L^{-2|\Re(\delta)|+\varepsilon}
K^{2|\Re(\delta)|+\varepsilon} }{X^{1-2|\Re(\delta)|-\varepsilon}} \exp\Bigg( -\frac{1}{20}\frac{\sqrt[4]{K}}{\sqrt[4]{N(\alpha)^2 L(1+|w|^2)}}\Bigg) \\
& \times \sum_{\substack{ K\leq N(k)<2K }}\frac{1}{N(k)N(k_2)} \Bigg(\sum_{\substack{N(l)=L \\ (l,2\alpha)=1}}^{2L-1} N((l,k_2^2))^{\frac{1}{2}} \Bigg)^2 \\
\ll & |\widehat{\Psi}(1+w)|^2(1+|w|)^{8-4|\Re(\delta)|+\varepsilon}\frac{N(\alpha)^{2-4|\Re(\delta)|+\varepsilon}L^{-2|\Re(\delta)|+\varepsilon}
K^{2|\Re(\delta)|+\varepsilon} }{X^{1-2|\Re(\delta)|-\varepsilon}} \exp\Bigg( -\frac{1}{20}\frac{\sqrt[4]{K}}{\sqrt[4]{N(\alpha)^2 L(1+|w|^2)}}\Bigg) \\
& \times \sum_{\substack{ K\leq N(k)<2K }}\frac{1}{N(k)N(k_2)} \Bigg(\sum_{\substack{N(l)=L \\ (l,2\alpha)=1}}^{2L-1} N(k_2) \Bigg)^2.
\end{align*}
 Upon writing $N(k)=N(k_1k^2_2)$,  we readily deduce the first bound of the lemma from the above estimation.

 To derive the second bound, we set $c=|\Re(\tau)|+\varepsilon$ to recast the integral in \eqref{eq: integral in lemma for h(xi,w)} as
\begin{align*}
\frac{1}{2\pi i} \int\limits_{(|\Re(\tau)|+\varepsilon)} g(s,w)\left(\frac{X}{\xi}\right)^s\,ds.
\end{align*}
 This implies that
\begin{align*}
& \Bigg| \sum_{\substack{N(l)=L \\ (l,2\alpha )=1}}^{2L-1} \frac{\gamma_{l}}{N(l)}  \mathcal{G}_{\delta}(1+w;k,l,\alpha) h(\frac{N(k)X}{2N(\alpha^2l)}, w) \Bigg| \\
 \ll & |\widehat{\Psi}(1+w)|\left(\frac{N(\alpha)^{1+2|\Re(\tau)|-2|\Re(\delta)|+\varepsilon}  }{K^{\frac{1}{2}+|\Re(\tau)|-|\Re(\delta)|-\varepsilon}X^{\frac{1}{2}-|\Re(\delta)|-\varepsilon}}\right)\int\limits_{(|\Re(\tau)|+\varepsilon)}\Bigg| g(s,w)\sum_{\substack{N(l)=L \\ (l,2\alpha)=1}}^{2L-1}  \frac{\gamma_{l}}{N(l)^{1+w-s}} \mathcal{G}_{\delta}(1+w;k,l,\alpha)   \Bigg|\,|ds|.
\end{align*}
 Notice that \eqref{gbound'} is still valid with $c=|\Re(\tau)|+\varepsilon$ and $\Re(w)=-\frac{1}{2}+|\Re(\delta)|+\varepsilon$ so that it implies that $g(s,w)\ll_{\varepsilon}(1+|w|)^{2|\Re(\tau)|-2|\Re(\delta)|+\varepsilon}\exp(-\frac{\pi}{2} |\Im(s)|)$. We apply this estimation and the Cauchy-Schwarz inequality to deduce that
\begin{align*}
\begin{split}
& \Bigg| \sum_{\substack{N(l)=L \\ (l,2\alpha )=1}}^{2L-1}  \frac{\gamma_{l}}{N(l)}  \mathcal{G}_{\delta}(1+w;k,l,\alpha) h(\frac{N(k)X}{2N(\alpha^2l)}, w)\Bigg| \\
\ll &  (1+|w|)^{2|\Re(\tau)|-2|\Re(\delta)|+\varepsilon}|\widehat{\Psi}(1+w)| \left(\frac{N(\alpha)^{1+2|\Re(\tau)|-2|\Re(\delta)|+\varepsilon} }{K^{\frac{1}{2}+|\Re(\tau)|-|\Re(\delta)|-\varepsilon}X^{\frac{1}{2}-|\Re(\delta)|-\varepsilon}}\right) \\
& \times  \int\limits_{(|\Re(\tau)|+\varepsilon)}\Bigg (\exp(-\tfrac{\pi}{2}|\Im(s)|)\Bigg| \sum_{\substack{N(l)=L \\ (l,2\alpha )=1}}^{2L-1}  \frac{\gamma_{l}}{N(l)^{1+w-s}} \mathcal{G}_{\delta}(1+w;k,l,\alpha)   \Bigg|^2\,|ds| \Bigg )^{1/2}.
\end{split}
\end{align*}
 By inserting the above bound into \eqref{Gklsum} and applying Lemma~\ref{lem: version of Lemma 5.5 of Sound}, we obtain the second bound of the lemma.
\end{proof}

   In the remaining of the section, we include two more results concerning various functions studied in this paper.
\begin{lemma}
\label{Lemma 3.16}  Let $R$ be a polynomial satisfying $R(0)=R^{\prime}(0)=0$.
Let $g$ be a multiplicative function satisfying $g(\varpi)=1+O(N(\varpi)^{-\nu})$
for some fixed $\nu >0$.  Let $u$ and $v$ be two
bounded complex numbers such that $\Re(u+v)$ and $\Re
(u-v)$ are $\ge - D/\log y$ for an absolute positive
constant $D$ and a large real
number $y$. Then we have for $\Re(s)>1 + D/\log y$,
\begin{align}
\label{gEulerprod}
\sum_{\substack{ n \equiv 1 \bmod (1+i)^3 }}
\frac{r_{v}(n) \mu_{[i]}(nc)}{N(n)^{s+u}} g(n)
= \frac{\mu_{[i]}(c) G(s,c;u,v)}{\zeta_K(s+u+v)\zeta_K(s+u-v)},
\end{align}
where $G(s,c;u,v)=\prod_{\substack{ \varpi \in G }} G_{\varpi} (s,c;u,v)$ is a holomorphic function in $\Re(s)> \max(\frac 12,1-\nu)
+ D/\log y$ defined by
$$
G_{\varpi}(s,c;u,v):=
\begin{cases}
(1-\frac 1{N(\varpi)^{s+u+v}})^{-1} (1-\frac{1}{N(\varpi)^{s+u-v}})^{-1}
&\text{if } \varpi|2c, \\
(1-\frac 1{N(\varpi)^{s+u+v}})^{-1} (1-\frac{1}{N(\varpi)^{s+u-v}})^{-1} (1-\frac{g(\varpi)r_v(\varpi)}
{N(\varpi)^{s+u}})
&\text{otherwise}.
\end{cases}
$$

Moreover, for any odd $c \in \mathcal O_K$ with $N(c) \le y$,  we have
\begin{align}
\label{sumoverg}
\begin{split}
\sum_{\substack{ N(n) \le y/N(c) \\ n \equiv 1 \bmod (1+i)^3 }}
\frac{r_{v}(n)\mu_{[i]}(nc)}{N(n)^{1+u}}&g(n) R\leg{\log (y/N(cn))}{\log y}
=  O\Big(\frac{E(c)}{\log^2 y} \leg{y}{N(c)}^{-\Re(u) +|\Re(v)|}
\exp(-A_0\sqrt{\log (y/N(c))})\Big)\\
&+ \mathop{\text{Res}}_{s=0}
\frac{\mu_{[i]}(c)G(s+1,c;u,v)}{s\zeta_K(1+s+u+v)\zeta_K(1+s+u-v)
} \sum_{k=0}^{\infty} \frac{1}{(s\log y)^k} R^{(k)}
\Big(\frac{\log (y/N(c))}{\log y}\Big),
\end{split}
\end{align}
where $A_0>0$ is an absolute constant and $E(c)=\prod_{\substack{ \varpi \in G \\ \varpi |c}}
(1+1/\sqrt{N(\varpi)})$.
\end{lemma}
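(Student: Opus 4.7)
The proof naturally divides into two parts. For the Euler product identity \eqref{gEulerprod}, the plan is to expand the left side as an Euler product over primary primes $\varpi$ and match factor-by-factor with the right. Because $\mu_{[i]}(nc) \neq 0$ forces $nc$ to be square-free, the local contribution at $\varpi \mid c$ forces $\varpi \nmid n$, contributing $1$, while the global sign $\mu_{[i]}(c)$ pulls out. For $\varpi \nmid 2c$, the local contribution is $1 - g(\varpi) r_v(\varpi) N(\varpi)^{-(s+u)}$, which, using $r_v(\varpi) = N(\varpi)^v + N(\varpi)^{-v}$ and the identity $(1-A)(1-B) = 1-(A+B)+AB$ with $A = N(\varpi)^{-(s+u+v)}$, $B = N(\varpi)^{-(s+u-v)}$, equals $G_\varpi(s,c;u,v)(1-A)(1-B)$ exactly. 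The claim on holomorphy of $G$ then follows because the local factor at $\varpi \nmid 2c$ has the shape $1 + O(N(\varpi)^{-(s+u+\nu)}) + O(N(\varpi)^{-2(s+u)})$ after using $g(\varpi) = 1 + O(N(\varpi)^{-\nu})$, so the Euler product converges absolutely when $\Re(s+u) > \max(\tfrac12, 1-\nu)$, i.e., in the stated region.

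For the asymptotic \eqref{sumoverg}, the plan is Mellin--Perron inversion followed by a contour shift. I define $\mathcal{R}(s) = s^{-1}\sum_{k\ge 2} R^{(k)}(0)/(s\log y)^k$; the hypothesis $R(0)=R'(0)=0$ ensures $\mathcal{R}$ is a rational function with a single pole at $s=0$, and the standard identity $\mathrm{Res}_{s=0} Z^s/s^{k+1} = (\log Z)^k/k!$ gives, for any $c_0 > 0$,
$$
\frac{1}{2\pi i}\int_{(c_0)} \bigl(Z/N(n)\bigr)^s \mathcal{R}(s)\,ds \;=\; R\!\left(\frac{\log(Z/N(n))}{\log y}\right) \mathbf{1}_{N(n) < Z}.
$$
Setting $Z = y/N(c)$ and interchanging the sum and integral (legitimate for $c_0$ large enough that the Dirichlet series from \eqref{gEulerprod} converges absolutely), the left side of \eqref{sumoverg} becomes
$$
I \;=\; \frac{1}{2\pi i}\int_{(c_0)} \Bigl(\tfrac{y}{N(c)}\Bigr)^s \frac{\mu_{[i]}(c)\,G(1+s,c;u,v)}{\zeta_K(1+s+u+v)\zeta_K(1+s+u-v)}\,\mathcal{R}(s)\,ds.
$$

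I would then shift the contour past $s=0$ onto a Hankel-type contour $\mathcal{C}$ following the classical zero-free region $\Re(s) = -\Re(u) + |\Re(v)| - c_1/\log(|\Im(s)|+2)$ for $\zeta_K$. The pole at $s=0$ contributes the main term, and a short Taylor rearrangement (expanding $(y/N(c))^s$ and regrouping with the $R^{(k)}(0)/(s\log y)^k$ terms) shows this residue coincides with the expression written in the lemma using $R^{(k)}(\log(y/N(c))/\log y)$. The integral over $\mathcal{C}$ is estimated by combining: the zero-free-region bound $1/|\zeta_K(1+s+u\pm v)| \ll \log(|\Im(s)|+2)$; the bound $|G(1+s,c;u,v)| \ll E(c)$ from the explicit Euler product; the growth $|(y/N(c))^s| \ll (y/N(c))^{-\Re(u)+|\Re(v)|}\exp(-c_1 \log(y/N(c))/\log(|\Im(s)|+2))$; and the decay $|\mathcal{R}(s)| \ll (\log y)^{-2}|s|^{-3}$ at large $|s|$. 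Balancing the zigzag portion of $\mathcal{C}$ near the real axis against the vertical parts at large $|\Im(s)|$ produces the saving $\exp(-A_0\sqrt{\log(y/N(c))})$, while the $(\log y)^{-2}$ factor is automatic from the leading Laurent coefficient of $\mathcal{R}$.

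The main technical obstacle is verifying that $F(s) := \mu_{[i]}(c)G(1+s,c;u,v)/(\zeta_K(1+s+u+v)\zeta_K(1+s+u-v))$ is holomorphic on and to the right of $\mathcal{C}$: the candidate singularities at $s = -u\mp v$ may lie within $O(1/\log y)$ of the pole $s=0$ and must be recognized as \emph{zeros} (not poles) of $1/\zeta_K$, which is entire. A secondary delicate point is producing the specific weight $E(c)$ in the error term, which requires controlling the local factors of $G$ at primes dividing $2c$ uniformly in $s$ along $\mathcal{C}$ via the explicit Euler product given in part (i). Verifying the equivalence of the two forms of the residue, while elementary, requires careful bookkeeping of the Taylor coefficients of $F(s)(y/N(c))^s$ combined with the Laurent coefficients of $\mathcal{R}(s)$.
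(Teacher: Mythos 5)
Your proposal is correct and follows essentially the same route as the paper: the Euler product identity is verified factor by factor, and \eqref{sumoverg} is obtained by a Perron-type representation of the weight $R$ (the paper expands $R$ into Taylor monomials and treats each $\log^j$ as a Riesz mean, which is just your kernel $\mathcal{R}(s)$ unbundled), followed by a contour shift into the zero-free region of $\zeta_K$, the bound $G \ll E(c)$, and the same Taylor rearrangement of the residue at $s=0$. The only difference is cosmetic: the paper truncates at $T=\exp(\sqrt{\log(y/N(c))})$ and shifts to a straight line rather than using a curved Hankel-type contour, yielding the identical $\exp(-A_0\sqrt{\log(y/N(c))})$ saving.
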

\begin{proof} First note that we can establish \eqref{gEulerprod} by considering
Euler products. To prove \eqref{sumoverg},  we may assume that $N(c) \le y/2$.
We then apply the Taylor expansion $R(x)=\sum_{j=0}^{\infty}
\frac{R^{(j)}(0)}{j!} x^j = \sum_{j=2}^{\infty} \frac{R^{(j)}(0)}{j!} x^j$
to write the sum in \eqref{sumoverg} as
\begin{align*}
&\sum_{j=2}^{\infty} \frac{R^{(j)}(0)}{(\log y)^{j}} \frac{1}{j!} \sum_{\substack{ N(n) \le y/N(c) \\ n \equiv 1 \bmod (1+i)^3 }} \frac{r_{v}(n)\mu_{[i]}(nc)}{N(n)^{1+u}} g(n)
\log^{j} \leg{y}{N(cn)}.
\end{align*}
   We note that the inner sum above can be regarded as a Riesz type means so that we can apply the treatment given in \cite[Sect 5.1]{MVa1} to further write the above sums as
\begin{align}
\label{integralformLemma3.16}
& \sum_{j=2}^{\infty} \frac{R^{(j)}(0)}{(\log y)^j}
\frac{1}{2\pi i} \int\limits_{(\frac{D+1}{\log (y/N(c))})}
\frac{\mu_{[i]}(c) G(s+1,c;u,v)}{\zeta_K(1+s+u+v)\zeta_K(1+s+u-v)} \Big(\frac y{N(c)}\Big)^s
\frac{ds}{s^{j+1}}.
\end{align}

  To evaluate the integral above, we set $T=\exp(\sqrt{\log (y/N(c))})$ and apply the zero free region for $\zeta_K(s)$ (see \cite[Section 8.4]{MVa1}) to choose a positive constant $A_1$ such that $\zeta_K(1+s+u+v)\zeta_K(1+s+u-v)$ has no zeros in the region $\Re(s) \geq -\Re(u) +|\Re(v)|-A_1/\log T, \Im(s) \leq T$. We then notice that we have $G(s+1,c;u,v)\ll E(c)$ uniformly for $\Re(s) \geq -\Re(u) +|\Re(v)|-A_1/\log T$. Moreover,  similar to the bound given for the Riemann zeta function in \cite[Theorem 6.7]{MVa1}, we have the following estimation for $1/\zeta_K(1+s)$ which asserts that for $\Re(s) > -A_1/\log T$,
\begin{align}
\label{zetarecibound}
\frac{1}{\zeta_K(1+s)} \ll \min (1, \log^2 \Im(s)) .
\end{align}

  We now truncate the integral in \eqref{integralformLemma3.16} to the line segment
$\frac{D+1}{\log (y/N(c))} -iT$ to $\frac{D+1}{\log (y/N(c))} +iT$ with
$T=\exp(\sqrt{\log (y/N(c))})$. By applying the above estimations for $G(s+1,c;u,v)$ and $1/\zeta_K(1+s)$, we see that the error introduced by doing so is
\begin{align}
\label{errtruncate}
\ll E(c)(\log y/N(c))^2/T^2.
\end{align}

 We then shift the remaining integral to the line segment $-\Re(u) +|\Re(v)|
-A_1/\log T$. Again by the above estimations for $G(s+1,c;u,v)$ and $1/\zeta_K(1+s)$, we see that the two horizontal integrals are
\begin{align}
\label{errhzt}
 \ll E(c)(\log (y/N(c)))^2\frac 1{T^3}.
\end{align}

   For the vertical integral, we note that as $\min (\Re(u+v), \Re
(u-v) ) \geq - D/\log y$, we have
\begin{align*}
 -\Re(u) +|\Re(v)| - A_1/\log T \ll -1/\sqrt{\log (y/N(c))}.
\end{align*}
  We can thus divide the vertical integral into two parts, one over the segment $|\Im(s)| \leq 1/\sqrt{\log (y/N(c))}$ and one over the rest. We apply \eqref{zetarecibound} and the bound $G(s+1,c;u,v)\ll E(c)$  to see that the vertical integral is
\begin{align*}
 \ll & E(c)((\log (y/N(c)))^{j/2}+(\log (y/N(c)))^2(\log (y/N(c)))^{j/2})(y/N(c))^{-\Re(u) +|\Re(v)| - A_1/\log T}.
\end{align*}
 As $R$ is a polynomial, we know that $R^{(j)}(0) \neq 0$ only for finitely many $j$ so that we may assume that $j$ is bounded. It follows that for an appropriate positive constant $A_0$, the above is
\begin{align}
\label{errvertint}
\ll E(c) \leg{y}{N(c)}^{-\Re(u) +|\Re(v)|}
\exp(-A_0\sqrt{\log (y/N(c))}).
\end{align}

   Furthermore, we note that we encounter a multiple
pole at $s=0$ in the above process. Thus, by combining \eqref{errtruncate}, \eqref{errhzt} and \eqref{errvertint}, we conclude that
the expression given in \eqref{integralformLemma3.16} is
\begin{align}
\label{simplifiedintegralformLemma3.16}
\begin{split}
=& \mathop{\text Res}_{s=0}
\frac{\mu_{[i]}(c) G(s+1,c;u,v)}{s\zeta_K(1+s+u+v)\zeta_K(1+s+u-v)}
\sum_{j=2}^{\infty}\frac{ R^{(j)}(0) (y/N(c))^s}{s^{j} (\log y)^j}
\\
&
+ O\Big( \frac{E(c)}{(\log y)^2} \leg{y}{N(c)}^{-\Re(u) +|\Re(v)|}
\exp(-A_0\sqrt{\log (y/N(c))})\Big).
\end{split}
\end{align}

 We now calculate the residue above using the Taylor expansion of $(y/N(c))^s$ around $s=0$ and discarding the powers of $s$ that are $\geq j+1$  since they make no contributions. This way, we see that the residue equals
$$
\sum_{j=2}^{\infty}\frac{ R^{(j)}(0)}{s^{j} (\log y)^j}
\Big(\sum_{l\le j} \frac{s^l}{l!} (\log (y/N(c))^l\Big)
= \sum_{k=0}^{\infty}
\frac{s^{-k}}{(\log y)^k}  \sum_{l=0}^{\infty}
\frac{R^{(k+l)}(0)}{l!} \Big(\frac{\log (y/N(c))}{\log y}\Big)^l=\sum_{k=0}^{\infty}
\frac{s^{-k}}{(\log y)^k} R^{(k)}\Big(\frac{\log (y/N(c))}{\log y}\Big),
$$
where the first equality above following by setting $k=j-l$ while noting that
$R(0)=R'(0)=0$.  Applying this in \eqref{simplifiedintegralformLemma3.16} allows us to deduce \eqref{sumoverg} and this completes the proof of the lemma.
\end{proof}

  For our next result, we define for odd primary primes $\varpi$,
\begin{align*}
h_{w}(\varpi) =& \Big( 1+\frac 1{N(\varpi)} +\frac 1{N(\varpi)^{1+2w}} -
\frac{N(\varpi)^{-2\delta}+N(\varpi)^{2\delta}}{N(\varpi)^{2+2w}} +\frac{1}{N(\varpi)^{3+4w}}\Big),  \\
H_{w}(\varpi) =& 1+ \frac{1}{N(\varpi)^{1+2w}} -\frac{r_{\delta}(\varpi)^2}{
N(\varpi)^{1+2w} h_{w}(\varpi)},
\end{align*}
and extend the above definitions multiplicatively to functions $h_{w}(n), H_{w}(n)$ on primary odd, square-free algebraic numbers $n \in \mathcal O_K$.

  Moreover, we write any odd $l \in \mathcal O_K$ as $l=l_1l_2^2$, with $l_1$ being square-free and $l_2 \in G $.
We define an absolutely convergent function $\eta_w(s;l)= \prod_{\substack{\varpi \in G}} \eta_{\varpi;w}(s;l)$ for complex numbers $w, s$ in the region $|\Re(w)|\le \frac 14$ and $\Re(s) >\frac 12$ such that
$\eta_{1+i;w}(s;l)= (1-2^{-s-2w})(1-2^{-s})(1-w^{-s+2w})$ and for primes
$(\varpi, 2)=1$,
\begin{align}
\label{etadef}
\begin{split}
\eta_{\varpi;w}(s;l) =
\begin{cases}
\leg{N(\varpi)}{N(\varpi)+1} \Big(1-\frac{1}{N(\varpi)^s}\Big) \Big(1+\frac 1{N(\varpi)} +\frac 1{N(\varpi)^s}
- \frac{N(\varpi)^{2w}+N(\varpi)^{-2w}}{N(\varpi)^{s+1}} +\frac{1}{N(\varpi)^{2s+1}}\Big)
&\text{if } \varpi \nmid l,\\
\leg{N(\varpi)}{N(\varpi)+1} \Big(1-\frac{1}{N(\varpi)^s}\Big) &\text{if } \varpi|l_1,\\
\leg{N(\varpi)}{N(\varpi)+1} \Big(1-\frac{1}{N(\varpi)^{2s}}\Big) &\text{otherwise}.
\end{cases}
\end{split}
\end{align}

  Lastly, we denote ${\mathcal C}$ for a closed contour (oriented counter-clockwise) containing the points $\pm \tau$ with perimeter
length being $\ll |\delta_1|$. Also,  for $w \in {\mathcal C}$
we have $|\Re(w)| \le |\tau| +C/\log X$, $|\Im(w)| \le
C/\log X$ for
some absolute constant $C$ and such that $\min (
|w^2-\tau^2|, |w^2-\delta^2|) \ge
\epsilon^2/(3\log^2 X)$ . We then have the following result.
\begin{lemma}
\label{Lemma 3.17}  With notations as above, we have for $w$ on the contour ${\mathcal C}$ and $x\ge 2$,
\begin{align}
\label{sumovergammaofg}
\begin{split}
& \eta_\delta(1+2w;1)  \sum_{\substack{ N(\gamma) \le x\\ \gamma \equiv 1 \bmod (1+i)^3 }}
\frac{\mu^2_{[i]} (\gamma) H_w(\gamma)}{N(\gamma)^{1+2\tau} h_w(\gamma)}
G(1,\gamma;\delta_1+w,\delta)G(1,\gamma;\delta_2+w,\delta)
\\
=&  \zeta_K(1+2\tau)(1-x^{-2\tau})(1+O(|w-\tau|)) + O(x^{-2\tau}).
\end{split}
\end{align}

Also, for any smooth function
$R$ on $[0,1]$ and $1\le y\le x$, we have
\begin{align}
\label{sumovergammaofgsmooth}
\begin{split}
&\eta_\delta(1+2w;1) \sum_{\substack{ y \le N(\gamma) \le x\\ \gamma \equiv 1 \bmod (1+i)^3 }}
\frac{\mu^2_{[i]} (\gamma) H_w(\gamma)}{N(\gamma)^{1+2\tau} h_w(\gamma)}
G(1,\gamma;\delta_1+w,\delta)G(1,\gamma;\delta_2+w,\delta) R\Big(
\frac{\log N(\gamma)}{\log x}\Big)
\\
=&  \frac {\pi}{4} \cdot (1+O(|\delta_1|)) \int_y^x R\Big(\frac{\log t}{\log x}\Big) \frac{dt}
{t^{1+2\tau}}.
\end{split}
\end{align}
\end{lemma}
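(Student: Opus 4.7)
The plan for both identities is the same: view the $\gamma$-sum as a Mellin/Perron contour integral of a Dirichlet series whose only relevant singularity is a simple pole contributed by $\zeta_K$, and evaluate it by shifting the contour past the relevant poles.

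I would first factor out the overall $G(1,1;\delta_1+w,\delta)G(1,1;\delta_2+w,\delta)$ using the multiplicative identity (which follows from the Euler product in Lemma~\ref{lem: nu-sum as an Euler product} applied with $g\equiv 1$)
$$
G(1,\gamma;u,\delta)=G(1,1;u,\delta)\prod_{\substack{\varpi\in G\\ \varpi\mid\gamma}}\bigl(1-r_\delta(\varpi)N(\varpi)^{-1-u}\bigr)^{-1}\qquad (\gamma\ \text{odd square-free}).
$$
Writing
$$
\mathcal{D}(s,w):=\sum_{\substack{\gamma\equiv 1\bmod (1+i)^3}}\frac{\mu^2_{[i]}(\gamma)H_w(\gamma)}{N(\gamma)^{s}h_w(\gamma)}\prod_{\varpi\mid\gamma}\prod_{j=1,2}\bigl(1-r_\delta(\varpi)N(\varpi)^{-1-\delta_j-w}\bigr)^{-1},
$$
the left-hand side of \eqref{sumovergammaofg} is the partial sum (for $N(\gamma)\le x$) of the coefficients in the Dirichlet series $\Phi(s,w):=\eta_\delta(1+2w;1)G(1,1;\delta_1+w,\delta)G(1,1;\delta_2+w,\delta)\mathcal{D}(s,w)$ evaluated at $s=1+2\tau$.

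The key step is a local Euler-product calculation showing $\Phi(s,w)=\zeta_K(s)\mathcal{H}(s,w)$, where $\mathcal{H}$ is holomorphic in $\Re(s)>1/2+\varepsilon$ (uniformly for $w\in\mathcal{C}$), with $\mathcal{H}(1+2\tau,\tau)=1$ and $\mathcal{H}(1,w)=1+O(|\delta_1|)$. At an odd primary $\varpi$ (write $q=N(\varpi)$), the identity $r_\delta(\varpi)^2=2+r_{2\delta}(\varpi)$ yields
$$
\prod_{j=1,2}\bigl(1-r_\delta(\varpi)q^{-1-\delta_j-w}\bigr)=1-r_\delta(\varpi)^2 q^{-1-\tau-w}+r_\delta(\varpi)^2 q^{-2-2\tau-2w};
$$
substituting this together with the explicit formulas for $H_w(\varpi)$, $h_w(\varpi)$ and $\eta_{\varpi;\delta}(1+2w;1)=\frac{q}{q+1}(1-q^{-1-2w})h_w(\varpi)$, and the Euler-factor formula for $G_\varpi$, the local factor collapses to $(1-q^{-s})^{-1}(1+O(q^{-1-\varepsilon}))$ uniformly in the parameters; direct substitution confirms that the $1+O(q^{-1-\varepsilon})$ factor equals $1$ exactly at $(s,w)=(1+2\tau,\tau)$ for every $\varpi$.

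With the factorization in hand, Perron's formula gives
$$
\eta_\delta(1+2w;1)\sum_{N(\gamma)\le x}\tilde f(\gamma)=\frac{1}{2\pi i}\int_{(c)}\zeta_K(s+1+2\tau)\mathcal{H}(s+1+2\tau,w)\frac{x^s}{s}\,ds,
$$
where $\tilde f(\gamma)$ denotes the summand of \eqref{sumovergammaofg} and $c>0$ is chosen to the right of the pole at $s=-2\tau$. Shifting the contour to $\Re(s)=-c'$ for some $c'>2|\Re\tau|$ (inside the classical zero-free region of $\zeta_K$) crosses simple poles at $s=0$ and $s=-2\tau$, with respective residues $\zeta_K(1+2\tau)\mathcal{H}(1+2\tau,w)=\zeta_K(1+2\tau)(1+O(|w-\tau|))$ and $-(\pi/4)\mathcal{H}(1,w)x^{-2\tau}/(2\tau)$; using the Laurent expansion $\zeta_K(1+2\tau)=\pi/(8\tau)+O(1)$, the latter simplifies to $-\zeta_K(1+2\tau)x^{-2\tau}(1+O(|w-\tau|))+O(x^{-2\tau})$, matching the claimed main term. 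The truncated integral on the shifted line is bounded via the convexity estimate for $\zeta_K$ and gives an overall error of $O(x^{-2\tau})$. For \eqref{sumovergammaofgsmooth}, I would repeat the same Mellin-inversion argument with the smooth cutoff $g(t):=R(\log t/\log x)\mathbf{1}_{[y,x]}(t)$ in place of the sharp one: the sole interior pole at $s=-2\tau$ contributes $(\pi/4)\mathcal{H}(1,w)\widehat{g}(2\tau)=(\pi/4)(1+O(|\delta_1|))\int_y^x R(\log t/\log x)\,dt/t^{1+2\tau}$, which is the claimed main term, while rapid decay of $\widehat{g}(s)$ in $|\Im s|$ (from repeated integration by parts using smoothness of $R$) renders the remaining vertical integral negligible. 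Alternatively, \eqref{sumovergammaofgsmooth} follows from \eqref{sumovergammaofg} by Abel summation.

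The main obstacle is the local Euler-factor verification producing the factorization $\Phi(s,w)=\zeta_K(s)\mathcal{H}(s,w)$ with the exact normalization $\mathcal{H}(1+2\tau,\tau)=1$: the various components ($\eta_\delta(\cdot;1)$, $H_w$, $h_w$, and the $G$'s) are evidently engineered for the necessary cancellations, and establishing them cleanly requires careful bookkeeping that rests on the identity $r_\delta(\varpi)^2=2+r_{2\delta}(\varpi)$ and on the cancellation of $(1-q^{-1-2w})$ in $\eta_{\varpi;\delta}(1+2w;1)$ against one of the two $(1-q^{-1-2\tau})^{-1}$ factors arising from $G_\varpi(1,\gamma;\delta_j+w,\delta)$ at $w=\tau$.
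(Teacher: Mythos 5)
Your proposal is correct and follows essentially the same route as the paper: factor out $G(1,1;\delta_1+w,\delta)G(1,1;\delta_2+w,\delta)$, write the remaining multiplicative sum as $\zeta_K(s)$ times a function holomorphic in $\Re(s)>\tfrac12$, apply Perron's formula and shift the contour to collect the poles at $s=0$ and $s=-2\tau$, normalize via the Euler-product identity giving $1$ at $w=\tau$, and deduce \eqref{sumovergammaofgsmooth} from \eqref{sumovergammaofg} by partial summation. The only cosmetic difference is that the paper bounds the shifted vertical integral using Heath-Brown's subconvexity bound $\zeta_K(\tfrac12+it)\ll (1+|t|)^{1/3+\varepsilon}$ with the particular truncation $T=x^{1+2\tau+\varepsilon}$, whereas your convexity estimate suffices provided you choose the Perron truncation height appropriately.
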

\begin{proof} Applying the definition of $G(s,\gamma;u,v)$ given in
Lemma \ref{Lemma 3.16} allows us to write the expression given in \eqref{sumovergammaofg} as
$$
\eta_\delta(1+2w;1)G(1,1;\delta_1+w,\delta) G(1,1;\delta_2+w,\delta)
 \sum_{\substack{ N(\gamma) \le x \\ \gamma \equiv 1 \bmod (1+i)^3
}} \frac{f_w(\gamma)}{N(\gamma)^{1+2\tau}},
$$
 where
$$
f_w(\gamma)= \mu^2_{[i]}(\gamma)\frac{H_w(\gamma)}{h_w(\gamma)}
\prod_{\substack{ \varpi \in G \\ \varpi|\gamma}} \Big( 1 -\frac{r_\delta(\varpi)}{N(\varpi)^{1+\delta_1+w} h_w(\varpi)}
\Big)^{-1} \Big( 1 -\frac{r_\delta(\varpi)}{N(\varpi)^{1+\delta_2+w} h_w(\varpi)}
\Big)^{-1}.
$$
 Note here that $f_w(\gamma)$ is a multiplicative function such that $f_w(\varpi)=1+O(1/\sqrt{N(\varpi)})$.  This allows us to write $\sum_{\gamma \equiv 1 \bmod (1+i)^3} f_w(\gamma)/N(\gamma)^s
=\zeta_K(s) F_w(s)$ so that $F$ is holomorphic in $\Re(s) >\frac 12$.

 Now we apply Perron's formula as given in \cite[Theorem 5.2, Corollary 5.3]{MVa1} to obtain that for $\sigma=-2\tau+\varepsilon$, $T=x^{1+2\tau+\varepsilon}$,
\begin{align*}
 \sum_{\substack{ N(\gamma) \le x\\ \gamma \equiv 1 \bmod (1+i)^3 }} \frac{f_w(\gamma)}{N(\gamma)^{1+2\tau}} &=
\frac 1{2 \pi i}\int^{\sigma+i T}_{\sigma-i T} \zeta_K(1+2\tau+s)F_w(1+2\tau+s)\frac {x^s}{s}ds +R,
\end{align*}
  where
\begin{align*}
  R \ll \sum_{\substack{ x/2<N(n)<x \\ N(n) \neq x}} \frac{f_w(n)}{N(n)^{1+2\tau}} \min (1, \frac {x}{T|x-N(n)|})+\frac {4^{\sigma}+x^{\sigma}}{T}\sum_{n \equiv 1 \bmod (1+i)^3}\frac {\frac{|f_w(n)|}{N(n)^{1+2\tau}}}{N(n)^{\sigma}}.
\end{align*}
  It is then easy to see that
\begin{align}
\label{Rbound1}
  R \ll \sum_{\substack{ 1 \leq k<x}}\frac {x}{T k}+\frac {x^{-2\tau+\varepsilon}}{T}\ll \frac {x \log x}{T} \ll x^{-2\tau}.
\end{align}
 We now shift the contour of integration to $\sigma_1=-2\tau-1/2-\varepsilon$ to see that we have
\begin{align}
\label{hbound1}
 \frac 1{2 \pi i}\int^{\sigma_1 \pm i T}_{\sigma \pm i T} \zeta_K(1+2\tau+s)F_w(1+2\tau+s)\frac {x^s}{s}ds \ll x^{-2\tau}.
\end{align}
  Moreover, by applying the following subconvexity bound for $\zeta_k(s)$ on the critical line given in \cite{HB1988},
\begin{align*}
 \zeta_K(s) \ll (1+|s|)^{1/3+\varepsilon}, \quad \Re(s)=1/2,
\end{align*}
  we deduce that
\begin{align}
\label{vbound}
 \frac 1{2 \pi i}\int^{\sigma_1+i T}_{\sigma_1-i T} \zeta_K(1+2\tau+s)F_w(1+2\tau+s)\frac {x^s}{s}ds \ll \int^{T}_{-T} (1+|t|)^{1/3+\varepsilon}\frac {x^{-2\tau-1/2+\varepsilon}}{1+|t|}|dt| \ll x^{-2\tau}.
\end{align}
  By combining \eqref{Rbound1}, \eqref{hbound1} and \eqref{vbound}, together with the observation that $\zeta_K(1+2\tau) = \frac {\pi}{4} \cdot 1/(2\tau) +O(1)$, $F_w(1+2\tau) =
F_w(1)+ O(\tau)$, we then conclude that
\begin{align*}
 \sum_{\substack{ N(\gamma) \le x\\ \gamma \equiv 1 \bmod (1+i)^3 }} \frac{f_w(\gamma)}{N(\gamma)^{1+2\tau}} &=
 \zeta_K(1+2\tau)(1-x^{-2\tau})F_w(1+2\tau)
+ O(x^{-2\tau}).
\end{align*}
  If we set for brevity that $F(w)=\eta_\delta(1+2w;1) G(1,1;\delta_1+w,\delta)
G(1,1;\delta_2+w,\delta)F_w(1+2\tau)$, then a little
calculation implies that $F(\tau)=1$ and $F(w) = F(\tau) + O(|w-\tau|)$. This implies \eqref{sumovergammaofg}, which in turn implies \eqref{sumovergammaofgsmooth} by partial summation and this completes the proof of
the lemma.
\end{proof}

\section{Plan of the proof}
\label{sec planofproof}

We define for any large number $X$ and any smooth function $\Phi$ supported in $[1,2]$,
\begin{align*}
  {\mathcal N}(X,\Phi)=\sum_{\substack{(d,2)=1 \\ L(\beta,\chi_{(1+i)^5d})=0, 0< \beta  \leq 1 }}
\mu^2_{[i]}(d)  \Phi(\frac {N(d)}{X}),
\end{align*}
  so that $ {\mathcal N}(X,\Phi)$ is a weighted count on the number of odd,  square-free algebraic integers $d \in \mathcal O_K$ such that $X \leq N(d) \leq 2X$ and that $L(s,\chi_{(1+i)^5d})$ has a non-trivial real zero. The proof of Theorem \ref{thm: nonvanishing} is based on an estimation of ${\mathcal N}(X,\Phi)$. To achieve this, we apply the following argument principle given in \cite[Lemma 2.1]{C&S}, which is originally due to A. Selberg \cite{Selberg46}.
\begin{lemma}
\label{Lemma 2.1}
 Let $f(z)$ be a holomorphic function that does not vanish in the region $\Re(z) \ge W$.
Let ${\mathcal B}$ be the rectangular box with vertices $W_0\pm iH$,
$W_1 \pm iH$ where $H>0$ and $W_0, W_1$ are two real numbers such that $W_0 < W < W_1$.  Then
\begin{align*}
\begin{split}
& 4H
\sum_{\substack{\beta +i\gamma \in {\mathcal B}\\ f(\beta+i\gamma) =0 }}
\cos \Big ( \frac{\pi\gamma}{2H} \Big ) \sinh \Big ( \frac{\pi(\beta-W_0)}{2H} \Big ) \\
=& \int_{-H}^{H} \cos \Big ( \frac{\pi t}{2H}\Big ) \log |f(W_0+it)| dt+\int_{W_0}^{W_1}\sinh \Big ( \frac{\pi (\alpha-W_0)}{2H} \Big )
\log |f(\alpha+iH)f(\alpha-iH)| d\alpha
\\
&-\Re \int_{-H}^{H} \cos \Big(\pi \frac{W_1-W_0 +it}{2iH}\Big)
\log f(W_1+it) dt.
\end{split}
\end{align*}
\end{lemma}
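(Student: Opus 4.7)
The plan is a standard Selberg--Littlewood contour argument. Set $J(z)=\sinh(\pi(z-W_0)/(2H))$, an entire function. The identity
\[
\Re J(\beta + i\gamma) = \sinh\!\left(\frac{\pi(\beta-W_0)}{2H}\right)\cos\!\left(\frac{\pi\gamma}{2H}\right)
\]
shows that the left hand side of the lemma is exactly $4H\sum_\rho m(\rho)\Re J(\rho)$, summed over zeros $\rho=\beta+i\gamma$ of $f$ in $\mathcal B$ counted with multiplicity $m(\rho)$. I may assume (by a harmless small perturbation of $H, W_0, W_1$) that $f$ has no zero on $\partial \mathcal B$, and the hypothesis that $f$ does not vanish in $\Re z\ge W$ confines every zero to the strip $W_0\le \Re z<W$. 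The aim is to compute $\sum_\rho m(\rho)\Re J(\rho)$ as an integral over $\partial \mathcal B$.

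I slit $\mathcal B$ horizontally from each zero $\rho=\beta+i\gamma$ leftward along the segment from $\rho$ to $W_0+i\gamma$; on the resulting slit domain $\log f$ admits a single-valued holomorphic branch. Since the slit domain contains no zeros of $f$, Cauchy's theorem gives $\oint_{\partial \mathcal B_{\mathrm{slit}}} J(z)(f'/f)\,dz=0$, while integration by parts rewrites this integral as $\oint d(J\log f) - \oint J'\log f\,dz$, the first term vanishing by single-valuedness of $J\log f$. Hence $\oint_{\partial \mathcal B_{\mathrm{slit}}} J'(z)\log f(z)\,dz = 0$. Decomposing this contour as $\partial \mathcal B$ together with the slit detours around each zero, and noting that the jump of $\log f$ across each slit is $2\pi i m(\rho)$ while the upper and lower lips are traversed in opposite directions, the detour at $\rho$ contributes $2\pi i\,m(\rho)[J(\rho)-J(W_0+i\gamma_\rho)]$, yielding
\[
\oint_{\partial \mathcal B} J'(z)\log f(z)\,dz = -2\pi i\sum_\rho m(\rho)\bigl[J(\rho)-J(W_0+i\gamma_\rho)\bigr].
\]
Taking imaginary parts, observing that $J(W_0+i\gamma)=i\sin(\pi\gamma/(2H))$ is purely imaginary so $\Re J(W_0+i\gamma)=0$, and using $J'(z)=(\pi/(2H))\cosh(\pi(z-W_0)/(2H))$, I conclude that the left hand side of the lemma equals $-\Im\oint_{\partial \mathcal B}\cosh(\pi(z-W_0)/(2H))\log f(z)\,dz$.

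It remains to evaluate this last integral edge by edge. Using $\cosh(it)=\cos t$ and $\cosh(a\pm i\pi/2)=\pm i\sinh a$, the kernel $\cosh(\pi(z-W_0)/(2H))$ equals $\cos(\pi t/(2H))$ on the left edge (real), and $\pm i\sinh(\pi(\alpha-W_0)/(2H))$ on the top and bottom edges (purely imaginary). In each case only $\Re\log f=\log|f|$ survives after taking $-\Im$ of the integrand (noting $dz=i\,dt$ on the vertical edges and $dz=d\alpha$ on the horizontal edges), and the counterclockwise orientation of $\partial \mathcal B$ delivers precisely the first two terms on the right hand side of the lemma. On the right edge, $\log f$ is single-valued by hypothesis, and the identity $\cosh(\pi(W_1-W_0+it)/(2H))=\cos(\pi(W_1-W_0+it)/(2iH))$ produces the third term. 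The most delicate step is the orientation bookkeeping for the slits in the middle paragraph, where one must verify that the clockwise half-circle around each zero (imposed by the counterclockwise traversal of $\partial \mathcal B_{\mathrm{slit}}$) yields the jump $\log f^{U}-\log f^{L}=+2\pi i m(\rho)$ with the correct sign; the remaining edge-by-edge computations amount to routine trigonometric simplification.
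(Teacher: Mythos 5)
Your proof is correct. Note that the paper does not prove this lemma at all --- it is quoted from Conrey--Soundararajan (who attribute it to Selberg) --- so there is no internal proof to compare against; your argument is precisely the standard Littlewood-type contour computation underlying that citation: the residue identity $\oint_{\partial\mathcal B} J'\log f\,dz=-2\pi i\sum_\rho m(\rho)[J(\rho)-J(W_0+i\gamma_\rho)]$ via horizontal slits, followed by the edge-by-edge evaluation of $-\Im\oint\cosh(\pi(z-W_0)/(2H))\log f(z)\,dz$, and all signs, orientations and constants check out (in particular $4H\cdot\tfrac{1}{2\pi}\cdot\tfrac{\pi}{2H}=1$, and the kernel is real on the left edge and purely imaginary on the horizontal edges, so only $\log|f|$ survives there). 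Two immaterial quibbles: the detour around each zero is essentially a full (not half) circle since the lips approach the zero from the west; and for zeros sharing an ordinate the slits coalesce, but the telescoping of $\int J'\,dz$ gives the same total, and one may also observe that the weight $\cos(\pi\gamma/2H)\sinh(\pi(\beta-W_0)/2H)$ vanishes identically on the three edges where zeros could lie, so the boundary-zero perturbation is harmless.
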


  To proceed further, we define for any sequence $\{a_n\}_{n \in \mathcal O_K}$ of complex numbers and any smooth function $\Phi$ supported in $[1,2]$,
$$
{\mathcal S}(a_d;\Phi) = {\mathcal S}(a_d;\Phi,X) = \frac{1}{X} \sum_{\substack{(d,2)=1 } }
\mu^2_{[i]}(d) a_d \Phi(\frac {N(d)}{X}).
$$
  For a real parameter $1< Y \leq \sqrt{2X}$ to be determined later, we write
\begin{equation*}
\mu_{[i]}^2(n) = M_Y(n) + R_Y(n),
\end{equation*}
with
\begin{equation*}
M_Y(n) = \sum_{\substack{\ell \in G \\ \ell^2 \mid n \\ N(\ell) \leq Y}} \mu_{[i]}(\ell), \ \ \ \ \ R_Y(n) = \sum_{\substack{\ell \in G \\ \ell^2 \mid n \\ N(\ell) > Y}} \mu_{[i]}(\ell).
\end{equation*}
  We then deduce that ${\mathcal S}(a_d;\Phi)={\mathcal S}_M(a_d;\Phi) + O({\mathcal S}_R(a_d;\Phi))$, where
\begin{align*}
{\mathcal S}_M(a_d;\Phi) =& {\mathcal S}_{M,X,Y}(a_d;\Phi)
=\frac{1}{X} \sum_{(d,2)=1} M_Y(d) a_d \Phi\biggl(
\frac {N(d)}X\biggr), \\
{\mathcal S}_R(a_d;\Phi) =& {\mathcal S}_{R,X,Y}(a_d;\Phi) = \frac{1}{X} \sum_{(d,2)=1 }
\Big|R_Y(d) a_d
\Phi\Big(\frac{N(d)}{X}\Big)\Big|.
\end{align*}

  Let $\varepsilon>0$ be a small number and let $b, M$ be two parameters such that $b \in [\epsilon, 1-\epsilon]$ and $X^{\epsilon} \le M \le X$.  Also, let $P(x)$ be a polynomial satisfying $P(0)=P^{\prime}(0)=0$ and $P(b)=1$, $P^{\prime}(b)=0$. We define a sequence $\{\lam(n)\}_{n \in \mathcal O_K}$ such that when $n \equiv 1 \bmod (1+i)^3$  and $N(n) \le M$, we have
\begin{align}
\label{lambdan}
\lam(n) :=
\mu_{[i]}(n) Q\Big(\frac{\log (M/N(n))}{\log M}\Big)
:=
\begin{cases}
\mu_{[i]}(n) & \text {if  } N(n) \le M^{1-b}, \\
\mu_{[i]}(n) P(\frac{\log (M/N(n))}{\log M}) &\text{if } M^{1-b} \le N(n) \le M.\\
\end{cases}
\end{align}
  For other values of $n$, we define $\lam(n)=0$.
 The definition above then implies that $\lambda(n) \ll N(n)^{\epsilon}$ for all $n$.  We use the $\lam(n)$ to define for any odd $d \in \mathcal O_K$, the following mollifier function
\begin{align}
\label{mollifier}
M(s,d) = \sum_{\substack{N(n)\le M \\ n \equiv 1 \bmod (1+i)^3}} \frac{\lam(n)}{N(n)^s} \chi_{(1+i)^5d}(n).
\end{align}

  Further, we define for any complex number $\delta_1$,
$$
{\mathcal W}(\delta_1,\Phi) =
\frac{{\mathcal S}(|L(\tfrac 12+\delta_1,\chi_{(1+i)^5d})M(\tfrac 12
+\delta_1,d)|^2;\Phi)}{{\mathcal S}(1;\Phi)}.
$$We now apply Lemma \ref{Lemma 2.1} to the function
$f(s,d) := L(s,\chi_{(1+i)^5d}) M(s,d)$ with $W_0=\frac 12 -\frac{R}{\log X}$,
$H=\frac{S}{\log X}$, and $W_1=\sigma_0:=1+ 3 \log \log M/\log M$ and  $R, S \in (\epsilon, 1/\epsilon)$
to be chosen later, such that $f(s,d)$ has no zeros in $\Re(s)> \sigma_0$.
Arguing as in \cite[Section 2]{C&S}, we deduce that
\begin{align}
\label{Nbound}
{\mathcal N}(X,\Phi) \le
\frac{X{\mathcal S}(1;\Phi)}{8S \sinh (\frac{\pi R}{2S})}
\Big( J_1(X;\Phi) + J_2(X;\Phi)\Big) + \frac{X}{8S \sinh(\frac{\pi R}{2S}
)} {\mathcal S}(I(d);\Phi),
\end{align}
where
\begin{align}
\label{Jdef}
\begin{split}
J_1(X;\Phi) =& \int_0^S \cos \Big ( \frac{\pi t}{2S} \Big )
\log {\mathcal W}
\Big(-\frac{R}{\log X} + i\frac{t}{\log X};\Phi\Big) dt, \\
J_2(X;\Phi) =& \int_{-R}^{(\sigma_0-\frac 12)\log X}
\sinh \Big( \frac{\pi (x+R)}{2S}\Big)
\log {\mathcal W}\Big(\frac{x}{\log X} + i\frac{S}{\log X};\Phi\Big) dx, \\
I(d) =& -\Re
\int_{-S}^{S} \cos\Big(\pi\frac{(\sigma_0-1/2)\log X +R +it}{2iS}\Big)
\log f\Big(\sigma_0 +i\frac t{\log X},d\Big) dt,
\end{split}
\end{align}

  It remains to estimate the quantities on the right-hand side of \eqref{Nbound}.  We first note the following result concerning  ${\mathcal S}(I(d);\Phi)$, whose proof is given in Section \ref{sec: proof of Prop. 2.1}.
\begin{prop}
\label{Proposition 2.1}
 Let $\Phi$ be a smooth function
supported on $[1,2]$ such that $0 \leq \Phi(t) \ll 1$ and $\int_1^2 \Phi(t) dt \gg 1$.  For large $X>0$, $M\le \sqrt{X}, \sigma_0=1+ 3 \log \log M/\log M$ and $\epsilon \leq \Re(\delta_1)<3/4$,  we have
$$
{\mathcal W}(\delta_1,\Phi) = 1 + O(\Phi_{(2)} X^{\epsilon}
(M^{-2\Re(\delta_1) (1-b)}+ M^{(\frac 12-\Re(\delta_1))(1-b)}X^{-\frac 12}) ).
$$
Moreover, the function $f(s,d)=L(s,\chi_{(1+i)^5d}) M(s,d)$ has no zeros in $\Re(s) > \sigma_0$, and we have
$$
{\mathcal S}(I(d);\Phi) \ll \exp\Big( \pi \frac{(1/2+\epsilon)\log X}{2S}\Big)
M^{-(1-b)} X^{\epsilon}.
$$
\end{prop}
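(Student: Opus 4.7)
The proposition comprises three separate claims: an estimate on the mean value $\mathcal{W}(\delta_1,\Phi)$, non-vanishing of $f(s,d)$ in the region $\Re(s)>\sigma_0$, and a bound on $\mathcal{S}(I(d);\Phi)$. My strategy is to treat each in turn using the approximate functional equation (Lemma~\ref{Lemma 3.2}), Poisson summation (Lemma~\ref{Poissonsumformodd}), the quadratic large sieves (Lemmas~\ref{lem: estimates for character sums} and \ref{lem:2.3}), and the refined Dirichlet-series estimates in Lemmas~\ref{lem: nu-sum as an Euler product}--\ref{Lemma 3.17}.

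For the mean value, since $\chi_{(1+i)^5d}$ is real I would factor $|L(\tfrac12+\delta_1,\chi_{(1+i)^5d})|^2=L(\tfrac12+\delta_1,\chi_{(1+i)^5d})L(\tfrac12+\overline{\delta_1},\chi_{(1+i)^5d})$ and apply Lemma~\ref{Lemma 3.2} with $\delta_2=\overline{\delta_1}$, while expanding $|M(\tfrac12+\delta_1,d)|^2$ as a bilinear Dirichlet sum in variables $m_1,m_2$. Swapping the summation order, the $d$-average is executed by Poisson summation, producing a diagonal contribution (indices with $nm_1m_2=\square$) and an off-diagonal contribution. Evaluating the diagonal via the Euler product of Lemma~\ref{lem: nu-sum as an Euler product} and the subsequent asymptotics of Lemmas~\ref{Lemma 3.16}--\ref{Lemma 3.17} yields $\mathcal{S}(1;\Phi)$ together with a mollifier-tail error of order $\Phi_{(2)}X^{\varepsilon}M^{-2\Re(\delta_1)(1-b)}$, arising from the smooth cutoff in the definition \eqref{lambdan} of $\lambda(n)$. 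The off-diagonal contribution is controlled by Lemma~\ref{lem: version of Lemma 5.4 of Sound} and contributes $\Phi_{(2)}X^{\varepsilon}M^{(\frac12-\Re(\delta_1))(1-b)}X^{-1/2}$. Dividing by $\mathcal{S}(1;\Phi)\asymp 1$ finishes this part.

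For the non-vanishing, the Euler product of $L(s,\chi_{(1+i)^5d})$ shows it is nonzero for $\Re(s)>1$. For $M(s,d)$, every Dirichlet coefficient of $L\cdot M$ at $n\neq 1$ with $N(n)\leq M^{1-b}$ vanishes: for such $n$, every divisor $k$ has $N(k)\leq M^{1-b}$, so $\lambda(k)=\mu_{[i]}(k)$ by \eqref{lambdan}, and $\sum_{k\mid n}\mu_{[i]}(k)=0$. Since the remaining coefficients are $\ll N(n)^{\varepsilon}$,
\[
|L(s,\chi_{(1+i)^5d})M(s,d)-1|\ll\sum_{N(n)>M^{1-b}}\frac{1}{N(n)^{\sigma_0-\varepsilon}}\ll(M^{1-b})^{1-\sigma_0+\varepsilon}(\log M)^{O(1)}<\tfrac{1}{2}
\]
for $M$ sufficiently large; hence $LM\neq 0$ and in particular $M\neq 0$.

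For $\mathcal{S}(I(d);\Phi)$, the kernel satisfies
\[
\Bigl|\cos\Bigl(\pi\frac{(\sigma_0-\tfrac{1}{2})\log X+R+it}{2iS}\Bigr)\Bigr|\ll\exp\Bigl(\pi\frac{(\tfrac{1}{2}+\varepsilon)\log X}{2S}\Bigr),\qquad|t|\le S.
\]
Using $|\log f|\ll|f-1|$ from the previous step, the task reduces to bounding $\mathcal{S}(|LM-1|;\Phi)$ uniformly in $t$. Applying Cauchy-Schwarz followed by Lemma~\ref{lem:2.3}, and exploiting that the Dirichlet coefficients of $LM-1$ are supported on $N(n)>M^{1-b}$ and are $\ll N(n)^{\varepsilon}$, the $n_1n_2=\square$ diagonal contribution in the large sieve delivers the required $M^{-(1-b)+\varepsilon}$ decay. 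Multiplying by the kernel bound and integrating over $|t|\le S$ produces the claimed inequality. The principal obstacle of the proof lies in the first claim: extracting the main term $\mathcal{S}(1;\Phi)$ and the precise $M^{-2\Re(\delta_1)(1-b)}$ error term requires careful handling of the Euler products in Lemma~\ref{lem: nu-sum as an Euler product} combined with the delicate residue analysis of Lemmas~\ref{Lemma 3.16}--\ref{Lemma 3.17}.
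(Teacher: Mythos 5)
Your treatment of the non-vanishing of $f(s,d)$ for $\Re(s)>\sigma_0$ matches the paper's argument, but the other two parts have genuine gaps. For the estimate of ${\mathcal W}(\delta_1,\Phi)$ you propose to run the full mollified-second-moment machinery (Lemma \ref{Lemma 3.2}, Poisson summation, Lemmas \ref{lem: nu-sum as an Euler product}--\ref{Lemma 3.17}). That machinery is calibrated throughout for $\max(|\delta_1|,|\delta_2|)\le\kappa\le 1/100$, and in the residue analysis essentially for $|\delta_1|\asymp 1/\log X$: the contours sit at $\Re(s)=\kappa+1/\log X$, the error terms in Proposition \ref{Proposition 2.3} carry factors like $X^{\kappa+\epsilon}Y^{4\kappa-1}$, and Lemma \ref{Lemma 3.17} relies on expansions such as $\zeta_K(1+2\tau)=\frac{\pi}{4}\cdot\frac{1}{2\tau}+O(1)$. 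Proposition \ref{Proposition 2.1}, by contrast, must cover $\epsilon\le\Re(\delta_1)<3/4$ (and, in the application to $J_2$, values of $\delta_1$ with real part as large as $\sigma_0-\tfrac{1}{2}$); none of the cited lemmas apply in that range, and your two error terms are asserted rather than derived from them. The paper's actual route is much softer and is uniform in the stated range precisely because it avoids the approximate functional equation: write $L(s,\chi_{(1+i)^5d})M(s,d)=1+B(s,d)$, note that the Dirichlet coefficients $b(n)$ of $B$ vanish for $N(n)\le M^{1-b}$ (and for squares of norm $\le M^{2(1-b)}$), split $B=T_1-T_2$ by a $\Gamma$-kernel integral, bound ${\mathcal S}(|B|^2;\Phi)$ by the character large sieve (Lemma \ref{lem: estimates for character sums}) and the moment bounds of Lemma \ref{lem:2.3}, and bound ${\mathcal S}(B;\Phi)$ by a genuine first-moment computation in which only $n=\square$ produces a pole.

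The third part has a quantitative gap. After $\log f=B+O(|B|^2)$ you reduce to bounding ${\mathcal S}(|LM-1|;\Phi)$, i.e.\ you place the absolute value inside the $d$-average. Cauchy--Schwarz then gives ${\mathcal S}(|B|;\Phi)\ll{\mathcal S}(|B|^2;\Phi)^{1/2}\ll M^{-(1-b)/2}X^{\epsilon}$, a factor $M^{(1-b)/2}$ short of the claim. This loss is fatal in the application: with $S=\pi/(2(1-b)(1-20\kappa))$ and $M=X^{1/2-5\kappa}$ the prefactor $\exp\bigl(\pi(1/2+\epsilon)\log X/(2S)\bigr)$ equals $X^{(1/2+\epsilon)(1-b)(1-20\kappa)}$, so only the full saving $M^{-(1-b)}$ renders the last term of \eqref{Nbound} negligible. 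The paper keeps the linear term as $|{\mathcal S}(B;\Phi)|$ --- absolute value outside the $d$-average --- so that the cancellation in $\sum_d\chi_{(1+i)^5d}(n)$ for $n\ne\square$ and the vanishing of $b(n)$ for squares of norm $\le M^{2(1-b)}$ can be exploited; this yields the bound \eqref{B1}, which at $\Re(s)=\sigma_0$ gives the required $M^{-(1-b)}X^{\epsilon}$.
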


  Next, notice that an evaluation on the terms in \eqref{Nbound} involving with $J_1(X;\Phi)$ and $J_2(X;\Phi)$ requires one to study ${\mathcal W}(\delta_1,\Phi)$. As Proposition \ref{Proposition 2.1} enables us to treat ${\mathcal W}(\delta_1,\Phi)$ when $\Re(\delta_1)$ is slightly away from $0$, we may focus on the case when $\delta_1$ is near $0$. For this purpose,   we shall evaluate more generally the following expression:
\begin{align}
\label{Sgen}
{\mathcal S}(\xi(\tfrac 12+\delta_1,\chi_{(1+i)^5d}) \xi(\tfrac 12+\delta_2,\chi_{(1+i)^5d})
M(\tfrac 12+\delta_1,d)M(\tfrac 12+\delta_2,d);\Psi),
\end{align}
where $\xi$ is given in \eqref{xidef}, $\Psi$ is a smooth function supported on $[1,2]$ and $\delta_1, \delta_2$ satisfy the conditions given in Section \ref{sect: apprfcneqn}.
  Later, we shall set $\delta_2=\overline{\delta_1}$ and $\Psi(t)= \Phi(t)t^{-\tau}$ in \eqref{Sgen} to retrieve the expression
${\mathcal S}(1;\Phi)(2^5X/\pi^2)^{\tau}\Gamma_\delta(\tau){\mathcal W}(\delta_1,\Phi)$.

 In order to evaluate \eqref{Sgen}, we apply the approximate functional equation for $
\xi(\frac 12+\delta_1,\chi_{(1+i)^5d})\xi(\frac 12+\delta_2,\chi_{(1+i)^5d})$ to see that we may recast the expression in \eqref{Sgen} as
\begin{align*}
{\mathcal S}_M(A_{\delta,\tau}(d)M(\tfrac 12+\delta_1,d)M(\tfrac 12
+\delta_2,d);\Psi)+{\mathcal S}_R(A_{\delta,\tau}(d)M(\tfrac 12+\delta_1,d)M(\tfrac 12
+\delta_2,d);\Psi).
\end{align*}

  In Section \ref{sec: SR}, we obtain the following estimation for ${\mathcal S}_R$.
\begin{prop}
\label{Proposition 2.2}  With the above notations, we have for $M\le \sqrt{X}$,
$$
{\mathcal S}_R(A_{\delta,\tau}(d)M(\tfrac 12+\delta_1,d)M(\tfrac 12
+\delta_2,d);\Psi) \ll X^{\kappa+\epsilon}
\Big(\frac{1}{Y} +\frac{M^{-\Re(\delta_1)}+M^{-\Re(\delta_2)}+M^{-2\Re(\tau)}}
{Y^{\frac 12}} + \frac {M^{1-2\Re(\tau)}}{X^{\frac 12}}\Big).
$$
\end{prop}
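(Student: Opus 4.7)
The approach is to extract the squared divisor $\ell^2\mid d$ with $N(\ell)>Y$ explicitly. Bounding $|R_Y(d)|\le\sum_{\substack{\ell\in G,\,(\ell,2)=1\\ N(\ell)>Y,\,\ell^2\mid d}}1$, substituting $d=\ell^2d'$ with $(d',2)=1$, and swapping orders of summation converts $\mathcal S_R$ into
\[
\mathcal S_R\le\frac{1}{X}\sum_{\substack{\ell\in G,\,(\ell,2)=1\\ Y<N(\ell)\le\sqrt{2X}}}T(\ell),
\]
where $T(\ell)=\sum_{d':\,N(d')\asymp Q}|A_{\delta,\tau}(\ell^2d')M(\tfrac12+\delta_1,\ell^2d')M(\tfrac12+\delta_2,\ell^2d')|$ with $Q:=X/N(\ell)^2$. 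The task reduces to a uniform bound for $T(\ell)$ that upon summation in $\ell$ reproduces the three summands of the claim.

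For $T(\ell)$, Lemma~\ref{Lemma 3.2} and \eqref{xidef} give $|A_{\delta,\tau}(\ell^2d')|\ll X^{\Re(\tau)+\epsilon}|L_1L_2|$ with $L_j=L(\tfrac12+\delta_j,\chi_{(1+i)^5\ell^2d'})$; replacing $\chi_{\ell^2}$ by the principal character only removes a finite Euler product over primes dividing $\ell$, at cost $N(\ell)^\epsilon$. A Cauchy--Schwarz separation then yields
\[
T(\ell)\ll X^{\Re(\tau)+\epsilon}\Big(\sum_{d'\asymp Q}|L_1L_2|^2\Big)^{1/2}\Big(\sum_{d'\asymp Q}|M_1M_2|^2\Big)^{1/2},
\]
where $M_j=M(\tfrac12+\delta_j,d')$. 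A further Cauchy--Schwarz combined with Lemma~\ref{lem:2.3} bounds the first factor by $Q^{1/2+\epsilon}$. For the second factor, I expand $M_1M_2$ as a Dirichlet polynomial in $\chi_{(1+i)^5d'}$ of length $\le M^2$ and invoke the large sieve of Lemma~\ref{lem: estimates for character sums}, which gives $\sum_{d'\asymp Q}|M_1M_2|^2\ll(QM)^\epsilon(Q+M^2)\,\Delta$, with $\Delta$ the diagonal contribution from index pairs satisfying $n_1n_2=\square$.

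The delicate step is the shift-sensitive evaluation of $\Delta$. Because $\lambda$ is supported on square-free arguments, $\Delta$ reduces to a restricted multiplicative sum that admits an Euler-product factorisation in the three shift parameters $\delta_1,\delta_2,\tau=(\delta_1+\delta_2)/2$, of the type appearing in Lemma~\ref{lem: nu-sum as an Euler product}. A Perron-type contour argument in the spirit of Lemma~\ref{Lemma 3.16} then identifies three distinct polar contributions bounded respectively by $M^{-2\Re(\tau)+\epsilon}$, $M^{-\Re(\delta_1)+\epsilon}$ and $M^{-\Re(\delta_2)+\epsilon}$. Inserting this together with the large-sieve factor $(Q+M^2)^{1/2}\asymp Q^{1/2}+M$, dividing by $X$ and summing over $\ell$ via $\sum_{N(\ell)>Y}N(\ell)^{-2}\ll Y^{-1}$ and the corresponding $Y^{-1/2}$-estimates that emerge from combining the two factors $(Q+M^2)^{1/2}$ and $\Delta^{1/2}$ before summation, then reconstructs the three summands $Y^{-1}$, $(M^{-\Re(\delta_j)}+M^{-2\Re(\tau)})Y^{-1/2}$ and $M^{1-2\Re(\tau)}X^{-1/2}$, with the prefactor $X^{\kappa+\epsilon}$ absorbing the $X^{\Re(\tau)}$ growth (since $|\tau|\le\kappa$) and the polylogarithmic and $N(\ell)^\epsilon$ losses. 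The main obstacle is precisely the shift-by-shift evaluation of $\Delta$: a naive bound would collapse the three distinct polar contributions into a single $M^{-2\Re(\tau)+\epsilon}$ factor, losing the separate $M^{-\Re(\delta_j)}$ savings that are essential for sharpness in the subsequent use of Proposition~\ref{Proposition 2.2}.
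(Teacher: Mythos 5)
Your overall skeleton (pull out the square divisor $\ell^2\mid d$ with $N(\ell)>Y$, separate $A_{\delta,\tau}$ from the mollifiers by Cauchy--Schwarz, then apply large-sieve moment bounds and sum over $\ell$) is the same as the paper's, but two of your key steps have genuine gaps. First, you invoke Lemma \ref{Lemma 3.2} to write $|A_{\delta,\tau}(\ell^2 d')|\ll X^{\Re(\tau)+\epsilon}|L_1L_2|$ with $L_j=L(\tfrac12+\delta_j,\chi_{(1+i)^5\ell^2d'})$. Lemma \ref{Lemma 3.2} rests on the functional equation \eqref{fcneqnforxi}, which holds only for \emph{square-free} $d$; for $d=\ell^2d'$ the character is imprimitive, the extra Euler factor $\mathcal E(s,\ell)$ is not invariant under $s\to-s$, and the clean identity $A_{\delta,\tau}=\xi\cdot\xi$ fails. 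Moreover, even granting such a bound, your $L$-values sit at $\Re(s)=\tfrac12+\Re(\delta_j)$, which can be strictly less than $\tfrac12$ (the proposition is used with $\Re(\delta_1)$ slightly negative), so Lemma \ref{lem:2.3} does not apply there. The paper sidesteps both issues by working with the contour representation of $A_{\delta,\tau}(\ell^2m)$, factoring the Dirichlet series as $L(\tfrac12+s\pm\delta,\chi_{(1+i)^5m})\mathcal E(s,\ell)$ with $\chi_{(1+i)^5m}$ non-principal, and shifting to $\Re(s)=\kappa+1/\log X$; this is exactly why the final bound carries $X^{\kappa}$ rather than $X^{\Re(\tau)}$, and it puts the $L$-values at $\Re\ge\tfrac12$ where the fourth-moment bound is valid.

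Second, your treatment of the mollifier factor does not reconstruct the stated bound. The three terms $M^{-\Re(\delta_1)}$, $M^{-\Re(\delta_2)}$, $M^{-2\Re(\tau)}$ do not arise from three poles in a single diagonal sum $\Delta$: the paper gets them by applying Cauchy--Schwarz \emph{twice}, bounding $\sum_m|M_j|^4$ separately for $j=1,2$ via \eqref{Mest}, so that the $m$-sum is bounded by a product of two factors, one per shift, of the form $Q^{1/4}+Q^{1/4}M^{-\Re(\delta_j)}+M^{1/2-\Re(\delta_j)}$; expanding that product and summing over $N(\ell)>Y$ yields precisely the three displayed summands. In your version $\Delta$ is a sum of absolute values, enters only through $\Delta^{1/2}$, and the exponents you claim for its ``polar contributions'' ($M^{-\Re(\delta_j)}$ rather than $M^{-2\Re(\delta_j)}$) would, after the square root, produce $M^{-\Re(\delta_j)/2}$, which for $\Re(\delta_j)>0$ is \emph{larger} than the target $M^{-\Re(\delta_j)}$. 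You would need to carry out this diagonal evaluation explicitly and check that $((Q+M^2)\Delta)^{1/2}$ really matches the product structure; as written, the bookkeeping does not close. Your closing remark about a ``naive bound collapsing the three contributions'' suggests you are looking for the shift-separation in the wrong place: it comes from the two independent fourth moments, not from a residue computation on the large-sieve diagonal.
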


  The treatment on ${\mathcal S}_M(A_{\delta,\tau}(d) M(\tfrac 12+
\delta_1,d)M(\frac 12+\delta_2,d);\Psi)$ is more involved.  Our proof in fact requires us to evaluate
more generally ${\mathcal S}_M(A_{\delta,\tau}(d) \leg{(1+i)^5d}{l};\Psi)$
for any odd primary $l \in \mathcal O_K$.  To state our result, we introduce a
few notations.  We define for any two complex numbers
$s$ and $w$,
\begin{align}
\label{Zswdef}
Z(s;w) = \zeta_K(s-2w)\zeta_K(s) \zeta_K(s+2w).
\end{align}

 Our evaluation on $S_M$ is given in the following result.
\begin{prop}
\label{Proposition 2.3}  With the above notations and writing any odd primary $l \in \mathcal O_K$ as $l=l_1l_2^2$, with $l_1$ being primary, square-free and $l_2 \in G $, we have
\begin{align}
\label{SMasymp}
\begin{split}
{\mathcal S}_M\Big(\leg{(1+i)^5d}{l} A_{\delta,\tau}(d);\Psi\Big)
=&  \frac{2}{3\zeta_K(2) \sqrt{N(l_1)}}
\sum_{\mu=\pm} \Big( r_\delta(l_1)\Gamma_\delta(\mu\tau) \leg{2^5X}{N(l_1)
\pi^2}^{\mu \tau} {\widehat \Psi}(\mu \tau+1) Z(1+2\mu\tau;\delta)
\eta_\delta(1+2\mu\tau;l)\\
&+ r_\tau(l_1) \Gamma_\tau(\mu\delta) \leg{2^5X}{N(l_1)\pi^2}^{\mu\delta} {\widehat \Psi}
(\mu\delta+1)
Z(1+2\mu\delta;\tau)\eta_\tau(1+2\mu\delta;l)\Big)
\\
&+ {\mathcal R}(l) + O\Big(\frac{|r_\delta(l_1)|X^{\epsilon}}{(XN(l_1))^{\frac 14}}
+\frac{N(l)^\epsilon X^{\kappa +\epsilon}
 N(l_1)^{2\kappa-\frac 12}}{Y^{1-4\kappa}}\Big).
\end{split}
\end{align}
Here ${\mathcal R}(l)$ is a remainder term bounded on average by
\begin{align}
\label{Rbound}
\sum_{\substack{l \equiv 1 \bmod (1+i)^3 \\ L \leq N(l) \leq 2L-1}} |{\mathcal R}(l)| \ll \Big ( \frac{L^{1+\epsilon}Y^{1+\epsilon}}
{X^{\frac 12-|\Re(\delta)|-\epsilon} } +\frac{L^{1+\kappa+\epsilon}Y^{2\kappa+\epsilon}}
{X^{\frac 12-|\Re(\delta)|-\epsilon} } \Big ) \Psi_{(3)}\Psi_{(4)}^{\epsilon}.
\end{align}
\end{prop}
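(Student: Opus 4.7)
\textbf{Proof plan for Proposition \ref{Proposition 2.3}.}
The strategy is to open both $M_Y(d)$ and the Dirichlet-series definition of $A_{\delta,\tau}(d)$, and then apply Poisson summation in the $d$-variable. Writing $d = \ell^2 a$ with $\ell \in G$, $N(\ell) \le Y$, $(\ell a, 2) = 1$, and recognizing \eqref{Fdef}, one recasts the quantity as
\begin{align*}
\mathcal{S}_M\Big(\leg{(1+i)^5d}{l} A_{\delta,\tau}(d);\Psi\Big) = \frac{1}{X} \sum_{\substack{\ell \in G,\ N(\ell) \le Y \\ (\ell,2)=1}} \mu_{[i]}(\ell) \sum_{n \equiv 1 \bmod (1+i)^3} \frac{r_\delta(n)}{\sqrt{N(n)}} \leg{(1+i)^5 \ell^2}{ln} \sum_{(a,2)=1} \leg{a}{ln} F_{N(n)}\Big(\frac{N(\ell^2 a)}{X}\Big).
\end{align*}
Lemma \ref{Poissonsumformodd}, applied to the innermost $a$-sum with effective scale $X/N(\ell)^2$, converts it into a sum over $k \in \mathcal{O}_K$ weighted by $g(k, ln)$ and the Hankel transform $\widetilde{F}_{N(n)}\bigl(\sqrt{N(k)X/(2 N(\ell)^2 N(ln))}\bigr)$. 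The contribution of $k = 0$ will give the four main terms of \eqref{SMasymp}, while $k \neq 0$ yields $\mathcal{R}(l)$.

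For the $k = 0$ piece, Lemma \ref{Gausssum}(ii) with $h = \infty$ forces $ln$ to be a perfect square, which (writing $l = l_1 l_2^2$) restricts $n = l_1 m^2$ with $m$ primary. Substituting the Mellin representation \eqref{Wdt} of $W_{\delta,\tau}$ and summing the resulting $m$-series as an Euler product produces the factor $Z(1 + 2s; \delta)$ together with local corrections at primes dividing $2l$ that assemble into $\eta_\delta(1 + 2s; l)$. After pulling out $r_\delta(l_1)/\sqrt{N(l_1)}$, one is left with a contour integral whose $s$-integrand carries the poles $s = \pm \tau$ from the factor $2s/(s^2 - \tau^2)$ in $W_{\delta,\tau}$ and the poles $s = \pm \delta$ from $\zeta_K(1 + 2s \mp 2\delta)$ inside $Z(1 + 2s; \delta)$. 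Shifting the contour past these four simple poles produces, directly at $s = \pm\tau$, the contributions $r_\delta(l_1)\Gamma_\delta(\pm\tau)\widehat{\Psi}(\pm\tau + 1)Z(1 \pm 2\tau;\delta)\eta_\delta(1 \pm 2\tau; l)$ of \eqref{SMasymp}; the residues at $s = \pm\delta$, after applying the $\zeta_K$ functional equation \eqref{fcneqnforzeta} to reindex symmetrically and swap the roles of $\delta$ and $\tau$, yield the companion contributions $r_\tau(l_1)\Gamma_\tau(\pm\delta)\widehat{\Psi}(\pm\delta + 1)Z(1 \pm 2\delta;\tau)\eta_\tau(1 \pm 2\delta; l)$. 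The outer sum $\sum_{N(\ell) \le Y} \mu_{[i]}(\ell)/N(\ell)^2 = 1/\zeta_K(2) + O(Y^{-1})$ (the tail being absorbed in the $Y^{-(1-4\kappa)}$ error), combined with the local factor at $(1+i)$ and the volume constant from Lemma \ref{Poissonsumformodd}, reconstructs the prefactor $2/(3\zeta_K(2))$.

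For the $k \neq 0$ piece, write $k = k_1 k_2^2$ with $k_1$ square-free, apply Lemma \ref{lem: nu-sum as an Euler product} to factor the $n$-sum as $L(s - \delta, \chi_{ik_1}) L(s + \delta, \chi_{ik_1}) \mathcal{G}_\delta(s; k, l, \ell)$, and use Lemma \ref{lem: properties of h(xi,w)} to express the Hankel transform through the Mellin kernel $h(\xi, w)$ with outer $w$-contour shifted to $\Re(w) = -1/2 + |\Re(\delta)| + \varepsilon$. On this line the $L$-factors sit just above the critical line, so that Cauchy--Schwarz together with the fourth-moment large-sieve bound (Lemma \ref{lem:2.3}) applied to the $L$-factors and the $\mathcal{G}_\delta$-estimate from Lemma \ref{lem: version of Lemma 5.4 of Sound} applied to the dyadic sum over $k$ and $l$ yield \eqref{Rbound} after a dyadic decomposition in $N(l)$. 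The pointwise auxiliary errors in \eqref{SMasymp} absorb the configurations with $(n, \ell l) > 1$, handled by trivial bounds on $r_\delta$ and $W_{\delta,\tau}$, and the contribution of the $\ell$-tail past $Y$.

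The principal difficulty is the residue analysis of the $k = 0$ step: computing all four residues at $s = \pm\tau$ and $s = \pm\delta$ and demonstrating that they assemble into the $\delta \leftrightarrow \tau$-symmetric form of \eqref{SMasymp} requires careful deployment of \eqref{fcneqnforzeta} and matching the Euler factors produced by the $m$-sum against the definitions of $\eta_\delta(s; l)$ and $\eta_\tau(s; l)$ at the pairs of poles. A secondary technical point concerns the subcase $k_1 = 1$ in the $k \neq 0$ analysis, where $L(s, \chi_{ik_1}) = \zeta_K(s)$ has a pole and contributes main-term-sized residues that must be shown either to combine with the $k = 0$ main terms or to cancel on average over $\ell$ via $\mu_{[i]}(\ell)$, so as to remain within the error budget recorded in \eqref{SMasymp}.
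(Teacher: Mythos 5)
Your overall framework (expand $M_Y$, apply Poisson summation in $d$, split at $k=0$ versus $k\neq0$, evaluate the principal term by a contour shift, bound the remainder by the large sieve) does match the paper's strategy. However, there is a genuine gap in where the second set of main terms with prefactor $r_\tau(l_1)$ comes from, and this error is structural, not cosmetic.

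You assert that the $k=0$ piece alone produces all four main terms, and specifically that the residues at $s=\pm\delta$ ``after applying the $\zeta_K$ functional equation to reindex symmetrically and swap the roles of $\delta$ and $\tau$'' become the $r_\tau(l_1)\Gamma_\tau(\pm\delta)Z(1\pm2\delta;\tau)\eta_\tau(1\pm2\delta;l)$ terms. That cannot work. In the $k=0$ integral $I(l)$ the integrand is $r_\delta(l_1)\Gamma_\delta(s)(2^{5/2}/\pi)^{2s}(X/N(l_1))^s\widehat\Psi(1+s)\frac{2s}{s^2-\tau^2}Z(1+2s;\delta)\eta_\delta(1+2s;l)$; the whole thing is proportional to $r_\delta(l_1)$, and the residue at $s=\delta$ carries $\Gamma_\delta(\delta)=\Gamma(\frac12+2\delta)\Gamma(\frac12)$ rather than $\Gamma_\tau(\delta)=\Gamma(\frac12+\delta+\tau)\Gamma(\frac12+\delta-\tau)$, and $\eta_\delta(1+2\delta;l)$ rather than $\eta_\tau(1+2\delta;l)$. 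No application of \eqref{fcneqnforzeta} to a single residue replaces $r_\delta(l_1)$ by $r_\tau(l_1)$ or changes the subscript in $\Gamma$ and $\eta$; the residue at $s=\pm\delta$ simply is not the $r_\tau$ main term.

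What actually happens in the paper is that the residues of $I(l)$ at $s=\pm\delta$ are \emph{cancelled} by companion residues coming from the $k\neq0$ analysis, and the $r_\tau(l_1)$ main terms are \emph{produced} by that same analysis. Concretely: after factoring the $n$-sum with Lemma \ref{lem: nu-sum as an Euler product} and shifting the $w$-contour to $\Re(w)=-\tfrac12+|\Re(\delta)|+\epsilon$, one picks up poles precisely when $\chi_{ik_1}$ is principal, i.e.\ when $k=\pm ik_2^2$, at $w=\pm\delta$; these give the secondary terms $\mathcal{P}_\pm(l)$ in \eqref{P}. Summing the resulting $k_2$-series and $\alpha$-series yields the function $\mathcal{H}$, and the decisive identity \eqref{Jindentity}, which is a consequence of the functional equation for $\zeta_K$ and Legendre duplication, shows that $\zeta_K(1+2v)\mathcal{J}(s,v;l)$ equals a constant times $r_s(l_1)\Gamma_s(v)Z(1+2v;s)\eta_s(1+2v;l)$ --- this is where the $\delta\leftrightarrow\tau$ swap truly lives. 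The residue of $\mathcal{J}(s,\mu\delta;l)\frac{2s}{s^2-\tau^2}$ at $s=\mu\delta$ then exactly cancels the $s=\pm\delta$ residues of $I(l)$, while the evaluation $\mathcal{J}(\tau,\mu\delta;l)$ delivers the $r_\tau(l_1)\Gamma_\tau(\mu\delta)Z(1+2\mu\delta;\tau)\eta_\tau(1+2\mu\delta;l)$ block. You flag the principal-character case at the end as a ``secondary technical point'' to be absorbed into the error; in fact it is half the main term and is load-bearing. As written, your $k=0$ analysis both overcounts (you cannot extract $r_\tau$ from it) and undercounts (you discard the $\mathcal{P}_\pm(l)$ contributions that are needed), so the proposal does not close.
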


   With Propositions \ref{Proposition 2.1}-\ref{Proposition 2.3} available, we are able to obtain the following result concerning ${\mathcal W}(\delta_1,\Phi)$ for $|\delta_1|$ being small.
\begin{proposition}
\label{Proposition 2.4}  Let $\Phi$ be a non-negative
smooth function on $[1,2]$ satisfying $\Phi(t) \ll 1$ and $\int_1^2 \Phi(t) dt\gg 1$.
Let $\delta_1$ be a complex number such
that $\Re(\delta_1) \ge -\frac{1}{\epsilon \log X}$, $ \kappa  \geq |\delta_1|
\ge \frac{\epsilon}{\log X}$.  We take $\delta_2=\overline{\delta_1}$
so that $\tau= \Re(\delta_1)$, and $\delta= i\Im(\delta_1)$.
Then with the mollifier function being given in \eqref{mollifier}, we have for $M=X^{\frac 12-5\kappa}$,
\begin{align*}
{\mathcal W}(\delta_1,\Phi)=& 1+ \Big( \frac{1-(2^5X/\pi^2)^{-2\tau}}{2\tau \log M}
-\leg{2^5X}{\pi^2}^{-\tau} \frac{(2^5X/\pi^2)^{\delta}-(2^5X/\pi^2)^{-\delta}}{2\delta
\log M}\Big)
\int_0^b M^{-2\tau(1-x)} \Big| Q^{\prime}(x) + \frac{Q^{\prime \prime}(x)}{
2\delta_1 \log M}\Big|^2 dx \\
&+O(X^{-\kappa-\epsilon} \Phi_{(3)}
\Phi_{(4)}^{\epsilon}+M^{-2\tau(1-b)}|\delta_1|^6 \log^5 X).
\end{align*}
\end{proposition}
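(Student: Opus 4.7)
The plan is to start from the identity
\[
{\mathcal S}(1;\Phi)\,\big(\tfrac{2^5X}{\pi^2}\big)^{\tau}\Gamma_\delta(\tau)\,{\mathcal W}(\delta_1,\Phi) = {\mathcal S}\big(\xi(\tfrac 12+\delta_1,\chi_{(1+i)^5d})\,\xi(\tfrac 12+\delta_2,\chi_{(1+i)^5d})\,M(\tfrac 12+\delta_1,d)\,M(\tfrac 12+\delta_2,d);\Psi\big),
\]
valid with $\delta_2=\overline{\delta_1}$ and $\Psi(t)=\Phi(t)t^{-\tau}$, reducing the task to evaluating the right-hand side. Applying Lemma \ref{Lemma 3.2} replaces the $\xi\cdot\xi$ factor by $A_{\delta,\tau}(d)$, and decomposing $\mu^2_{[i]}=M_Y+R_Y$ produces the split ${\mathcal S}_M + O({\mathcal S}_R)$. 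Proposition \ref{Proposition 2.2} bounds ${\mathcal S}_R$; with the choices $M=X^{1/2-5\kappa}$ and $Y$ of the form $X^{1/2-O(\kappa)}$, the remainder contribution is absorbed into the $O(X^{-\kappa-\epsilon}\Phi_{(3)}\Phi_{(4)}^\epsilon)$ error.

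For ${\mathcal S}_M$, I would first expand
\[
M(\tfrac 12+\delta_1,d)\,M(\tfrac 12+\delta_2,d) = \sum_{n_1,n_2 \equiv 1 \bmod (1+i)^3} \frac{\lam(n_1)\lam(n_2)}{N(n_1)^{1/2+\delta_1}N(n_2)^{1/2+\delta_2}}\leg{(1+i)^5d}{n_1 n_2},
\]
and apply Proposition \ref{Proposition 2.3} with $l=n_1n_2$ to each pair. The averaged remainder bound \eqref{Rbound}, organized dyadically over $L=N(l)\leq M^2=X^{1-10\kappa}$, together with the pointwise error $|r_\delta(l_1)|X^\epsilon/(XN(l_1))^{1/4}$ and the $Y$-dependent error in \eqref{SMasymp}, sum to an acceptable total under our choices of $M$ and $Y$.

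The four main contributions from \eqref{SMasymp}, indexed by $\mu=\pm$ and by whether the Gamma factor is $\Gamma_\delta(\mu\tau)$ or $\Gamma_\tau(\mu\delta)$, must now be summed over $n_1,n_2$. Writing $l=n_1n_2$ uniquely as $l_1 l_2^2$ with $l_1$ square-free and invoking Lemma \ref{lem: nu-sum as an Euler product} to combine the $\eta$-factor with the Dirichlet series in $\lam$, I would convert each contribution into a contour integral over a small contour ${\mathcal C}$ enclosing $\pm\tau$ (resp.\ $\pm\delta$). The resulting sum over the square-free kernel $\gamma$ is exactly the object treated in Lemma \ref{Lemma 3.17}; its smooth cut-off version \eqref{sumovergammaofgsmooth}, combined with Lemma \ref{Lemma 3.16} applied to encode $\lam$ via a Mellin integral weighted by $Q$, produces a residue at $s=0$ whose first two Laurent terms correspond respectively to $Q'(x)$ and $Q''(x)/(2\delta_1\log M)$.

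The main obstacle will be the bookkeeping that assembles the four $\mu$- and $\Gamma$-indexed residues into the symmetric factor $\big(1-(2^5X/\pi^2)^{-2\tau}\big)/(2\tau\log M) - (2^5X/\pi^2)^{-\tau}\big((2^5X/\pi^2)^{\delta}-(2^5X/\pi^2)^{-\delta}\big)/(2\delta\log M)$ multiplied by $|Q'(x)+Q''(x)/(2\delta_1\log M)|^2$, after dividing through by $(2^5X/\pi^2)^{\tau}\Gamma_\delta(\tau){\mathcal S}(1;\Phi)$. One must simultaneously verify that the Laurent terms with $k\geq 2$ in Lemma \ref{Lemma 3.16}, together with the errors $O(|w-\tau|)$ and $O(x^{-2\tau})$ in Lemma \ref{Lemma 3.17}, contribute at most $O(M^{-2\tau(1-b)}|\delta_1|^6\log^5 X)$; this uses crucially the normalisations $P(b)=1$, $P'(b)=0$ so that $Q^{(j)}(b)=0$ for $j=0,1$, together with the bound $|\delta_1|\leq\kappa$. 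The claimed asymptotic then follows.
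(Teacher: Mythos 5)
Your route is the paper's route: the identity ${\mathcal S}(1;\Phi)(2^5X/\pi^2)^{\tau}\Gamma_\delta(\tau){\mathcal W}(\delta_1,\Phi)={\mathcal S}(A_{\delta,\tau}(d)|M(\tfrac12+\delta_1,d)|^2;\Phi_{-\tau})$ via Lemma \ref{Lemma 3.2}, the split ${\mathcal S}_M+O({\mathcal S}_R)$ with Proposition \ref{Proposition 2.2}, Proposition \ref{Proposition 2.3} applied to $l=n_1n_2$, and then Lemmas \ref{Lemma 3.16} and \ref{Lemma 3.17} on a contour ${\mathcal C}$ around $\pm\tau$ (resp. $\pm\delta$) to assemble the main term. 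However, your choice of $Y$ is wrong and breaks the error analysis. You propose $Y$ of the form $X^{1/2-O(\kappa)}$, but the averaged remainder bound \eqref{Rbound} grows like $Y^{1+\epsilon}$: summing it dyadically over $L\le M^2$ against the weight $N(l)^{-1/2-\tau+\epsilon}$ coming from the mollifier coefficients gives a contribution of order $M^{1+\epsilon}Y^{1+\epsilon}X^{-1/2+\epsilon}$ (recall $|\Re(\delta)|=0$ here since $\delta=i\Im(\delta_1)$). With $M=X^{1/2-5\kappa}$ and your $Y$, this is $X^{1/2-O(\kappa)}$, i.e.\ a positive power of $X$ that swamps the main term, not something absorbed into $O(X^{-\kappa-\epsilon}\Phi_{(3)}\Phi_{(4)}^{\epsilon})$. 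The parameter $Y$ is squeezed from both sides: it must be large enough that the $Y^{-1/2}$ term in Proposition \ref{Proposition 2.2} and the $Y^{-(1-4\kappa)}$ error in \eqref{SMasymp} are $\ll X^{-\kappa-\epsilon}$, yet small enough that the $Y^{1+\epsilon}$ growth in \eqref{Rbound} stays under control. The value that threads this needle is $Y=X^{4\kappa}$ (as the paper takes in \eqref{SW2}), which yields exactly $X^{-\kappa+\epsilon}$ on both sides; your choice satisfies the first constraint but fails the second catastrophically.

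Two smaller points. First, your normalisation statement is garbled: $P(b)=1$ forces $Q(b)=1$, not $Q^{(j)}(b)=0$ for $j=0,1$; the conditions actually exploited in the residue computation are $Q(0)=Q'(0)=0$ together with $Q(b)=1$, $Q'(b)=0$ (the latter pair is what makes the boundary terms in the integration by parts for $N(\pm\tau)$ come out as stated). Second, the lemma that converts the $\eta_\delta(1+2w;l)$ factor into the multiplicative data $h_w$, $H_w$ is not Lemma \ref{lem: nu-sum as an Euler product} but the explicit factorisation $\eta_{\delta}(1+2w;\alpha^2ab)=\eta_\delta(1+2w;1)h_w(\alpha)^{-1}h_w(a)^{-1}h_w(b)^{-1}\prod_{\varpi|\alpha}(1+N(\varpi)^{-1-2w})$, after which Lemma \ref{Lemma 3.16} handles the $a$- and $b$-sums and Lemma \ref{Lemma 3.17} the $\gamma$-sum; this is bookkeeping you have correctly anticipated but not carried out.
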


  The proofs of Propositions \ref{Proposition 2.1}-\ref{Proposition 2.4} will occupy major parts in the rest of the paper. In the end, the proof of Theorem \ref{thm: nonvanishing} follows by gathering these results together with suitable choices for the parameters involved.

\section{Proofs of Propositions \ref{Proposition 2.1} and \ref{Proposition 2.2}}

\subsection{Proof of Proposition \ref{Proposition 2.1}}
\label{sec: proof of Prop. 2.1}

   In this section we estimate ${\mathcal S}(I(d);\Phi)$ by proving Proposition \ref{Proposition 2.1}. We note first that similar to the bounds given in \cite[(4.1), (4.2)]{C&S},  it follows from
Lemma \ref{lem: estimates for character sums} that for any odd $l \in \mathcal O_K$ such that $N(l) \le \sqrt{2X}$, we have
\begin{align}
\label{Mest}
\begin{split}
\sum_{X\le N(d)\le 2X} \mu^2_{[i]}((1+i)d) |M(s,d)|^4
\ll & X^{\epsilon} (X+ XM^{2(1-2\Re(s))} +M^{4(1-\Re(s))}),  \\
\sum_{X/N(l)^2 \le N(m) \le 2X/N(l)^2} \mu^2_{[i]}((1+i)m) |M(s,l^2m)|^4
\ll&  X^{\epsilon} \Big(\frac{X}{N(l)^2} + \frac{X}{N(l)^2} M^{2(1-2\Re(s))}
+ M^{4(1-\Re(s))}\Big).
\end{split}
\end{align}

  Now, we write $B(s,d) = L(s,\chi_{(1+i)^5d}) M(s,d) -1$ and we observe that it follows from \cite[Section 3.1]{G&Zhao4} and partial summation that we have
\begin{align}
\label{S1}
\begin{split}
{\mathcal S}(1; \Phi)=\frac{2\pi{\widehat \Phi}(1)}{3\zeta_K(2)}+O(X^{-1/2}) \gg 1.
\end{split}
\end{align}
  Using the above, we see that
\begin{align}
\label{W-1}
{\mathcal W}(\delta_1,\Phi) = 1+ O({\mathcal S}(B(\tfrac 12+\delta_1,d);\Phi)
+ {\mathcal S}(|B(\tfrac 12+\delta_1,d)|^2;\Phi)). \
\end{align}
To estimate the error terms above, for any real number $c> \frac 12 -\Re(\delta_1)$, consider the integral
$$
\frac{1}{2\pi i} \int\limits_{(c)} \Gamma(s) B(\tfrac 12+\delta_1+s,d) X^s
ds.
$$
 By moving the line of integration above to $\Re(s) = -\Re(\delta_1)$, we see that the
pole at $s=0$ contributes $B(\frac 12+\delta_1;d)$. This implies that
\begin{align*}
B(\tfrac 12+\delta_1,d) =& \frac{1}{2\pi i} \int\limits_{(c)}
\Gamma(s) B(\tfrac 12+\delta_1+s,d) X^s ds
- \frac{1}{2\pi i}\int\limits_{(-\Re(\delta_1))}
\Gamma(s) B(\tfrac 12+\delta_1+s,d) X^s ds \\
 :=&  T_1(\frac 12+\delta_1,d)-
T_2(\frac 12+\delta_1,d) .
\end{align*}

We apply the Cauchy-Schwarz inequality to see that
$$
|T_2(\tfrac 12+\delta_1,d)|^2 \ll X^{-2\Re(\delta_1)}
\Big( \int\limits_{(-\Re(\delta_1))} |\Gamma(s)B(\tfrac 12+\delta_1+s,d)^2 ds|
\Big)\Big(\int\limits_{(-\Re(\delta_1))} |\Gamma(s) ds|\Big),
$$
It follows from this and the rapid decay of $|\Gamma(s)|$ when $|\Im(s)| \to
\infty$ that we have
$$
|T_2(\tfrac 12+\delta_1,d)|^2 \ll X^{-2\Re(\delta_1)}
\Big( 1+ \int\limits_{(-\Re(\delta_1))} |\Gamma(s)| |L(\tfrac 12+\delta_1+s,
\chi_{-8d})
M(\tfrac 12+\delta_1+s,d)|^2 |ds|\Big).
$$
Applying the Cauchy-Schwarz inequality again, we see that
$$
 {\mathcal S}(|T_2(\frac 12+\delta_1,d)|^2;\Phi) \ll X^{-2\Re(\delta_1)} \Big( 1+
 \int\limits_{(-\Re(\delta_1))} |\Gamma(s)| {\mathcal S}(|L(\tfrac 12+\delta_1+s,
\chi_{-8d})|^4;\Phi)^{\frac 12}
{\mathcal S}(|M(\tfrac 12+\delta_1+s,d)|^4;\Phi)^{\frac 12} |ds| \Big),
$$
 Applying Lemma \ref{lem:2.3} and \eqref{Mest} in the above estimation, we see that for $M\le \sqrt{X}$,
\begin{align}
\label{ST2}
{\mathcal S}(|T_2(\tfrac 12+\delta_1, d)|^2; \Phi) \ll
X^{-2\Re(\delta_1) +\epsilon}.
\end{align}
 With one more application of the Cauchy-Schwarz inequality, we deduce from the above that
\begin{align}
\label{ST2-1}
  {\mathcal S}(|T_2(\tfrac 12+\delta_1,d)|;\Phi)
\ll X^{-\Re(\delta_1) +\epsilon}.
\end{align}

  Now, we consider the contribution from $T_1$.  For $\Re(s) >1$,  we write
$$
B(s,d) = \sum_{\substack{n \equiv 1 \bmod (1+i)^3}} \frac{b(n)}{N(n)^s} \leg {(1+i)^5d}{n}.
$$
 By \eqref{lambdan} and \eqref{mollifier}, we see that
$b(n)=0$ for all $N(n) \le M^{1-b}$ and that $|b(n)|\ll N(n)^{\epsilon} d_{[i]}(n)\ll N(n)^{\epsilon}$
for all $n$,  where we denote $d_{[i]}(n)$ for the analogue on $\mathcal{O}_K$ of the usual divisor function $d$ on $\mz$. Moreover, as $\lam$ is supported
on square-free numbers, we have $b(m^2) = \sum_{\substack{d|m^2 \\ d \equiv 1 \bmod (1+i)^3}}
\lam(d) =\sum_{\substack{d|m \\ d \equiv 1 \bmod (1+i)^3}}\lam(d)=b(m)$. It follows that we have $b(n)=0$ for all primary
square values $N(n) \le M^{2(1-b)}$.   Now, we have
\begin{align}
\label{T1int}
\begin{split}
T_1(\tfrac 12+\delta_1,d) =& \frac{1}{2\pi i} \int\limits_{(c)} \Gamma(s)
\sum_{\substack{n \equiv 1 \bmod (1+i)^3 \\ M^{1-b} \le N(n) \le X\log^2 X}} \frac{b(n)}{N(n)^{\frac 12+\delta_1+s}}
\leg{(1+i)^5d}{n} X^s ds \\
&+ \sum_{\substack{n \equiv 1 \bmod (1+i)^3 \\ N(n)> X\log^2 X }} \frac{b(n)}{N(n)^{\frac 12+\delta_1}} \leg{(1+i)^5d}{n}
\Big(\frac{1}{2\pi i}\int\limits_{(c)} \Gamma(s) \leg{X}{N(n)}^s
ds \Big).
\end{split}
\end{align}
 Using the result that
$$\frac{1}{2\pi i} \int\limits_{(c)} \Gamma(s) \leg{X}{N(n)}^s ds
=\exp(-N(n)/(20X)),$$
 we see that the second term on the right side of \eqref{T1int} contributes $\ll X^{-5}$.  We then move the
line of integration in the first term on the right side of \eqref{T1int}to $\Re(s)=\frac 1{\log X}$  to see that
\begin{align}
\label{T1}
T_1(\tfrac 12+\delta_1,d) =
 \frac{1}{2\pi i} \int\limits_{(\frac{1}{\log X})}
\Gamma(s)
\sum_{\substack{n \equiv 1 \bmod (1+i)^3 \\ M^{1-b} \le N(n) \le X\log^2 X}} \frac{b(n)}{N(n)^{\frac 12+\delta_1+s}}
\leg{(1+i)^5d}{n} X^s ds +O(X^{-5}).
\end{align}
 It follows from this and the Cauchy-Schwarz inequality that we have
\begin{align*}
|T_1(\tfrac 12+\delta_1,d)|^2 &\ll X^{-10} +
\Big(\int\limits_{(\frac 1{\log X})} |\Gamma(s)|\Big| \sum_{\substack{n \equiv 1 \bmod (1+i)^3 \\ M^{1-b} \le N(n) \le X\log^2 X}} \frac{b(n)}{N(n)^{\frac 12+\delta_1+s}} \leg{(1+i)^5d}{n}\Big|^2 |ds|
\Big)\Big(\int\limits_{(\frac 1{\log X})} |\Gamma(s)ds| \Big)\\
&\ll X^{-10} + X^{\epsilon}
\int\limits_{(\frac 1{\log X})} |\Gamma(s)|\Big| \sum_{\substack{n \equiv 1 \bmod (1+i)^3 \\ M^{1-b} \le N(n) \le X\log^2 X}} \frac{b(n)}{N(n)^{\frac 12+\delta_1+s}} \leg{(1+i)^5d}{n}\Big|^2 |ds|.
\end{align*}

  We split the sum over $n$ above into dyadic blocks and apply Lemma
 \ref{lem: estimates for character sums} to see that
$$
{\mathcal S}(|T_1(\tfrac 12+\delta_1,d)|^2;\Phi)
\ll  M^{-2 \Re(\delta_1)(1-b)}X^{\epsilon}.
$$
 Combining this with \eqref{ST2}, we deduce that
\begin{align}
\label{B2}
{\mathcal S}(|B(\tfrac 12+\delta_1,d)|^2;\Phi) \ll M^{-2\Re(\delta_1)(1-b)}
X^{\epsilon}.
\end{align}

 Next, we bound
${\mathcal S}(T_1(\tfrac 12+\delta_1,d);\Phi)$ by applying \eqref{T1} to see that
\begin{align}
\label{ST1}
{\mathcal S}(T_1(\tfrac 12+\delta_1,d);\Phi) \ll
X^{-5} + X^{\epsilon} \sum_{\substack{n \equiv 1 \bmod (1+i)^3 \\ M^{1-b} \le N(n) \le X\log^2 X}}  \frac{|b(n)|}
{N(n)^{\frac 12+\Re(\delta_1)}}
\Big|{\mathcal S}\big( \chi_{(1+i)^5d}(n) ;\Phi\big)\Big|.
\end{align}
We now apply the Mellin transform to see that for any $c>1$,
\begin{align*}
{\mathcal S}\big( \chi_{(1+i)^5d}(n) ;\Phi\big) =& \frac{\leg {(1+i)^5}{n} }{2\pi i}
\int\limits_{(c)} \sum_{d \equiv 1 \bmod (1+i)^3} \frac{\mu^2_{[i]} ((1+i)d) \chi_n(d) }{N(d)^w}
X^{w-1} {\widehat \Phi}(w) dw \\
=&  \frac{\leg {(1+i)^5}{n}}{2\pi i}
\int\limits_{(c)} \frac{L(w,\chi_n)}{L(2w,\chi_n)} (1+ \frac {\chi_n(1+i)}{2^w})^{-1}
X^{w-1} {\widehat \Phi}(w)dw.
\end{align*}
  Moving the line of integration
above to $\Re(w) = \frac 12+ \frac{1}{\log X}$, we see that we
encounter a pole at $w=1$ if and only if $n=\square$ and the corresponding contribution of the
residue is $\ll 1$.  It follows that
$$
|{\mathcal S}\big( \chi_{(1+i)^5d}(n) ;\Phi\big)| \ll \delta(n=\square) + X^{-\frac 12 +
\epsilon}
\int\limits_{(\frac 12+\frac
{1}{\log X})} |L(w,\chi_n)| |{\widehat \Phi}(w)| |dw|,
$$
where $\delta(n=\square)$ is $1$ if $n=\square$ and $0$ otherwise.
As $b(n)=0$ for all perfect squares with norm $\le M^{2(1-b)}$, we
deduce that
\begin{align}
\label{4.8}
\begin{split}
& \sum_{\substack{n \equiv 1 \bmod (1+i)^3 \\ M^{1-b} \le N(n) \le X\log^2 X}} \frac{|b(n)|}
{N(n)^{\frac 12+\Re(\delta_1)}}
\Big|{\mathcal S}\big( \chi_{(1+i)^5d}(n) ;\Phi\big)\Big| \\
\ll & X^{\epsilon}  M^{-2\Re(\delta_1)(1-b)} +
X^{-\frac 12+\epsilon} \int\limits_{(\frac 12+\frac
{1}{\log X})} \sum_{\substack{n \equiv 1 \bmod (1+i)^3 \\ M^{1-b} \le N(n) \le X\log^2 X}}  \frac{1}
{N(n)^{\frac 12+\Re(\delta_1)}}  |L(w,\chi_n)| |{\widehat \Phi}(w)||dw|.
\end{split}
\end{align}
 Note that Lemma \ref{lem:2.3} implies that
$$
\sum_{\substack{n \equiv 1 \bmod (1+i)^3 \\ N\le N(n)\le 2N}} |L(w,\chi_n)| \ll N^{1+\epsilon} (1+|w|^2)^{\frac 14
+\epsilon}.
$$
Applying this together with \eqref{gsest} by setting $g=\Phi$ and $\nu=2$ there,
we obtain from \eqref{4.8} that
\begin{align*}
\begin{split}
& \sum_{\substack{n \equiv 1 \bmod (1+i)^3 \\ M^{1-b} \le N(n) \le X\log^2 X}} \frac{|b(n)|}
{N(n)^{\frac 12+\Re(\delta_1)}}
\Big|{\mathcal S}\big( \chi_{(1+i)^5d}(n) ;\Phi\big)\Big|
\ll
X^{\epsilon} \Phi_{(2)}
\Big( M^{-2\Re(\delta_1)(1-b)} + X^{-\Re(\delta_1)}
+ M^{(\frac 12-\Re(\delta_1))(1-b)}X^{-\frac 12}\Big).
\end{split}
\end{align*}

 We insert the above in \eqref{ST1} and further combine the result with \eqref{ST2-1} to
deduce that for $M \le \sqrt{X}$,
\begin{align}
\label{B1}
{\mathcal S}(B(\frac 12+\delta_1,d);\Phi)
\ll X^{\epsilon} \Phi_{(2)}
\Big( M^{-2\Re(\delta_1)(1-b)}
+ M^{(\frac 12-\Re(\delta_1))(1-b)}X^{-\frac 12}\Big).
\end{align}
 The first statement of Proposition \ref{Proposition 2.1} now follows by applying \eqref{B2} and \eqref{B1} in \eqref{W-1}.

To prove the second assertion, we note that
$$
f(s,d) = 1+B(s,d) = 1+ O\Big( \sum_{\substack{ n \equiv 1 \bmod (1+i)^3 \\ N(n)\ge M^{1-b}}} \frac{d_{[i]}(n)}{N(n)^{\Re(s)}}
\Big).
$$
We deduce readily from this that  $f(s,d)$ has no zeros to the
right of $\sigma_0$. Moreover, we have $\log f(s,d) = B(s,d) + O(|B(s,d)|^2)$
for $\Re(s) > \sigma_0$.  Thus, we obtain that
$$
{\mathcal S}(I(d);\Phi) \ll \exp\Big(\pi \frac{(\frac 12+\epsilon)\log X}{2S}
\Big) \Big(  |{\mathcal S}(B(s,d);\Phi)| +{\mathcal S}(|B(s,d)|^2;\Phi)\Big).
$$
  By applying \eqref{B2} and \eqref{B1}, we see that the second statement of Proposition \ref{Proposition 2.1} now follows.

\subsection{Proof of Proposition \ref{Proposition 2.2}}
\label{sec: SR}

  In this section, we estimate ${\mathcal S}_R(A_{\delta,\tau}(d)M(\tfrac 12+\delta_1,d)M(\tfrac 12
+\delta_2,d);\Psi)$ by proving Proposition \ref{Proposition 2.2}. We first notice that $|R_Y(d)| \le \sum_{k|d} 1
\ll N(d)^{\epsilon}$ and that $R_Y(d)=0$ unless $d=l^2m$ where $m$
is square-free and $N(l)>Y$. It follows that
\begin{align}
\label{SRbound}
\begin{split}
& {\mathcal S}_R(A_{\delta,\tau}(d) M(\tfrac 12+\delta_1,d)M(\tfrac 12+\delta_2,d)
;\Psi)\\
\ll & X^{-1+\epsilon} \sum_{\substack{ N(l)>Y \\ (l,2)=1}} \sumflat_{X/N(l)^2 \le N(m) \le 2 X/N(l)^2}
|A_{\delta,\tau}(l^2m) M(\tfrac 12+\delta_1,l^2m)M(\tfrac 12+\delta_2,l^2m)|,
\end{split}
\end{align}
where $\sum^{\flat}$ means that the sum is over odd and square-free $m \in \mathcal O_K$.  Now, applying twice the Cauchy-Schwarz inequality, we see that the sum over $m$ above is
\begin{align}
\label{msumbound}
\ll \biggl(\sumflat_{ m}|M(\tfrac 12+\delta_1,l^2m)|^{4}
\biggr)^{\frac14}
 \biggl(\sumflat_{m }|M(\tfrac 12+\delta_2,l^2m)|^{4}
\biggr)^{\frac14}
\biggl(\sumflat_{m} |A_{\delta,\tau}(l^2m)|^2
\biggr)^{\frac 12}.
\end{align}

Note that for any $c>\frac 12 +|\Re(\delta)|$, we have
\begin{align}
\label{Aintrep}
A_{\delta,\tau}(l^2 m)
=& \frac {1}{2\pi i} \int\limits_{(c)}
\Gamma_\delta(s)
\leg{2^5N(l^2 m)}{\pi^2}^{s} \frac{2s}{s^2 -\tau^2}
\sum_{n \equiv 1 \bmod (1+i)^3} \frac{r_\delta(n)}{N(n)^{s+\frac 12}}\leg {(1+i)^5l^2 m}{n}
ds.
\end{align}

We write the sum above as
\begin{align}
\label{Ldecomp}
\sum_{n \equiv 1 \bmod (1+i)^3}  \frac{r_\delta(n)}{N(n)^{s+\frac 12}}\leg {(1+i)^5l^2 m}{n} =
L(\tfrac 12+s+\delta,\chi_{(1+i)^5m})L(\tfrac 12 + s-\delta,\chi_{(1+i)^5m})
 {\mathcal E}(s,l),
\end{align}
with
$$
{\mathcal E}(s,l)= \prod_{\substack{\varpi \in G \\ \varpi|l}} \biggl(1-\frac{1}{N(\varpi)^{s+\frac 12+\delta}}
\leg{(1+i)^5m}{\varpi}\biggr)\biggl(1-\frac{1}{N(\varpi)^{s+\frac 12-\delta}}\leg {(1+i)^5 m}{\varpi}\biggr).
$$
Observe that the left side of \eqref{Ldecomp} is analytic for all $s$ since $\chi_{(1+i)^5m}$ is non-principal. Thus, we
we may move the line of integration in \eqref{Aintrep} to $\Re(s)=\kappa+ 1/\log X$ without encountering any pole.  Using the estimations that
$|{\mathcal E}(s,l)|\le \prod_{\substack{\varpi \in G \\ \varpi|l}} (1+1/\sqrt{N(\varpi)})^2 \ll N(l)^{\epsilon}
\ll X^{\epsilon}$,  $2s/(s^2 -\tau^2) \ll X^\epsilon$, and the rapid decay of
$|\Gamma_\delta(s)|$ when $|\Im(s)| \rightarrow \infty$, we apply the Cauchy-Schwarz inequality to see that
$$
|A_{\delta,\tau}(l^2m)|^2 \ll X^{2\kappa+\epsilon}
\int\limits_{(\kappa+\frac 1{\log X})}
|\Gamma_\delta(s)| |L(\tfrac 12+s+\delta, \chi_{(1+i)^5m})L(\tfrac 12+s-\delta,
\chi_{(1+i)^5m})|^{2}
|ds|.
$$
It follows from this and Lemma \ref{lem:2.3} that we have
$$
\sumflat_{X/N(l)^2 \le N(m)\le 2X/N(l)^2}
|A_{\delta,\tau}(l^2 m)|^2
\ll \frac{X^{1+2\kappa+\epsilon}}{N(l)^2}
\int\limits_{(\kappa+\frac 1{\log X})} |\Gamma_\delta(s)|
(1+|s|^2)^{1+\epsilon} |ds| \ll \frac{X^{1+2\kappa+\epsilon}}{N(l)^2}.
$$
 We apply this with \eqref{Mest} to see that the expression in \eqref{msumbound} is bounded by
$$
\ll \frac {X^{1/2+\kappa +\epsilon}}{N(l)} \Big(\frac{X^{1/4}}{N(l)^{1/2}} +\frac{X^{1/4}}{N(l)^{1/2}}M^{-\Re(\delta_1)}+M^{1/2-\Re(\delta_1)} \Big )\Big(\frac{X^{1/4}}{N(l)^{1/2}} +\frac{X^{1/4}}{N(l)^{1/2}}M^{-\Re(\delta_2)}+M^{1/2-\Re(\delta_2)} \Big ).
$$
Inserting the above in \eqref{SRbound} and keeping in mind that $M \leq \sqrt{X}$, we see that the assertion of Proposition \ref{Proposition 2.2} follows.

\section{Proof of Proposition \ref{Proposition 2.3}}

\subsection{A first decomposition}

  Note that we have
\begin{align}
\label{SMseries}
{\mathcal S}_M\biggl(\leg{(1+i)^5d}{l} A_{\delta,\tau}(d) ;\Psi\biggr)
=  \sum_{n \equiv 1 \bmod (1+i)^3} \frac{r_{\delta}(n)}
{\sqrt{N(n)}} {\mathcal S}_M \biggl(\leg{(1+i)^5d}{ln};F_{N(n)}\biggr),
\end{align}
 where $F_{N(n)}(t)$ is defined as in \eqref{Fdef}.

  By definition, we have
\begin{align*}
   {\mathcal S}_M\biggl(\leg{(1+i)^5d}{ln};F_{N(n)} \biggr) =& \frac 1X\sum_{(d,2)=1}\Big ( \sum_{\substack{\alpha \equiv 1 \bmod (1+i)^3 \\ N(\alpha) \le Y\\ \alpha^2 |d }} \mu_{[i]}(\alpha) \Big )\leg{(1+i)^5d}{ln}F_{N(n)}\leg {N(d)}{X} \\
  =& \frac 1X \sum_{\substack{\alpha \equiv 1 \bmod (1+i)^3 \\ N(\alpha) \le Y}} \mu_{[i]}(\alpha) \sum_{(d,2)=1}\leg{(1+i)^5 \alpha^2 d}{ln}F_{N(n)}\leg {N(\alpha^2 d)}{X}.
\end{align*}
 We apply the Poisson summation formula given in Lemma \ref{Poissonsumformodd} to treat the last sum above to obtain that
\begin{align}
\label{SMPS}
{\mathcal S}_M\biggl(\leg{(1+i)^5d}{ln};F_{N(n)}\biggr) = \frac{1}{2N(ln)} \leg{(1+i)^6}{ln}
 \sum_{\substack{\alpha \equiv 1 \bmod (1+i)^3 \\ N(\alpha) \le Y\\ (\alpha,ln)=1 }} \frac{\mu_{[i]}(\alpha)}{N(\alpha)^2}
\sum_{k \in \mathcal O_K} (-1)^{N(k)} g(k,ln) {\widetilde F}_{N(n)}\Big ( \sqrt{ \frac{N(k)X}{2N(\alpha^2ln)}} \Big ).
\end{align}

The above allows us to deduce from \eqref{SMseries} that
\begin{align}
\label{SMdecomp}
{\mathcal S}_M\biggl(\leg{(1+i)^5d}{l}A_{\delta,\tau}(d)
;\Psi\biggr)
=  {\mathcal P}(l) + {\mathcal R}_{0}(l),
\end{align}
where ${\mathcal P}(l)$ arises from the $k=0$ term in \eqref{SMPS} and ${\mathcal R}_{0}(l)$
includes the remaining non-zero terms $k$ in
\eqref{SMPS}. Hence
\begin{align}
\label{PR}
\begin{split}
{\mathcal P}(l) =& \frac{1}{2N(l)}\sum_{n \equiv 1 \bmod (1+i)^3}
\frac{r_{\delta}(n)}{N(n)^{\frac 32}} \leg{(1+i)^6}{ln}
\sum_{\substack{\alpha \equiv 1 \bmod (1+i)^3 \\  N(\alpha) \le Y\\ (\alpha,ln)
=1 }} \frac{\mu_{[i]}(\alpha)}{N(\alpha)^2}  g(0, ln){\widetilde F_{N(n)}}(0), \\
{\mathcal R}_{0}(l)
=& \frac{1}{2N(l)}\sum_{n \equiv 1 \bmod (1+i)^3} \frac{r_{\delta}(n)}
{ N(n)^{\frac 3 2}}
\leg {(1+i)^6}{l n} \sum_{\substack{\alpha \equiv 1 \bmod (1+i)^3 \\  N(\alpha) \le Y\\ (\alpha,  l n) =1}}
\frac{\mu_{[i]}(\alpha)}{N(\alpha)^2}
\sum_{\substack{ k \in \mathcal O_K \\ k\neq 0 }}
 (-1)^{N(k)} g(k,ln) {\widetilde F}_{N(n)} \Big ( \sqrt{ \frac{N(k)X}{2N(\alpha^2ln)}} \Big ).
\end{split}
\end{align}

   We show in what follows that ${\mathcal P}(l)$ contributes to a main term and ${\mathcal R}_{0}(l)$ also contributes to secondary main terms.

\subsection{The principal term ${\mathcal P}(l)$}

 Note that $g(0, ln)=\varphi_{[i]}(ln)$ if $ln=\square$ and $g(0,ln)=0$ otherwise.
Also, we have
$$
\sum_{\substack{\alpha \equiv 1 \bmod (1+i)^3 \\  N(\alpha) \le Y\\ (\alpha,ln)
=1 }} \frac{\mu_{[i]}(\alpha)}{N(\alpha)^2}
= \frac{1}{\zeta_K(2)} \prod_{\substack{\varpi \in G \\ \varpi |2ln}} \Big( 1-\frac 1{N(\varpi)^2}\Big)^{-1}
\Big( 1+ O\Big(\frac 1Y\Big)\Big).
$$
 Using the above observations, we see that
$$
{\mathcal P}(l)
= \frac{1+O(Y^{-1})}{\zeta_K(2)}
\sum_{\substack{ n \equiv 1 \bmod (1+i)^3 \\ ln=\square }}
\frac{r_{\delta}(n)}{N(n)^{\frac12}} \leg{(1+i)^6}{ln}
\prod_{\substack{\varpi \in G \\ \varpi |2ln}} \leg{N(\varpi)}{N(\varpi)+1} {\widetilde F}_{N(n)}(0).
$$

As $l = l_1 l_2^2$ with $l_1$ primary and square-free, we see that $l n =\square$ is
equivalent to $ n= l_1 m^2$ for some primary $m$.
Thus
\begin{align*}
{\mathcal P}(l)
&= \frac{1+O(Y^{-1})}{\zeta_K(2)\sqrt{N(l_1)}}
\sum_{\substack{ m \equiv 1 \bmod (1+i)^3 }}
\frac{r_{\delta}(l_1m^2)}{N(m)}
\prod_{\substack{\varpi \in G \\ \varpi|2lm}} \leg{N(\varpi)}{N(\varpi)+1} {\widetilde  F}_{N(l_1m^2)}(0).
\end{align*}

 Note that we have for any $c>|\Re(\tau)|$,
\begin{align*}
\begin{split}
  \widetilde{F}_{N(l_1m^2)} \left(0\right) =& \int\limits^{\infty}_{-\infty}\int\limits^{\infty}_{-\infty}\Psi \left(N(x+yi) \right)
W_{\delta,\tau} \left(  \frac {\pi^2N(l_1m^2)}{2^5 X N(x+yi)} \right ) \dif x \dif y \\
=&  \frac {1}{2\pi
   i} \int\limits\limits_{(c)}\Gamma_\delta(s) \left(\frac{2^{5/2}}{\pi}\right)^{2s}
\Big (\frac {X}{N(l_1m^2)}\Big )^{s}
 \left (\int\limits^{\infty}_{-\infty}\int\limits^{\infty}_{-\infty}\Psi \left(N(x+yi) \right) N(x+yi)^{s}
\dif x \dif y \right )  \frac{2s}{s^2 -\tau^2}ds \\
=& \frac {\pi }{2\pi
   i} \int\limits\limits\limits_{(c)} \Gamma_\delta(s)\left(\frac{2^{5/2}}{\pi}\right)^{2s}
\Big (\frac {X}{N(l_1m^2)}\Big )^{s}
 \widehat{\Psi}(1+s)\frac{2s}{s^2 -\tau^2}ds,
\end{split}
\end{align*}
  where we deduce the last equality above by noticing that
\begin{align*}
  \int\limits^{\infty}_{-\infty}\int\limits^{\infty}_{-\infty}\Psi \left(N(x+yi) \right) N(x+yi)^{s}
\dif x \dif y =\int^{2\pi}_0\int^{\infty}_0\Psi (r^2)r^{2s}rdrd\theta =\pi \widehat{\Psi}(1+s).
\end{align*}

 We then conclude that for any $c>\kappa$,
\begin{align}
\label{Plfirstexp}
{\mathcal P}(l) =\frac {2\pi}3 \cdot \frac{1+O(Y^{-1})}{\zeta_K(2)\sqrt{N(l_1)}} I(l),
\end{align}
where
\begin{align}
\label{Il}
I(l)=\frac{1}{2\pi i} \int\limits_{(c)} \Gamma_{\delta}(s)
\left(\frac{2^{5/2}}{\pi}\right)^{2s}
\Big (\frac {X}{N(l_1)}\Big )^{s}\widehat{\Psi}(1+s)\frac{2s}{s^2 -\tau^2}
\sum_{\substack{ m \equiv 1 \bmod (1+i)^3 }}
\frac{r_{\delta}(l_1m^2)}{N(m)^{1+2s}}
\prod_{\substack{\varpi \in G \\ \varpi|lm}} \leg{N(\varpi)}{N(\varpi)+1}ds.
\end{align}

  Now, by comparing Euler factors, we see that for $\Re(s)>1 + 2|\Re(\delta)|$,
$$
\sum_{\substack{ m \equiv 1 \bmod (1+i)^3 }}
\frac{r_{\delta}(l_1m^2)}{N(m)^{s}}
\prod_{\substack{\varpi \in G \\ \varpi|lm}} \leg{N(\varpi)}{N(\varpi)+1}
= r_{\delta}(l_1) Z(s;\delta)
\eta_{\delta}(s;l),
$$
where $Z$ and $\eta$ are as defined in \eqref{Zswdef} and \eqref{etadef}. Using this in \eqref{Il}, we see that
\begin{align}
\label{I}
I(l)&= \frac{r_\delta(l_1)}{2\pi i}
\int\limits_{(c)} \Gamma_{\delta}(s)
\left(\frac{2^{5/2}}{\pi}\right)^{2s}
\Big (\frac {X}{N(l_1)}\Big )^{s}\widehat{\Psi}(1+s)\frac{2s}{s^2 -\tau^2}
Z(1+2s;\delta)\eta_{\delta}(1+2s;l)
ds.
\end{align}

Taking $c=\kappa+\epsilon$ above implies that
$I(l) \ll |r_\delta(l_1)| (X/N(l_1))^{\kappa+\epsilon}$.
We now move the line of integration in \eqref{I} to
$\Re s=-\frac 14 +\epsilon$ to encounter simple poles at $s=\pm \tau$, $\pm
\delta$ in the process. We note the following convexity bounds for $\zeta_K(s)$ from \cite[Exercise 3, p. 100]{iwakow}:
\begin{align}
\label{zetaconvexitybounds}
\begin{split}
 \zeta_K(s) \ll (1+|s|^2)^{\frac {1-\Re(s)}{2}+\varepsilon}, \quad 0< \Re(s)<1.
\end{split}
\end{align}
 It follows that $|Z(1+2s;\delta)| \ll (1+|s|^2)^3$ on the new line. Moreover, we have
$$|\eta_{\delta}(1+2s;l)|
\ll \prod_{\substack{\varpi \in G \\ \varpi|l_1}}(1+O(\frac{1}{\sqrt{N(\varpi)}})) \prod_{\substack{\varpi \in G \\ \varpi \nmid l_1}}
(1+O(\frac 1{N(\varpi)^{1+\epsilon}})) \ll N(l_1)^{\epsilon}.$$
Thus, we deduce that the integral on the new line is
$$
\ll \frac{|r_\delta(l_1)|N(l_1)^{\frac 14+\epsilon}}{X^{\frac 14 -\epsilon}}
\int\limits_{(-\frac 14 +\epsilon)}
|s|^5
|{\widehat \Psi}(1+s)| \Gamma_{\delta}(s)| |ds|
\ll  \frac{|r_\delta(l_1)|
N(l_1)^{\frac 14+\epsilon}}{X^{\frac 14-\epsilon}}.
$$
It follows  that
\begin{align*}
I(l)&=  r_\delta(l_1)
\mathop{\text{Res}}_{s=\pm \delta, \pm \tau}
\biggl\{ \Gamma_{\delta}(s)
\left(\frac{2^{5/2}}{\pi}\right)^{2s}
\Big (\frac {X}{N(l_1)}\Big )^{s}\widehat{\Psi}(1+s)\frac{2s}{s^2 -\tau^2}
Z(1+2s;\delta)\eta_{\delta}(1+2s;l)
\biggr\}+
O\biggl( \frac{|r_\delta(l_1)|N(l_1)^{\frac 14+\epsilon}}{X^{\frac 14
-\epsilon}} \biggr).
\end{align*}

  Applying this in \eqref{Plfirstexp}, we obtain that
\begin{align}
\label{Pl}
\begin{split}
{\mathcal P}(l)=&  \frac{2 \pi r_\delta(l_1) }{3\zeta_K(2)\sqrt{N(l_1)}}
\mathop{\text{Res}}_{\substack{ s=\pm \delta\\ s= \pm \tau}}
\biggl\{ \Gamma_{\delta}(s)
\left(\frac{2^{5/2}}{\pi}\right)^{2s}
\Big (\frac {X}{N(l_1)}\Big )^{s}\widehat{\Psi}(1+s)\frac{2s}{s^2 -\tau^2}
Z(1+2s;\delta)\eta_{\delta}(1+2s;l)
\biggr\}\\
&+
O\biggl(\frac{|r_\delta(l_1)|X^{\kappa+\epsilon}}{Y N(l_1)^{\frac 12+\kappa}}
+ \frac{|r_\delta(l_1)|X^{\epsilon}}{(XN(l_1))^{\frac 14}}
\biggr).
\end{split}
\end{align}

\subsection{The secondary main terms}

 We apply the Mellin transform to recast ${\mathcal R}_0(l)$ given in \eqref{PR} as
\begin{align}
\label{R0}
{\mathcal R}_{0}(l)
=& \frac{1}{2N(l)}\sum_{\substack{\alpha \equiv 1 \bmod (1+i)^3 \\  N(\alpha) \le Y\\ (\alpha,  l) =1}}
\frac{\mu_{[i]}(\alpha)}{N(\alpha)^2}\sum_{\substack{ k \in \mathcal O_K \\ k\neq 0 }}
\frac{(-1)^{N(k)}}{2\pi i}
\int\limits_{(c)}\sum_{\substack{n \equiv 1 \bmod (1+i)^3 \\ (n, \alpha)=1}} \frac{r_{\delta}(n)}
{ N(n)^{ \frac 32+w}} g(k,ln) h\Big ( \frac{N(k)X}{2N(\alpha^2l)}, w \Big )dw,
\end{align}
 where $c> |\Re(\delta)|$ and $h$ is defined as in \eqref{h}.

  We apply Lemma \ref{lem: nu-sum as an Euler product} in \eqref{R0} and move the line of integration to $\Re (w) = -\frac 12 + |\Re(\delta)| +\epsilon$. In this process,  we
encounter poles only when $k_1 =\pm i$, with simple poles at $w= \pm \delta$. The residues of these poles contribute secondary main terms.  We therefore write
${\mathcal R}_0(l) = {\mathcal R}(l) + {\mathcal P}_{+}(l)+{\mathcal P}_{-}(l)$ where
\begin{align}
\label{R}
\begin{split}
{\mathcal R}(l) =& \frac{1}{2N(l)}\sum_{\substack{\alpha \equiv 1 \bmod (1+i)^3 \\  N(\alpha) \le Y\\ (\alpha,  l) =1}}
\frac{\mu_{[i]}(\alpha)}{N(\alpha)^2}\sum_{\substack{ k \in \mathcal O_K \\ k\neq 0 }}
\frac{(-1)^{N(k)}}{2\pi i} \int\limits_{(-\frac 12 + |\Re(\delta)| +\epsilon)}
L(1+w+\delta,\chi_{i k_1}) L(1+w-\delta,\chi_{i k_1}) \\
& \times
{\mathcal G}_\delta(1+w;k,l,\alpha) h\Big ( \frac{N(k)X}{2N(\alpha^2l)}, w \Big ) dw,
\end{split}
\end{align}
and (after replacing $k$ by $\pm ik^2$)
\begin{align}
\begin{split}
\label{P}
{\mathcal P}_{+ }(l) =& \frac {\pi}{4}\cdot \frac{1}{2N(l)}\sum_{\substack{\alpha \equiv 1 \bmod (1+i)^3 \\  N(\alpha) \le Y\\ (\alpha,  l) =1}}
\frac{\mu_{[i]}(\alpha)}{N(\alpha)^2}
\sum_{\mu=\pm}  \zeta_K(1+2\mu \delta)
\sum_{k\in G} (-1)^{N(k)} {\mathcal G}_{\delta}(1+\mu\delta; i k^2,l,\alpha)
h\Big ( \frac{N(k)^2X}{2N(\alpha^2l)}, \mu \delta\Big ), \\
{\mathcal P}_{-}(l) =& \frac {\pi}{4} \cdot \frac{1}{2N(l)}\sum_{\substack{\alpha \equiv 1 \bmod (1+i)^3 \\  N(\alpha) \le Y\\ (\alpha,  l) =1}}
\frac{\mu_{[i]}(\alpha)}{N(\alpha)^2}
\sum_{\mu=\pm}  \zeta_K(1+2\mu \delta)
\sum_{k\in G} (-1)^{N(k)} {\mathcal G}_{\delta}(1+\mu\delta; -i k^2,l,\alpha)
h\Big ( \frac{N(k)^2X}{2N(\alpha^2l)}, \mu \delta\Big ).
\end{split}
\end{align}

  Note that it follows from Lemma \ref{Gausssum} and Lemma \ref{lem: nu-sum as an Euler product} that we have ${\mathcal G}_{\delta}(1+\mu\delta; i k^2,l,\alpha)={\mathcal G}_{\delta}(1+\mu\delta; -i k^2,l,\alpha)$. Note also that we have
\begin{align}
\label{Gsum}
\begin{split}
& \sum_{\substack{j \in G }}(-1)^{N(j)} N(j)^{-2u}  \mathcal{G}_{v}(s;  i j^2,\ell,\alpha)  \\
 = & -\sum_{\substack{j \in G \\ (j, 1+i) =1 }}N(j)^{-2u}  \mathcal{G}_{v}(s;  i j^2,\ell,\alpha )+\sum_{\substack{j \in G \\ 1+i|j }}N(j)^{-2u}  \mathcal{G}_{v}(s;  i j^2,\ell,\alpha ) \\
=& -\sum_{\substack{j \in G  }}N(j)^{-2u}  \mathcal{G}_{v}(s; i j^2,\ell,\alpha )+2\sum_{\substack{j \in G \\ 1+i|j }}N(j)^{-2u}  \mathcal{G}_{v}(s;  i j^2,\ell,\alpha ) \\
=& -\sum_{\substack{j \in G  }}N(j)^{-2u}  \mathcal{G}_{v}(s;  i j^2,\ell,\alpha )+2^{1-2u}\sum_{\substack{j \in G }}N(j)^{-2u}  \mathcal{G}_{v}(s;  i (1+i)^2j^2,\ell,\alpha ) \\
=& -(1-2^{1-2u})\sum_{\substack{j \in G  }}N(j)^{-2u}  \mathcal{G}_{v}(s; i j^2,\ell,\alpha ),
\end{split}
\end{align}
  where the last equality above follows from the observation that we have $\mathcal{G}_{v}(s; i (1+i)^2j^2,\ell,\alpha )=\mathcal{G}_{v}(s;  i j^2,\ell,\alpha )$ by Lemma \ref{Gausssum} and Lemma \ref{lem: nu-sum as an Euler product}.

 We now define for $\mu=\pm$ and any complex number $u$ with $\Re(u) >\frac 12$,
\begin{align*}
{\mathcal H}(u, v;l,\alpha) =& N(l)^u \sum_{k \in G}
\frac{(-1)^{N(k)}}{N(k)^{2u}} {\mathcal G}_{v}(1+v; i k^2,l,\alpha).
\end{align*}
 It follows from Lemma \ref{lem: nu-sum as an Euler product} that the above series converges absolutely when $\Re(u)>\frac 12$. We further apply \eqref{Gsum} to recast ${\mathcal H}(u, v;l,\alpha)$ as
\begin{align*}
{\mathcal H}(u, v;l,\alpha)
= -N(l)^u (1-2^{1-2u}) \prod_{\varpi \in G} \sum_{b=0}^{\infty} \frac{{\mathcal G}_{v;\varpi}
(1+v;i \varpi^{2b},l,\alpha)}{N(\varpi)^{2bu}}.
\end{align*}
  We now apply Lemma \ref{Gausssum} to evaluate ${\mathcal G}_{v; \varpi}$ defined in Lemma \ref{lem: nu-sum as an Euler product} to see that
\begin{align}
\label{Hexp}
{\mathcal H}(u,v;l,\alpha) = - N(l) (1-2^{1-2u}) N(l_1)^{u-\frac 12}
\zeta_K(2u) \zeta_K(2u+1+4v) {\mathcal H}_{1}(u, v;l,\alpha),
\end{align}
where ${\mathcal H}_1 =\prod_{\varpi \in G} {\mathcal H}_{1;\varpi}$ with
$$
{\mathcal H}_{1;\varpi} =
\begin{cases}
\Big(1-\frac 1{N(\varpi)} \Big) \Big(1-\frac{1}{N(\varpi)^{1+2v}}\Big)\Big(1-\frac{1}
{N(\varpi)^{2u+1+4v}}\Big) &\text{if } \varpi|2\alpha, \\
\Big(1-\frac{1}{N(\varpi)}\Big) \Big(1-\frac{1}{N(\varpi)^{1+2v}}\Big)
\Big(1+\frac 1{N(\varpi)} +\frac 1{N(\varpi)^{1+2v}} -\frac{1}{N(\varpi)^{2u+2+4v}}
\Big) &\text{if } \varpi \nmid 2\alpha l, \\
\Big(1-\frac{1}{N(\varpi)}\Big) \Big(1-\frac{1}{N(\varpi)^{1+2v}}\Big)
\Big(1+\frac{1}{N(\varpi)^{2u+2v}}\Big) &\text{if } \varpi|l_1, \\
\Big(1-\frac{1}{N(\varpi)}\Big) \Big(1-\frac{1}{N(\varpi)^{1+2v}}\Big)
\Big(1+\frac{1}{N(\varpi)^{1+2v}}\Big) &\text{if } \varpi |l, \varpi \nmid l_1.
\end{cases}
$$
 It then follows from this and \eqref{Hexp} that ${\mathcal H}(u, v;l,\alpha)$
is analytic in the domain $\Re (u) > -\frac 12 + |\Re(v)|$.

We now apply Lemma \ref{lem: properties of h(xi,w)} and the observation that ${\mathcal G}_{v}(1+v; i k^2,l,\alpha)$ is an even function of $v$ to  deduce that, for $c> \max(\frac 12 +|\Re(\delta)|, |\Re(\tau)|)= \frac 12+ |
\Re(\delta)|$,
\begin{align}
\begin{split}
\label{Hint}
& \sum_{k \in G} (-1)^{N(k)} {\mathcal G}_{\delta}(1+\mu\delta; \pm i k^2,l,\alpha)
h\Big ( \frac{N(k)^2X}{2N(\alpha^2l)}, \mu \delta\Big ) \\
=& \frac{\pi}{2\pi i} \int\limits_{(c)} \Gamma_{\delta}(s)
\leg{2^6N(\alpha)^2}{\pi^2}^s
{\widehat \Psi}(1+\mu\delta) \leg{X}{2N(\alpha)^2}^{\mu\delta}
\frac{\Gamma (s-\mu \delta)}{\Gamma (1-s+\mu \delta)} \pi^{-2s+2\mu\delta}\frac{2s}{s^2-\tau^2}
{\mathcal H}(s-\mu\delta, \mu\delta; l,\alpha) ds.
\end{split}
\end{align}

  The analytical properties of $H$ discussed above allow us to move the line of integration in \eqref{Hint} to $\Re (s) = \kappa+ \frac{1}{\log X}$ without encountering any pole. We then deduce from this and
\eqref{P} that
\begin{align}
\label{Pexpression}
\begin{split}
{\mathcal P}_{\pm }(l)  =& \frac {\pi}{4} \cdot \frac{\pi }{2\pi i} \int\limits_{(\kappa+\frac{1}{\log X})}
\Gamma_\delta(s) \leg{2^6}{\pi^2}^s \frac{2s}{s^2-\tau^2}
 \sum_{\mu=\pm} {\widehat \Psi}(1+\mu\delta)
\leg{X}{2}^{\mu\delta} \zeta_K(1+2\mu\delta)
 \frac{\Gamma (s-\mu \delta)}{\Gamma (1-s+\mu \delta)} \pi^{-2s+2\mu\delta} \\
&\times
 \frac{1}{2N(l)}\sum_{\substack{\alpha \equiv 1 \bmod (1+i)^3 \\  N(\alpha) \le Y\\ (\alpha,  l) =1}}
\frac{\mu_{[i]}(\alpha)}{N(\alpha)^{2-2s+2\mu\delta}}
{\mathcal H}(s-\mu\delta, \mu \delta;l,\alpha) ds.
\end{split}
\end{align}

  We now extend the sum over $\alpha$ in \eqref{Pexpression} to infinity. To estimate the error introduced, we let ${\mathcal C}$ be the circle centred at $0$, with radius $\kappa+ 1/(2 \log X)$, and oriented
counter-clockwise.
Notice that, for any complex number $s$ with $\Re(s) =\kappa +\frac{1}{\log X}$, the
 function $2z {\widehat \Psi}(1+z) \leg{X}{2N(\alpha)^2}^z \zeta_K(1+2z) \frac{\Gamma (s-\mu \delta)}{\Gamma (1-s+\mu \delta)} (\pi)^{-2s+2\mu\delta} {\mathcal H}(s-z, z;l,\alpha)$ is analytic for $z$ inside
 ${\mathcal C}$. Thus, we deduce from Cauchy's theorem that
\begin{align}
\label{summubound}
\begin{split}
&\sum_{\mu=\pm} {\widehat \Psi}(1+\mu\delta)
\leg{X}{2N(\alpha)^2}^{\mu\delta} \zeta_K(1+2\mu\delta)
 \frac{\Gamma (s-\mu \delta)}{\Gamma (1-s+\mu \delta)} \pi^{-2s+2\mu\delta}
{\mathcal H}(s-\mu\delta, \mu \delta;l,\alpha)\\
=& \frac{1}{2\pi i} \int_{{\mathcal C}}
{\widehat \Psi}(1+z) \leg{X}{2N(\alpha)^2}^z \zeta_K(1+2z) \frac{\Gamma (s-z)}{\Gamma (1-s+z)} \pi^{-2s+2z} {\mathcal H}(s-z, z;l,\alpha) \frac{2z}{z^2 -\delta^2} dz.
\end{split}
\end{align}

 We observe that $2\kappa + 3/(2\log X)
 \ge \Re(s-z) \ge 1/(2\log X)$ for $z$ on ${\mathcal C}$ and that
$|\frac{2z}{z^2-\delta^2}{\widehat \Psi}(1+z) \zeta_K(1+2z)
\leg{X}{2N(\alpha)^2}^z|
\ll (\log^2 X) (XN(\alpha)^2)^{\kappa+1/\log X}$. Also, by Stirling's
formula (see \cite[(5.112)]{iwakow}), we have
$$\Big|\frac{\Gamma (s-z)}{\Gamma (1-s+z)} \pi^{-2s+2z} \Big | \ll (1+|\Im(s)|)^{4\kappa + 3/\log X-1} \ll 1.$$
Moreover, applying the convexity bounds for $\zeta_K(s)$ given in \eqref{zetaconvexitybounds}, we deduce from \eqref{Hexp} that $|{\mathcal H}(s-z, z;l,\alpha) |
\ll N(l)^{1+\epsilon} N(l_1)^{2\kappa + 3/(2\log X)-\frac 12} N(\alpha)^{\epsilon} (1+|s|) \log X$.
These estimates allow us to bound the quantities in \eqref{summubound} by
$$
 \ll N(l)^{1+\epsilon} N(l_1)^{2\kappa-\frac 12}
 (XN(\alpha)^2)^{\kappa+\epsilon} (1+|s|).
$$
It follows that
\begin{align*}
& \frac{1}{2N(l)} \sum_{\substack{ \alpha \equiv 1 \bmod (1+i)^3 \\ N(\alpha)>Y \\ (\alpha,l)=1}} \frac{\mu_{[i]}(\alpha)}
{N(\alpha)^{2-2s}}
\sum_{\mu=\pm} {\widehat \Psi}(1+\mu\delta)
\leg{X}{2N(\alpha)^2}^{\mu\delta} \zeta_K(1+2\mu\delta)
 \frac{\Gamma (s-\mu\delta)}{\Gamma (1-s+\mu\delta)} \pi^{-2s+2z}
{\mathcal H}(s-\mu\delta, \mu \delta;l,\alpha) \\
\ll & N(l)^{\epsilon} N(l_1)^{2\kappa
-\frac 12} X^{\kappa+\epsilon}
(1+|s|)
Y^{-1 +4 \kappa }.
\end{align*}
Applying the above in \eqref{Pexpression}, we see that the error introduced by
extending the sum over $\alpha$ to infinity is
\begin{align*}
&\ll N(l)^{\epsilon} N(l_1)^{2\kappa
-\frac 12} X^{\kappa+\epsilon}
Y^{-1+4\kappa}
\int\limits_{(\kappa+\frac 1{\log X})} |\Gamma_\delta(s)| (1+|s|)
\frac{|s|}{|s^2-\tau^2|}|ds| \ll N(l)^{\epsilon} N(l_1)^{2\kappa
-\frac 12} X^{\kappa+\epsilon}
Y^{-1+4\kappa}.
\end{align*}
  We then conclude that we have
\begin{align}
\label{PJ}
 {\mathcal P}_{\pm}(l)=\frac {\pi^2}{4} \cdot \sum_{\mu=\pm} {\widehat \Psi}(1+\mu\delta)
\leg{X}{2}^{\mu\delta} \zeta_K(1+2\mu\delta)
\frac{1}{2\pi i} \int\limits_{(\kappa+\frac 1{\log X})}
{\mathcal J}(s, \mu \delta;l) \frac{2s}{s^2-\tau^2} ds+O(N(l)^{\epsilon} N(l_1)^{2\kappa
-\frac 12} X^{\kappa+
\epsilon}Y^{-1+4\kappa}),
\end{align}
where
\begin{align*}
{\mathcal J}(s, v;l) :=& \Gamma_{v}(s) \leg{2^6}{\pi^2}^s \frac{\Gamma (s-v)}{\Gamma (1-s+v)} \pi^{-2s+2z} {\mathcal K}(s,v;l), \\
{\mathcal K}(s, v;l) :=& \frac{1}{2N(l)}\sum_{\substack{\alpha \equiv 1 \bmod (1+i)^3 \\   (\alpha,  l) =1}}
\frac{\mu_{[i]}(\alpha)}{N(\alpha)^{2-2s+2\mu\delta}}
{\mathcal H}(s-v, v;l,\alpha) .
\end{align*}

 A direct calculation using the expression for ${\mathcal H}$ given in \eqref{Hexp} yields
\begin{align*}
{\mathcal K}(s, v;l) =& -\frac 1{4N(l_1)^{\frac {1}{2}+v}} \frac {\phi_{[i]}(l)}{N(l)}
\prod_{\substack{\varpi \in G \\ \varpi|2l}} \biggl( 1-\frac{1}{N(\varpi)^{1+2v}} \biggr)
\prod_{\substack{\varpi \in G \\ \varpi |l\\ \varpi \nmid l_1}} \biggl( 1+ \frac{1}{N(\varpi)^{1+2v}}\biggr)
r_{s}(l_1) \\
&\times
\biggl(\frac{4^s+4^{-s}-2^{-1-2v}-2^{1+2v}}{4^s}\biggr)
\zeta_K(2s-2v) \zeta_K(2s+1+2v) \\
&\times
\prod_{\substack{\varpi \in G \\ \varpi \nmid 2l}} \biggl (1-\frac 1{N(\varpi)}\biggr)\biggl( 1-
\frac{1}{N(\varpi)^{1+2v}}\biggr)\biggl(1+\frac 1{N(\varpi)} +\frac1{N(\varpi)^{1+2v}}
+ \frac{1}{N(\varpi)^{3+4v}}-\frac{(N(\varpi)^{2s}+N(\varpi)^{-2s})}{N(\varpi)^{2+2v}}
\biggr).
\end{align*}

Using this together with the
functional equation for $\zeta_K(s)$ given in \eqref{fcneqnforzeta}, and the identity $\Gamma(z) \Gamma(z+\frac 12)= \pi^{\frac12} 2^{1-2z} \Gamma(2z)$,  we obtain the following identity
\begin{align}
\label{Jindentity}
\zeta_K(1+2v) {\mathcal J}(s,v;l)
=& \frac 1{\pi} \cdot \frac{4r_{s}(l_1)}{3\zeta_K(2)\sqrt{N(l_1)}}\leg{2^6}{\pi^2 N(l_1)}^{v}
\Gamma_s(v) Z(1+2v;s) \eta_s(1+2v;l).
\end{align}
 As it is easy to see that the left side above is invariant under $s\to -s$, we deduce that
${\mathcal J}(s,v;l) ={\mathcal J}(-s,v;l)$.

 We now evaluate the integral in \eqref{PJ} for $\mu =\pm$ by moving the line of integration to $\Re(s) = - \kappa-\frac 1{\log X}
$ to encounter simple poles at $s=\pm \delta, \pm
\tau$ in the process.  It follows that
\begin{align*}
\frac{1}{2\pi i} \int\limits_{(\kappa+\frac 1{\log X})}
{\mathcal J}(s, \mu \delta;l) \frac{2s}{s^2-\tau^2} ds=&  \mathop{\text{Res}}_{s=\pm\delta, \pm\tau} {\mathcal J}(s, \mu \delta;l)
\frac{2s}{s^2-\tau^2}
+ \frac{1}{2\pi i} \int\limits_{(-\kappa-\frac 1{\log X})}
{\mathcal J}(s, \mu \delta;l) \frac{2s}{s^2-\tau^2} ds \\
=& \mathop {\text{Res}}_{s=\pm\delta, \pm\tau}{\mathcal J}(s, \mu \delta;l)
\frac{2s}{s^2-\tau^2}
- \frac{1}{2\pi i} \int\limits_{(\kappa+\frac 1{\log X})}
{\mathcal J}(s, \mu \delta;l) \frac{2s}{s^2-\tau^2} ds,
\end{align*}
 where the last integral above follows from the previous one upon a change of variable $s \rightarrow -s$ while noticing that ${\mathcal J}(s, \mu \delta;l)
= {\mathcal J}(-s, \mu \delta;l)$. Thus we deduce that
$$
\frac{1}{2\pi i} \int\limits_{(\kappa+\frac 1{\log X})}
{\mathcal J}(s, \mu \delta;l) \frac{2s}{s^2-\tau^2} ds=\frac{1}{2} \biggl( \mathop{\text{Res}}_{s=\pm\delta, \pm \tau}
{\mathcal J}(s, \mu \delta;l)
\frac{2s}{s^2-\tau^2} \biggr)
= \mathop{\text{Res}}_{s=\mu\delta} {\mathcal J}(s, \mu \delta;l)
\frac{2s}{s^2-\tau^2} + {\mathcal J}(\tau, \mu \delta;l).
$$

 Applying the above in \eqref{PJ}, we obtain that
\begin{align}
\label{P+-}
\begin{split}
{\mathcal P}_{+}(l) ={\mathcal P}_{-}(l) =& \frac {\pi^2}{4} \cdot \sum_{\mu=\pm } {\widehat \Psi}(1+\mu\delta)
\leg{X}{2}^{\mu\delta} \zeta_K(1+2\mu\delta) \left(
 \mathop{\text{Res}}_{s=\mu\delta} {\mathcal J}(s, \mu \delta;l)
\frac{2s}{s^2-\tau^2} + {\mathcal J}(\tau, \mu \delta;l)
\right) \\
&
+ O\Big(\frac{N(l)^\epsilon X^{\kappa +\epsilon}
 N(l_1)^{2\kappa -\frac 12}}{Y^{1-4\kappa}}
\Big).
\end{split}
\end{align}

\subsection{Estimation of
${\mathcal R}(l)$}

 In this section, we estimate ${\mathcal R}(l)$ given in \eqref{R} on average by deriving the bound given in \eqref{Rbound}.  We denote $\beta_l = \frac{\overline{{\mathcal R}(l)}}{|{\mathcal R}(l)|}$ if ${\mathcal R}(l)
\neq 0$, and $\beta_l =1$ otherwise. We deduce from \eqref{R} that
\begin{align*}
\sum_{L \leq N(l) \leq 2L-1} |{\mathcal R}(l)| =\sum_{L \leq N(l) \leq 2L-1}\beta_l {\mathcal R}(l) \ll& \sum_{\substack{\alpha \equiv 1 \bmod (1+i)^3 \\  N(\alpha) \le Y}} \frac{1}{N(\alpha)^2}
 \int\limits_{(-\frac 12+|\Re(
\delta)|+\epsilon)}
\sum_{\substack{ k \in \mathcal O_K \\ k\neq 0 }} U_{\delta}(\alpha,k,w)
|dw|,
\end{align*}
  where
\begin{equation*}
U_{\delta}(\alpha,k,w) = |L(1+w+\delta,\chi_{ik_1})L(1+w-\delta,\chi_{ik_1})| \biggl|\sum_{\substack{L \leq N(l) \leq 2L-1 \\(l,2\alpha)=1}}
\frac{\beta_l}{N(l)}
{\mathcal G}_\delta(1+w;k,l,\alpha) h\biggl(\frac{N(k)X}{2N(\alpha^2l)},w\biggr)\biggr|.
\end{equation*}

  Recall the definition of $k_2$ in \eqref{eq: defn of k1 and k2}. We apply the Cauchy-Schwarz inequality to see that for any integer $K \geq 1$,
\begin{align*}
\begin{split}
  \sum_{\substack{ K\leq N(k)< 2K }}U_{\delta}(\alpha,k,w)
\ll & \Big(\sum_{N(k)=K}^{2K-1} N(k_2)
|L(1+w+\delta,\chi_{ik_1})L(1+w-\delta,\chi_{ik_1})|^2
\Big)^{\frac 12} \\
& \times \biggl(\sum_{N(k)=K}^{2K-1} \frac{1}{N(k_2)}
\biggl|\sum_{\substack{ N(l)=L\\
(l,2\alpha)=1}}^{2L-1}
\frac{\beta_l}{N(l)} {\mathcal G}_\delta(1+w;k,l,\alpha)
h\biggl(\frac{N(k)X}{2N(\alpha^2 l)},w
\biggr)\biggr|^2\ \biggr)^{\frac 12}.
\end{split}
\end{align*}

 Using the Cauchy-Schwarz inequality again together with Lemma \ref{lem:2.3}, we see that
$$
\Big(\sum_{N(k)=K}^{2K-1} N(k_2)
|L(1+w+\delta,\chi_{ik_1})L(1+w-\delta,\chi_{ik_1})|^2
\Big)^{\frac 12}
\ll (K(1+|w|^2))^{\frac 12+\epsilon}.
$$

  This implies that
\begin{align}\label{eq:using moment estimates of Heath Brown}
\begin{split}
& \sum_{\substack{ K\leq N(k)<2K }}U(\alpha,k,w)
\ll  (K(1+|w|^2))^{\frac 12+\epsilon} \biggl(\sum_{N(k)=K}^{2K-1} \frac{1}{N(k_2)}
\biggl|\sum_{\substack{ N(l)=L\\
(l,2\alpha)=1}}^{2L-1}
\frac{\beta_l}{N(l)} {\mathcal G}_\delta(1+w;k,l,\alpha)
h\biggl(\frac{N(k)X}{2N(\alpha^2 l)},w
\biggr)\biggr|^2\ \biggr)^{\frac 12}.
\end{split}
\end{align}

   Now, applying the first bound of Lemma~\ref{lem: version of Lemma 5.4 of Sound} as well as the estimation given in \eqref{gsest} for $ \Psi(1+w)$, we see that we may restrict the sum of the right side of \eqref{eq:using moment estimates of Heath Brown} to $K=2^j \leq N(\alpha)^2 L(1+|w|^2)(\log X)^4$. We then apply the second bound in Lemma~\ref{lem: version of Lemma 5.4 of Sound} to \eqref{eq:using moment estimates of Heath Brown} to see that
\begin{align*}
\begin{split}
 \sum_{\substack{ K\leq N(k)<2K }}U(\alpha,k,w)
 \ll &  (K(1+|w|^2))^{\frac 12+\epsilon} \times ((1+|w|)N(\alpha) LX)^{\varepsilon}|\widehat{\Phi}(1+w)|  \Big(\frac{N(\alpha)^{2} L(1+|w|^2)}{K}\Big)^{|\Re(\tau)|-|\Re(
\delta)|}  \\
& \times \frac{N(\alpha)L^{1/2}}
{K^{1/2} X^{1/2-|\Re(\delta)|}}(K^{1/2}+L^{1/2})\\
 \ll &  (1+|w|^2)^{\frac 12+\epsilon} \times (N(\alpha) LX)^{\varepsilon}|\widehat{\Phi}(1+w)|  \Big(\frac{N(\alpha)^{2} L(1+|w|^2)}{K}\Big)^{|\Re(\tau)|-|\Re(
\delta)|}  \\
& \times \frac{N(\alpha)L^{1/2}}
{X^{1/2-|\Re(\delta)|}}(K^{1/2}+L^{1/2}).
\end{split}
\end{align*}
 Summing over $K=2^j, K\leq N(\alpha)^2 L(1+|w|^2)(\log X)^4$, we deduce from the above that
\begin{equation*}
\sum_{\substack{ k \in \mathcal O_K \\ k \neq 0 }}U(\alpha,k,w) \ll  (1+|w|^2)^{1+\epsilon}(N(\alpha) L)^{\varepsilon}|\widehat{\Phi}(1+w)|\Big ( \frac {N(\alpha)^{2} L}{X^{\frac{1}{2}-|\Re(\delta)|-\varepsilon}}+ \frac {N(\alpha)^{1+2\kappa} L^{1+\kappa}}{X^{\frac{1}{2}-|\Re(\delta)|-\varepsilon}} \Big ).
\end{equation*}

  It follows that
\begin{align*}
\sum_{l=L}^{2L-1} |{\mathcal R}(l)| \ll &
\frac{L^{1+\epsilon} }{X^{\frac 12-|\Re(\delta)| -\epsilon}}
\sum_{N(\alpha) \le Y}
N(\alpha)^{\epsilon}(1+N(\alpha)^{-1+2\kappa}L^{\kappa}) \int\limits_{(-\frac 12+|\Re(\delta)|+\epsilon)} |{\widehat \Psi}(1+w)|
(1+|w|^2)^{1+\epsilon} |dw|.
\end{align*}
 Now, the bound given in \eqref{Rbound} follows from this by using \eqref{gsest} for $\Psi(1+w)$ with $n= 3$ when $|w| \leq \Psi_{(4)}/\Psi_{(3)}$ and $n = 4$ otherwise to evaluate the integral above.

\subsection{Conclusion}

 Combining \eqref{SMdecomp}, \eqref{Pl}, \eqref{P+-} and our result for the average size of ${\mathcal R}(l)$ given in the previous section, we see that in order to prove Proposition \ref{Proposition 2.3}, it remains to simply the expression ${\mathcal P}(l) + {\mathcal P}_{+}(l)+ {\mathcal P}_{-}(l)$ given in \eqref{Pl} and \eqref{P+-}. We now employ the identity given in \eqref{Jindentity} to see that the contribution from the poles at $\mu\delta$ to \eqref{Pl} and \eqref{P+-} cancel precisely each other, while the contribution from the
poles at $\pm \tau$ in both these expressions gives rise to the  main term on the right side of \eqref{SMasymp},
This completes the proof of Proposition \ref{Proposition 2.3}.

\section{Proof of Theorem \ref{thm: nonvanishing}}

  We now proceed to complete our proof of Theorem \ref{thm: nonvanishing}. We begin by evaluating ${\mathcal W}(\delta_1, \Phi)$ when $\delta_1$ is near $0$.

\subsection{Proof of Proposition \ref{Proposition 2.4}}

  We apply Lemma \ref{Lemma 3.2} together with the definition of $\xi(s,\chi_{(1+i)^5d})$ given in \eqref{xidef} to see that, by setting $\Phi_{-\tau}(t) = t^{-\tau} \Phi(t)$,
\begin{align}
\label{SW1}
& {\mathcal S}(1; \Phi) {\mathcal W}(\delta_1, \Phi)
= \frac{(2^5 X/\pi^2 )^{-\tau}}{\Gamma_\delta(\tau)}
{\mathcal S}(A_{\delta,\tau}(d)|M(\tfrac 12+\delta_1,d)|^2; \Phi_{-\tau}).
\end{align}

 We further set $Y=X^{4\kappa}$ to rewrite the above as
\begin{align}
\label{SW2}
{\mathcal S}(1; \Phi) {\mathcal W}(\delta_1, \Phi)=& \frac{(2^5 X/\pi^2 )^{-\tau}}{\Gamma_\delta(\tau)}
\Big\{
{\mathcal S}_M
(A_{\delta,\tau}(d)|M(\tfrac 12+\delta_1,d)|^2; \Phi_{-\tau})
+O({\mathcal S}_R
(A_{\delta,\tau}(d)|M(\tfrac 12+\delta_1,d)|^2; \Phi_{-\tau}))
\Big\} \\
=& \frac{(2^5 X/\pi^2 )^{-\tau}}{\Gamma_\delta(\tau)}
{\mathcal S}_M
(A_{\delta,\tau}(d)|M(\tfrac 12+\delta_1,d)|^2; \Phi_{-\tau})
+ O\Big(X^{-\kappa+\varepsilon}\Big),
\end{align}
 where the last estimation above follows from Proposition \ref{Proposition 2.2}.

Note that we have
\begin{align}
\label{SW3}
 & {\mathcal S}_M(A_{\delta,\tau}(d)|M(\tfrac 12+\delta_1,d)|^2; \Phi_{-\tau}) \\
= &
\sum_{\substack{l \equiv 1 \bmod (1+i)^3}} \Big(\sum_{\substack{r,s \equiv 1 \bmod (1+i)^3 \\ rs=l}}
\frac{\lam(r) \lam(s)}{N(r)^{\frac 12+\delta_1}N(s)^{\frac 12+\delta_2} }
\Big) {\mathcal S}_M \Big(A_{\delta,\tau}(d)\leg{(1+i)^5 d}{l};\Phi_{-\tau}\Big),
\end{align}
 We now apply Proposition \ref{Proposition 2.3} to evaluate
${\mathcal S}_M(A_{\delta,\tau}(d)|M(\tfrac 12+\delta_1,d)|^2; \Phi_{-\tau})$. Using the estimations that $|\lam(n)|\ll N(n)^\epsilon$, $r_\delta(n)\ll N(n)^{\epsilon}$, $d_{[i]}(n) \ll N(n)^{\epsilon}$, and $\tau \ge -1/\varepsilon \log X$, we deduce that
various remainder terms in Proposition \ref{Proposition 2.3} contribute
\begin{align}
\label{SW4}
&\ll \sum_{N(l) \le M^2} \frac{N(l)^{\varepsilon}}{N(l)^{\frac 12+\tau}}
\Big( \frac{X^{\varepsilon}}{(XN(l_1))^{\frac 14}}
+\frac{N(l)^\varepsilon X^{\kappa +\varepsilon}
 N(l_1)^{2\kappa-\frac 12}}{Y^{1-4\kappa}} + |{\mathcal R}(l)|\Big) \ll X^{-\kappa +\varepsilon} \Phi_{(3)} \Phi_{(4)}^{\epsilon}.
\end{align}

We obtain from \eqref{SW1}-\eqref{SW4} that
\begin{align}
\label{WintermofM}
{\mathcal S}(1; \Phi){\mathcal W}(\delta_1, \Phi)
= \frac{(2^5X/\pi^2)^{-\tau}}{\Gamma_\delta(\tau)}
\sum_{\substack{l \equiv 1 \bmod (1+i)^3}} \Big(\sum_{\substack{r,s \equiv 1 \bmod (1+i)^3 \\ rs=l}}
\frac{\lam(r) \lam(s)}{N(r)^{\frac 12+\delta_1}N(s)^{\frac 12+\delta_2} }
\Big) {\mathcal M}(l)
 + O(X^{-\kappa -\epsilon} \Phi_{(3)} \Phi_{(4)}^{\epsilon}),
\end{align}
where ${\mathcal M}(l) = {\mathcal M}_1(l)+{\mathcal M}_2(l)$ with
\begin{align*}
{\mathcal M}_1(l) =& \frac{2\pi  }{3\zeta_K(2)\sqrt{N(l_1)}}
\sum_{\mu=\pm}r_\delta(l_1)\Gamma_\delta(\mu\tau)
\leg {2^5X}{N(l_1) \pi^2}^{\mu\tau} {\widehat \Phi}(1+\mu\tau-\tau) Z(1+2\mu\tau;\delta)
\eta_{\delta}(1+2\mu\tau;l), \\
{\mathcal M}_2(l)
=&  \frac{2\pi  }{3\zeta_K(2)\sqrt{N(l_1)}}
\sum_{\mu=\pm} r_{\tau}(l_1) \Gamma_\tau(\mu\delta) \leg{2^5X}{N(l_1) \pi^2}^{\mu\delta}
{\widehat \Phi}(1+\mu\delta-\tau) Z(1+2\mu\delta;\tau) \eta_{\tau}(1+2\mu\delta;l).
\end{align*}

 By taking ${\mathcal C}$ to be the closed contour described in the paragraph above Lemma \ref{Lemma 3.17}, we can evaluate the contribution of ${\mathcal M}_1(l)$ to \eqref{WintermofM} by
\begin{align}
\label{M1innersum}
\begin{split}
&\frac{1}{2\pi i} \int\limits_{{\mathcal C}}
\frac{2\pi {\widehat \Phi}(1+w-\tau)}{3\zeta_K(2)\Gamma_\delta(\tau)}
\leg{2^5X}{\pi^2}^{w-\tau}
Z(1+2w;\delta) \Gamma_\delta(w) \frac{2w}{w^2-\tau^2}
\\
& \times \biggl\{ \sum_{\substack{l \equiv 1 \bmod (1+i)^3}}\frac{r_\delta(l_1)}{N(l_1)^{1/2+w}}
 \Big(\sum_{\substack{r,s \equiv 1 \bmod (1+i)^3 \\ rs=l}}
\frac{\lam(r) \lam(s)}{N(r)^{\frac 12+\delta_1}N(s)^{\frac 12+\delta_2} } \Big ) \eta_{\delta}(1+2w;l)\biggr\} dw.
\end{split}
\end{align}

Using the fact that $\lam$ is supported on square-free elements in $\mathcal O_K$, we now write $r=\alpha a$, $s=\alpha b$ with $\alpha$, $a$, $b$ being primary, square-free and $(a,b)=1$. This implies that $l=\alpha^2 a b$, $l_1=ab$, and $l_2 =\alpha$ so that the sum over $l$ in \eqref{M1innersum} can be written as
\begin{align}
\label{M1sumoverl}
\begin{split}
&\sum_{\alpha \equiv 1 \bmod (1+i)^3 } \sum_{\substack{ a, b \equiv 1 \bmod (1+i)^3 \\ (a,b)=1}}
\frac{r_{\delta}(ab)}{N(ab)^{\frac 12 +w}}
\frac{\lam(\alpha a)\lam(\alpha b)}{N(\alpha)^{1+\delta_1+\delta_2}
N(a)^{\frac 12+\delta_1} N(b)^{\frac 12+\delta_2} }
\eta_{\delta}(1+2w;\alpha^2 ab)\\
=& \sum_{\alpha \equiv 1 \bmod (1+i)^3 } \frac{1}{N(\alpha)^{1+2\tau}}
\sum_{\substack{ a, b \equiv 1 \bmod (1+i)^3 \\ (a,b)=1}} \frac{r_{\delta}(a) r_{\delta}(b)}
{N(a)^{1+\delta_1+w}N(b)^{1+\delta_2+w}} \lam(\alpha a) \lam(\alpha b)
\eta_{\delta}(1+2w;\alpha^2 ab).
\end{split}
\end{align}

 We then apply the relation
$$
\eta_{\delta}(1+2w;\alpha^2 ab) =
\frac{\eta_{\delta}(1+2w;1)}{h_{w}(\alpha) h_{w}(a) h_{w}(b)}
\prod_{\substack{\varpi \in G\\ \varpi|\alpha}} \Big( 1+\frac{1}{N(\varpi)^{1+2w}}\Big)
$$
to recast the expressions given in \eqref{M1sumoverl} as
$$
\eta_{\delta}(1+2w;1) \sum_{\alpha \equiv 1 \bmod (1+i)^3 }
\frac{1}{N(\alpha)^{1+2\tau} h_{w}(\alpha)} \prod_{\substack{\varpi \in G \\ \varpi|\alpha}}\Big(1
+\frac{1}{N(\varpi)^{1+2w}}\Big) \sum_{\substack{  a,b \equiv 1 \bmod (1+i)^3\\ (a,b)=1}}
\frac{r_{\delta}(a) \lam(\alpha a)}{N(a)^{1+\delta_1+w}h_{w}(a)}
\frac{r_{\delta}(b)\lam(\alpha b)}{N(b)^{1+\delta_2+w}h_{w}(b)}.
$$
 Next, we use the M\"obius function to detect the condition $(a,b)=1$ in the above expression to see that it equals to
\begin{align}
\label{M1sumoverlinab}
\begin{split}
& \eta_{\delta}(1+2w;1)\sum_{\alpha \equiv 1 \bmod (1+i)^3 }
\frac{1}{N(\alpha)^{1+2\tau} h_{w}(\alpha)} \prod_{\substack{\varpi \in G \\ \varpi|\alpha}}\Big(1
+\frac{1}{N(\varpi)^{1+2w}}\Big)
\sum_{\substack{ \beta \equiv 1 \bmod (1+i)^3 }} \frac{r_{\delta}(\beta)^2 \mu_{[i]}(\beta) }{N(\beta)^{2+2\tau
+2w} h_{w}(\beta)^2 } \\
&\times \sum_{\substack{  a,b \equiv 1 \bmod (1+i)^3}}
\frac{r_{\delta}(a) \lam(a\alpha \beta) }{N(a)^{1+\delta_1+w} h_{w}(a)}
\frac{r_{\delta}(b)\lam(b\alpha\beta)}{N(b)^{1+\delta_2+w}h_{w}(b)}.
\end{split}
\end{align}

 We further group terms according to $\gamma =\alpha \beta$ to see
that \eqref{M1sumoverlinab} becomes
\begin{align}
\label{M1sumoverlinHab}
\begin{split}
\eta_{\delta}(1+2w;1) \sum_{\gamma \equiv 1 \bmod (1+i)^3} \frac{H_{w}(\gamma)}
{N(\gamma)^{1+2\tau} h_{w}(\gamma)} \Big( \sum_{\substack{  a \equiv 1 \bmod (1+i)^3}} \frac{r_{\delta}(a)
\lam(a\gamma)}{N(a)^{1+\delta_1+w}h_{w}(a)}\Big)
\Big( \sum_{\substack{  b \equiv 1 \bmod (1+i)^3}}\frac{r_{\delta}(b) \lam(b\gamma)}{N(b)^{1+\delta_2+w}
h_{w}(b)}\Big).
\end{split}
\end{align}

We now apply Lemma \ref{Lemma 3.16} to evaluate
the sum over $a$ given in \eqref{M1sumoverlinHab} by first considering the case $N(\gamma) \le M^{1-b}$.
We set $u=\delta_1+w$, $v=\delta$, $g(n)=1/h_w(n)$ and we denote $G(s,\gamma;u,v)$ by $G_w(s,\gamma;u,v)$ in Lemma \ref{Lemma 3.16} to see that by applying it with $y=M$, $R(x)=P(x)$ and then with $y=M^{1-b}$, $R(x)= (1-P(b+x(1-b)))$, the sum of these two applications yields
\begin{align}
\label{sumovera}
\begin{split}
& \sum_{\substack{ N(a) \le M/N(\gamma) \\ a \equiv 1 \bmod (1+i)^3 }}
\frac{r_{\delta}(a) \mu_{[i]}(a\gamma)}{N(a)^{1+\delta_1+w }
h_w(a)} Q\Big(\frac{\log(M/N(a\gamma))}{\log M}\Big) \\
=& \frac{\mu_{[i]}(\gamma)G_w(1,\gamma;\delta_1+w,\delta)}{\zeta_K(1+\delta_1+w+\delta)
\zeta_K(1+\delta_1+w-\delta)}
  + O\Big( \frac{E(\gamma)}{\log^2 M} \leg{M^{1-b}}
{N(\gamma)}^{-\tau-\Re(w)} \exp(-A_0 \sqrt{\log M^{1-b}/N(\gamma)}\Big).
\end{split}
\end{align}
  Here one checks that the sum of the $k \geq 1$ terms of the above two applications of Lemma \ref{Lemma 3.16} cancel each other.
Thus the main term above comes from the sum of the $k=0$ terms only. Note also that the main term above is $\ll |\delta_1|^2$ due to the choice of the contour ${\mathcal C}$.

  Now, replacing $a$ by $b$ and $\delta_1$ by $\delta_2$ in \eqref{sumovera}, we see that a similar expression for the sum over $b$ in \eqref{M1sumoverlinHab} holds. It follows that the case $N(\gamma) \le M^{1-b}$ in
\eqref{M1sumoverlinHab} equals
\begin{align*}
\begin{split}
& \eta_\delta(1+2w;1) \sum_{\substack{ N(\gamma) \le M^{1-b} \\ \gamma \equiv 1 \bmod (1+i)^3
}}
\frac{H_w(\gamma)}{N(\gamma)^{1+2\tau}h_w(\gamma)} \frac{\mu^2_{[i]}(\gamma)
G_w(1,\gamma;\delta_1+w,\delta)G_w(1,\gamma;\delta_2+w,\delta)}
{\prod_{\mu=\pm} \zeta_K(1+\delta_1+w+\mu\delta)\zeta_K(1+\delta_2+w+\mu\delta)}
\\
& + O\Big( \sum_{\substack{ N(\gamma) \le M^{1-b} \\ \gamma \equiv 1 \bmod (1+i)^3
}} \frac{H_w(\gamma)}{N(\gamma)^{1+2\tau}h_w(\gamma)} E(\gamma)\frac{|\delta_1|^2 }{\log^2 M}
\leg{M^{1-b}}{N(\gamma)}^{-\tau-\Re(w)} \exp(-A_0 \sqrt{\log (M^{1-b}/N(\gamma))}
\Big) \\
=& \eta_\delta(1+2w;1)\sum_{\substack{ N(\gamma) \le M^{1-b} \\ \gamma \equiv 1 \bmod (1+i)^3
}}
\frac{H_w(\gamma)}{N(\gamma)^{1+2\tau}h_w(\gamma)}\frac{\mu^2_{[i]}(\gamma)
G_w(1,\gamma;\delta_1+w,\delta)G_w(1,\gamma;\delta_2+w,\delta)}
{\prod_{\mu=\pm} \zeta_K(1+\delta_1+w+\mu\delta)\zeta_K(1+\delta_2+w+\mu\delta)}
\\
&+ O\Big( \frac{|\delta_1|^2 }{\log^2 M}
M^{(1-b)(-\tau-\Re(w))} \Big).
\end{split}
\end{align*}

 We apply the last expression above in \eqref{M1innersum} to see that, based on our
choice of ${\mathcal C}$ and $M$, the contribution of the error term above is
\begin{align}
\label{errorM1gammasmall}
\begin{split}
O\Big( \log^2 X |\delta_1|^3 M^{-2\tau(1-b)}\Big).
\end{split}
\end{align}
 On the other hand, the main term coming from the contribution of the terms $N(\gamma) \le M^{1-b}$ to \eqref{M1innersum} equals to
\begin{align}
\label{maintermNgammasmall}
\begin{split}
\frac{1}{2\pi i} \int\limits_{{\mathcal C}}
&\frac{2\pi {\widehat \Phi}(1+w-\tau)}{3\zeta_K(2)\Gamma_\delta(\tau)}
\leg{2^5X}{\pi^2}^{w-\tau}
Z(1+2w;\delta) \Gamma_\delta(w) \frac{2w}{w^2-\tau^2}\eta_\delta(1+2w;1)
\\
& \times \sum_{\substack{ N(\gamma) \le M^{1-b} \\ \gamma \equiv 1 \bmod (1+i)^3
}} \frac{H_w(\gamma)}{N(\gamma)^{1+2\tau}h_w(\gamma)}\frac{\mu^2_{[i]}(\gamma)
G_w(1,\gamma;\delta_1+w,\delta)G_w(1,\gamma;\delta_2+w,\delta)}
{\prod_{\mu=\pm} \zeta_K(1+\delta_1+w+\mu\delta)\zeta_K(1+\delta_2+w+\mu\delta)}
dw.
\end{split}
\end{align}

Note that as $\prod_{\mu =\pm}
\zeta_K(1+\delta_1+w+\mu\delta)^{-1}\zeta_K(1+\delta_2+w
+\mu\delta)^{-1}$ vanishes at $w=-\tau$,  the integrand in \eqref{maintermNgammasmall} has only a simple pole at $w=\tau$ inside ${\mathcal C}$. It then follows from Cauchy's theorem that the expression in \eqref{maintermNgammasmall} equals
\begin{align}
\label{maintermNgammasmallsimplified}
\begin{split}
\frac{2\pi {\widehat \Phi}(1)}{3\zeta_K(2)} \frac{\eta_\delta(1+2\tau;1)}
{\zeta_K(1+2\tau)}
\sum_{\substack{ N(\gamma) \le M^{1-b} \\ \gamma \equiv 1 \bmod (1+i)^3
}} \frac{\mu^2_{[i]}(\gamma)H_\tau(\gamma)}{N(\gamma)^{1+2\tau}h_{\tau}(\gamma)}
G_\tau(1,\gamma;\delta_1+\tau,\delta)G_\tau(1,\gamma;\delta_2+\tau,\delta).
\end{split}
\end{align}

  We apply Lemma \ref{Lemma 3.17} to evaluate \eqref{maintermNgammasmallsimplified} and then combine the result together with the
error term given in \eqref{errorM1gammasmall} to see that the contribution to ${\mathcal S}(1; \Phi){\mathcal W}(\delta_1, \Phi)$ from the part of ${\mathcal M}_1(l)$
restricting to the case $N(\gamma) \le M^{1-b}$ equals
\begin{align}
\label{M1gammasmall}
\frac{2\pi{\widehat \Phi}(1)}{3\zeta_K(2)} ( 1 - M^{-2\tau(1-b)})
+O(\log^2 X |\delta_1|^3 M^{-2\tau(1-b)}).
\end{align}

We now evaluate \eqref{M1sumoverlinHab} for the case $N(\gamma) > M^{1-b}$ by applying Lemma \ref{Lemma 3.16} with $u$, $v$, $g(n)$, $G_w(s,\gamma;u,v)$ as before,
and with $R(x)=P(x)$, $y=M$ this time. We deduce then that for any odd $M^{1-b} <
N(\gamma) <M$,
\begin{align}
\label{sumofgammabig}
\begin{split}
\sum_{\substack{ N(a) \le M/N(\gamma) \\ a \equiv 1 \bmod (1+i)^3 }}
 &\frac{r_{\delta}(a) \mu_{[i]}(a\gamma)}{N(a)^{1+\delta_1+w }
h_w(a)} Q\Big(\frac{\log(M/N(a\gamma))}{\log M}\Big)
= O\Big( \frac{E(\gamma)}{\log^2 M }
\leg{M}{N(\gamma)}^{-\tau -\Re(w)} \exp(-A_0\sqrt{\log (M/N(\gamma)})\Big)
\\
&+\mathop{\text{Res}}_{s=0} \frac{\mu_{[i]}(\gamma) G_w(1+s,\gamma
;\delta_1+w,\delta)}{
s\zeta_K(1+s+\delta_1+w+\delta)\zeta_K(1+s+\delta_1+w-\delta)}
\sum_{k=0}^{\infty} \frac{1}{(s\log M)^k} Q^{(k)} \Big(\frac{\log (M/N(\gamma))}
{\log M}\Big).
\end{split}
\end{align}

  To evaluate the residue above, we write the Taylor expansion of $G_w(1+s,\gamma;u,v)/(\zeta_K(1+s+\delta_1+w
+\delta)\zeta_K(1+s+\delta_1+w-\delta))$ as
$a_0+a_1 s+ a_2 s^2 +\ldots$ to see that $a_0= \frac {4^2}{\pi^2} \cdot (\delta_1+w+\delta)
(\delta_1+w-\delta)G_w(1,\gamma;\delta_1+w,\delta)
 + O((|\delta_1|+|w|)^3)$, $a_1=2 \cdot \frac {4^2}{\pi^2} (\delta_1+w) G_w(1,\gamma;\delta_1+w,\delta)
+ O((|\delta_1|+|w|)^2)$, $a_2= \frac {4^2}{\pi^2} \cdot G_w(1,\gamma;\delta_1+w,\delta)
+O(|\delta_1|+|w|)$, and that $a_n \ll_n 1$
for $n \ge 3$. It follows from this that the residue term in \eqref{sumofgammabig} equals
\begin{align}
\label{sumoveragammabig}
\begin{split}
\frac {4^2}{\pi^2} \cdot \mu_{[i]}(\gamma)G_w(1,\gamma;\delta_1+w,\delta)
\Big(  (\delta_1+w+\delta)(\delta_1+w-\delta)
&Q\Big(\frac{\log (M/N(\gamma))}{\log M}\Big) + 2 \frac{\delta_1+w}{\log M}
Q^{\prime}\Big(\frac{\log (M/N(\gamma))}{\log M}\Big)
\\
&+\frac{1}{\log^2 M}Q^{\prime \prime} \Big(\frac{\log (M/N(\gamma))}{\log M}\Big)
\Big) + O(|\delta_1|^3).
\end{split}
\end{align}

 Again, replacing $a$ by $b$ and $\delta_1$ by $\delta_2$ in \eqref{sumoveragammabig} yields an expression for the sum over $b$ in \eqref{M1sumoverlinHab} when $M^{1-b} <
N(\gamma) <M$.  We can thus apply these expressions to
evaluate \eqref{M1sumoverlinHab} for the case $N(\gamma) > M^{1-b}$. By doing so, we see that the contribution of the remainder terms is
\begin{align}
\label{errorsumoveragammabig}
\begin{split}
&\ll \sum_{M^{1-b} \le N(\gamma) \le M}
\frac{1}{N(\gamma)^{1+2\tau}} \Big( \frac{E(\gamma)}{\log^2 M} |\delta_1|^2
\Big(\frac{M}{N(\gamma)}\Big)^{-\tau-\Re(w)}
\exp(-A_0 \sqrt{\log (M/N(\gamma))}
) + |\delta_1|^5 \Big)
\\
&\ll \frac{|\delta_1|^2}{\log^2 M} M^{-\tau -\Re(w) } + M^{-2\tau(1-b)}
|\delta_1|^5 \log M.
\end{split}
\end{align}

 On the other hand, by writing $Q^{(j)}_\gamma$ for $Q^{(j)}(\log (M/N(\gamma))/\log M)$, we obtain that the main term is
\begin{align*}
\begin{split}
& (\frac {4}{\pi})^4 \cdot \eta_\delta(1+2w;1) \sum_{\substack{ M^{1-b} < N(\gamma) \le M \\
\gamma \equiv 1 \bmod (1+i)^3 }} \frac{\mu^2_{[i]}(\gamma) H_w(\gamma)}{N(\gamma)^{1+2\tau}
h_w(\gamma)} G_w(1,\gamma;\delta_1+w,\delta)G_w(1,\gamma;\delta_2+w,\gamma)
\\
&\times \Big( (\delta_1+w+\delta)(\delta_1+w-\delta)
Q_\gamma + 2 \frac{\delta_1+w}{\log M}
Q^{\prime}_\gamma
+\frac{1}{\log^2 M}Q^{\prime \prime}_\gamma\Big)\\
&\times
\Big((\delta_2+w+\delta)(\delta_2+w-\delta)
Q_\gamma + 2 \frac{\delta_2+w}{\log M}
Q^{\prime}_\gamma
+\frac{1}{\log^2 M}Q^{\prime \prime}_\gamma\Big).
\end{split}
\end{align*}

 We apply Lemma \ref{Lemma 3.17} to compute the expression above. By a suitable change of variables, we see that it equals
\begin{align}
\label{mainsumoveragammabig}
\begin{split}
N(w) + O(M^{-2\tau(1-b)} |\delta_1|^5
\log M),
\end{split}
\end{align}
  where
\begin{align*}
\begin{split}
 N(w)=& (\frac {4}{\pi})^3 \cdot \log M \int_0^b M^{-2\tau(1-x)}
\Big( (\delta_1+w+\delta)(\delta_1+w-\delta)
Q(x) + 2 \frac{\delta_1+w}{\log M}
Q^{\prime}(x)
+\frac{Q^{\prime \prime}(x)}{\log^2 M}\Big)
\\
&\times \Big((\delta_2+w+\delta)(\delta_2+w-\delta)
Q(x) + 2 \frac{\delta_2+w}{\log M}
Q^{\prime}(x)
+\frac{Q^{\prime\prime}(x)}{\log^2 M}\Big) dx.
\end{split}
\end{align*}

  We now apply \eqref{errorsumoveragammabig} and \eqref{mainsumoveragammabig} to evaluate the expression in \eqref{M1innersum} when $N(\gamma) >M^{1-b}$.  Based on our choices for $M$ and ${\mathcal C}$, we see that the error terms in \eqref{errorsumoveragammabig} and \eqref{mainsumoveragammabig} contribute
\begin{align}
\label{errortermofM1contribution}
O\Big( |\delta_1|^3  M^{-2\tau} \log^2 X+ |\delta_1|^6
 M^{-2\tau(1-b) }\log^5 X\Big).
\end{align}
  Moreover, we see that the main term in \eqref{M1innersum} equals
\begin{align}
\label{maintermofM1contribution}
\frac{1}{2\pi i} \int\limits_{\mathcal C} \frac{2\pi {\widehat \Phi}(1+w-\tau)}{3\zeta_K(2)
\Gamma_\delta(\tau)}
\leg{2^5X}{\pi^2}^{w-\tau} Z(1+2w;\delta) \Gamma_\delta(w) N(w) \frac{2w}
{w^2-\tau^2}dw.
\end{align}
Applying the relations
\begin{align*}
 {\widehat \Phi}(1+w-\tau)\Gamma_\delta(w)/\Gamma_\delta(\tau) =&
{\widehat \Phi}(1) +O(|\delta_1|), \\
2w Z(1+2w;\delta)=&(\frac {\pi}{4})^3 \cdot \frac{1}{4(w^2-\delta^2)} + O( |\delta_1| \log^2 X), \\
N(w) \ll & M^{-2\tau(1-b)}|\delta_1|^4 \log M,
\end{align*}
  we see that the expression in \eqref{maintermofM1contribution} equals
\begin{align}
\label{maintermofM1contributionsimplified}
\begin{split}
&  \frac{2\pi {\widehat \Phi}(1)}{3\zeta_K(2)} \frac{1}{2\pi i} \int\limits_{\mathcal C}
\leg{2^5X}{\pi^2}^{w-\tau} (\frac {\pi}{4})^3 N(w) \frac{1}{4(w^2-\delta^2)} \frac{1}{w^2-\tau^2}
dw +O(|\delta_1|^6 M^{-2\tau(1-b)}\log^5 X)\\
=& \frac{2 \pi {\widehat \Phi}(1)}{3\zeta_K(2)} \frac{1}{8\delta_1 \delta_2 \tau}
\Big((\frac {\pi}{4})^3 N(\tau) - \leg{2^5X}{\pi^2}^{-2\tau} (\frac {\pi}{4})^3 N(-\tau)\Big) +
O(|\delta_1|^6 M^{-2\tau(1-b)}\log^5 X).
\end{split}
\end{align}

  Now, using integration by parts together with the observations that $Q(0)=Q^{\prime}(0)=0$,
and $Q(b)=1$, $Q^{\prime}(b)=0$, we obtain after a little calculation that
\begin{align*}
\begin{split}
 (\frac {\pi}{4})^3 N(\tau) =& 8\delta_1\delta_2 \tau M^{-2\tau(1-b)}
+ \frac{4\delta_1 \delta_2}{\log M}
\int_0^b M^{-2\tau(1-x) } \Big|
Q^{\prime}(x) +\frac{Q^{\prime\prime}(x)}{2\delta_1 \log M}
\Big|^2
dx \\
(\frac {\pi}{4})^3 N(-\tau) =& \frac 4{\pi} \cdot  \frac{4\delta_1 \delta_2}{\log M}
\int_0^b M^{-2\tau(1-x) } \Big|
Q^{\prime}(x) +\frac{Q^{\prime\prime}(x)}{2\delta_1 \log M}
\Big|^2 dx.
\end{split}
\end{align*}

 The above expressions allow us to recast \eqref{maintermofM1contributionsimplified} as
\begin{align*}
\begin{split}
\frac{2\pi {\widehat \Phi}(1)}{3\zeta_K(2)} \Big( M^{-2\tau(1-b)}
+ \frac{1-(2^5X/\pi^2)^{-2\tau} }{2\tau\log M}
&\int_0^b M^{-2\tau(1-x) } \Big|
Q^{\prime}(x) +\frac{Q^{\prime\prime}(x)}{2\delta_1 \log M}
\Big|^2
dx \Big)
+O(M^{-2\tau(1-b)}
|\delta_1|^6 \log^5 X).
\end{split}
\end{align*}

 It follows from this and \eqref{errortermofM1contribution} that the contribution of ${\mathcal M}_1(l)$ to ${\mathcal S}(1; \Phi){\mathcal W}(\delta_1, \Phi)$ from the terms $M^{1-b}
< N(\gamma) \le M$ equals
\begin{align*}
\begin{split}
& \frac{2\pi{\widehat \Phi}(1)}{3\zeta_K(2)} \Big( M^{-2\tau(1-b)}
+ \frac{1-(2^5X/\pi^2)^{-2\tau} }{2\tau\log M}
\int_0^b M^{-2\tau(1-x) } \Big|
Q^{\prime}(x) +\frac{Q^{\prime\prime}(x)}{2\delta_1 \log M}
\Big|^2
dx \Big)
\\
+&O(|\delta_1|^3 M^{-2\tau} \log^2 X +
M^{-2\tau(1-b)} |\delta_1|^6 \log^5 X).
\end{split}
\end{align*}

 Combining the above with \eqref{M1gammasmall}, we conclude that the ${\mathcal M_1}(l)$
contribution to ${\mathcal S}(1; \Phi){\mathcal W}(\delta_1, \Phi)$  equals
\begin{align}
\label{M1contribution}
\begin{split}
& \frac{2\pi{\widehat \Phi}(1)}{3\zeta_K(2)} \Big( 1+
\frac{1-(2^5X/\pi^2)^{-2\tau} }{2\tau\log M}  \int_0^b M^{-2\tau(1-x) } \Big|
Q^{\prime}(x) +\frac{Q^{\prime\prime}(x)}{2\delta_1 \log M}
\Big|^2
dx \Big)+ O(M^{-2\tau(1-b)} |\delta_1|^6 \log^5 X).
\end{split}
\end{align}

  We obtain the ${\mathcal M}_2(l)$ contribution to \eqref{WintermofM} in
a similar way to see that it equals
\begin{align}
\label{M2contribution}
\begin{split}
&  -\frac{2\pi{\widehat \Phi}(1)}{3\zeta_K(2)} \leg{2^5X}{\pi^2}^{-\tau}
\frac{(2^5X/\pi^2)^{\delta}-(2^5X/\pi^2)^{-\delta}}{2\delta \log M}
\int_{0}^{b} M^{-2\tau(1-x) } \Big|
Q^{\prime}(x) +\frac{Q^{\prime\prime}(x)}{2\delta_1 \log M}
\Big|^2
dx\\
 +& O(X^{-\tau} M^{-2\tau(1-b)} |\delta_1|^6 \log^5 X).
\end{split}
\end{align}

 By applying \eqref{S1}, \eqref{M1contribution} and \eqref{M2contribution} in \eqref{WintermofM}, we deduce readily the statement of Proposition \ref{Proposition 2.4}.

\subsection{Completion of the proof}
  We are now able to complete our proof of Theorem \ref{thm: nonvanishing}. For this, we take our parameters exactly as those used by Conrey and Soundararajan in their proof of \cite[Theorem 1]{C&S}. Precisely, we take $\kappa=10^{-10}, M=X^{\frac 12
-5\kappa}$ in \eqref{mollifier} and
$S=\pi/(2(1-b))(1-20\kappa))$ with $b=0.64$. We further take $R=6.8$ and $P(x) =3 (x/b)^2 -2 (x/b)^3$.  Let $\sigma_0$ be given as in Proposition \ref{Proposition 2.1} and let $\Phi$ to be a smooth function supported
in $(1,2)$ satisfying $0 \leq \Phi(t) \leq 1$ for all $t$, $\Phi(t)=1$ for
$t\in (1+\epsilon,2-\epsilon)$, and $|\Phi^{(\nu)}(t)| \ll_{\nu, \varepsilon}
1$.    We apply Proposition \ref{Proposition 2.1} in \eqref{Nbound} to deduce that
\begin{align}
\label{6.30}
{\mathcal N}(X,\Phi) \le \frac{X{\mathcal S}(1;\Phi)}{8S \sinh (\frac{\pi R}{2S})}
(J_1(X;\Phi) + J_2(X;\Phi)) +o(X),
\end{align}
where $J_1$ and $J_2$ are given in \eqref{Jdef}.

 Now, Proposition \ref{Proposition 2.4}  implies that for real numbers $u, v$ satisfying
$\kappa \log X \ge |u + iv| \ge -1/\epsilon$ and $u \ge -1/\epsilon$, we have
$$
{\mathcal W}\Big(\frac{u+iv}{\log X},\Phi\Big)
= {\mathcal V}(u,v)
+ O\Big(
 M^{-2u(1-b)/\log X} \frac{(1+|u|+|v|)^6}{\log X}
\Big),
$$
where
$$
{\mathcal V}(u,v)= 1+ \frac{e^{-u}\log X}{2\log M} \Big( \frac{\sinh u}{u}
-\frac{\sin v}{v} \Big) \int_0^b M^{-2u(1-x)/\log X} \Big| Q^{\prime}(x)
+ \frac{Q^{\prime\prime}(x) \log X}{2 (x+iy)\log M} \Big|^2 dx.
$$
 The above expression implies that ${\mathcal V}(u,v) \ge 1$, from which
we deduce that
\begin{align}
\label{6.31}
J_1(X;\Phi) = \int_0^S \cos \leg {\pi t}{2S}
\log {\mathcal V}(-R,t) dt + O\Big( \frac{1}{\log X}\Big).
\end{align}
Moreover, based on Proposition \ref{Proposition 2.1} and our choice for $S$,  we can extend the integral in the definition of $J_2(X; \Phi)$ in \eqref{Jdef} to infinity with an negligible error. This way,  we obtain
that
\begin{align}
\label{6.32}
J_2(X; \Phi) = \int_0^{\infty} \sinh \leg {\pi u}{2S}
\log {\mathcal V}(u-R, S) du + o(1).
\end{align}
  We conclude from \eqref{6.30}-\eqref{6.32} that
\begin{align*}
{\mathcal N}(X,\Phi) &\le \frac{X{\mathcal S}(1;\Phi)}{8S \sinh \leg {\pi R}{2S} }
\Big( \int_0^S \cos \leg {\pi t}{2S}
\log {\mathcal V}(-R,t) dt +  \int_0^{\infty} \sinh \leg {\pi u}{2S}
\log {\mathcal V}(u-R, S) du\Big) + o(X).
\end{align*}
 As shown in \cite[p. 10]{C&S}, the above further implies that
${\mathcal N}(X,\Phi) \le 0.79 X {\mathcal S}(1;\Phi) + o(X)$. By summing over $X=x/2$, $x/4$, $\ldots$, we obtain the proof of Theorem \ref{thm: nonvanishing}.

\vspace*{.5cm}

\noindent{\bf Acknowledgments.} P. G. is supported in part by NSFC grant 11871082.

\bibliography{biblio}
\bibliographystyle{amsxport}

\vspace*{.5cm}

\end{document}